\newtheorem{theorem}{Theorem}
\newtheorem{proposition}{Proposition}
\newtheorem{lemma}{Lemma}
\newtheorem{assumption}{Assumption}
\newtheorem{definition}{Definition}
\DeclareMathOperator*{\argmin}{argmin}
\newcommand{\R}{\mathbb{R}}
\newcommand{\eqdef}{\stackrel{\text{def}}{=}}
\newcommand{\cO}{{\cal O}}
\definecolor{mydarkgreen}{RGB}{39,130,67}
\definecolor{mydarkred}{RGB}{192,47,25}
\newcommand{\cmark}{{\color{mydarkgreen}\ding{51}}}%
\newcommand{\xmark}{{\color{mydarkred} \ding{55}}}%
\newcommand{\norm}[1]{{\left \| #1 \right\|}_2}
\newcommand{\Lstandard}{{L_{\text{sc}}}}
\newcommand{\Lsemi}{{L_{\text{semi}}}}
\newcommand{\Lstrongly}{{L_{\text{str}}}}
\newcommand{\Lalg}{{L_{\text{est}}}}
\newcommand{\Lalt}{{L_{\text{alt}}}}
\newcommand{\h}{\nabla^2 f}
\newcommand{\g}{\nabla f}
\newcommand{\mI}{\mathbf I}
\newcommand{\normM}[2]{{\left \| #1 \right\|}_{#2}}
\newcommand{\normsM}[2]{{\left \| #1 \right\|}_{#2}^2}
\newcommand{\normMd}[2]{{\left \| #1 \right\|}_{#2}^*}
\newcommand{\normsMd}[2]{{\left \| #1 \right\|}_{#2}^{*2}}
\newcommand{\G}{G_k}
\newcommand{\gk}{g_k}
\newcommand{\hk}{h_k}
\newcommand{\err}{\beta}
\newcommand{\gn}{g_{k+1}} 
\newcommand{\gbox}{\colorbox{lightgray!50}}
\newcommand{\gboxeq}[1]{\text{\colorbox{lightgray!50} {$#1$}}}
\newcommand{\ain}{{\sf\color{red}AICN}}
\newcommand{\sumin}[2]{ \sum \limits_{#1=1}^{#2}}
\newcommand{\norms}[1]{\left \| #1 \right\|^2_2}
\newcommand{\xdiff}{x_{k+1} -x_k}
\newcommand{\ip}[2]{\left\langle #1, #2  \right \rangle}
\def\<#1,#2>{\left\langle #1,#2\right\rangle}
\newlist{myitemize}{itemize}{3}
\setlist[myitemize,1]{label=\textbullet,leftmargin=0in}
\newcommand{\ls}{\left(}
\newcommand{\rs}{\right)}
\newcommand{\lb}{\left\lbrace}
\newcommand{\rb}{\right\rbrace}
\newcommand{\la}{\left\langle}
\newcommand{\ra}{\right\rangle}
\newcommand{\bbE}{\mathbb{E}}
\author{%
    Slavom\'ir Hanzely\thanks{Equal contributions}\,
    \thanks{Mohamed bin Zayed University of Artificial Intelligence, United Arab Emirates},
    \thanks{King Abdullah University of Science and Technology, Thuwal, Saudi Arabia}\\
	\texttt{shanzely@gmail.com}
    \and
    Dmitry Kamzolov\footnotemark[1]\,\,\footnotemark[2]\\
    \texttt{kamzolov.opt@gmail.com}
    \and 
    Dmitry Pasechnyuk\footnotemark[2]\,\,\,\thanks{Moscow Institute of Physics and Technology, Russia}\,\,\,\thanks{ISP RAS Research Center for Trusted Artificial Intelligence, Russia}\\
    \texttt{dmivilensky1@gmail}
    \and 
    Alexander Gasnikov\footnotemark[4]\,\,\,\footnotemark[5]\,\,\,\thanks{National Research University Higher School of Economics, Russia}\\
    \texttt{gasnikov@yandex.ru}
    \and
    Peter Richt\'arik\footnotemark[3]\\
    \texttt{richtarik@gmail.com}
    \and
    Martin Tak\'a\v{c}\footnotemark[2]\\
    \texttt{takac.MT@gmail.com}
    }
    \date{}
    \title{\textbf{A Damped Newton Method Achieves Global $\cO\left(\frac 1 {k^2}\right)$  and Local Quadratic  Convergence Rate}}
\begin{document}
	\maketitle
	\begin{abstract}
		In this paper, we present the first stepsize schedule for Newton method resulting in fast global and local convergence guarantees. In particular, a) we prove an $\cO\left( \frac 1 {k^2} \right)$ global rate, which matches the state-of-the-art global rate of cubically regularized  Newton method of Polyak and Nesterov (2006) and of regularized Newton method of Mishchenko (2021) and Doikov and Nesterov (2021), b) we prove a local quadratic rate, which matches the best-known local rate of second-order methods, and c) our stepsize formula is simple, explicit, and does not require solving any subproblem. Our convergence proofs hold under affine-invariance assumptions closely related to the notion of self-concordance. Finally, our method has competitive performance when compared to existing baselines, which share the same fast global convergence guarantees.
	\end{abstract}
	
	
	\section{Introduction} \label{sec:intro}
Second-order optimization methods are the backbone of much of industrial and scientific computing. With origins that can be tracked back several centuries to the pioneering works of  Newton~\citep{Newton}, Raphson~\citep{Raphson} and Simpson~\citep{Simpson}, they were extensively studied, generalized, modified, and improved in the last century \citep{kantorovich1948functional, more1978levenberg, Griewank-cubic-1981}. For a review of the historical development of the classical Newton-Raphson method, we refer the reader to the work of \citet{ypma1995historical}. The number of extensions and applications of second-order optimization methods is enormous; for example, the survey of \citet{conn2000trust} on trust-region and quasi-Newton methods  cited over a thousand papers. 

\subsection{Second-order methods and modern machine learning}		
Despite the rich history of the field, research on second-order methods has been flourishing up to this day. Some of the most recent development in the area was motivated by the needs of modern machine learning. 
Data-oriented machine learning depends on large datasets (both in number of features and number of datapoints), which are often stored in distributed/decentalized fashion. Consequently, there is a need for scalable algorithms.

To tackle large number of features, \citet{SDNA, RSN, RBCN} and \citet{SSCN} proposed variants of Newton method operating in random low-dimensional subspaces. On the other hand, \citet{NewtonSketch,Roosta-MAPR} and \citet{SN2019} developed subsampled Newton methods for solving empirical risk minimization (ERM) problems with large training datasets. Additionally, \citet{Bordes2009a,OnlineBFGS,SBFGS,SQN2016} and \citet{RBFGS2020} proposed stochastic variants of  quasi-Newton methods.
To tackle non-centralized nature of datasets, \citet{DANE, AIDE, GIANT} and \citet{DINGO2019} considered distributed variants of Newton method, with improvements under various data/function similarity assumptions. \citet{NL2021, FedNL, BL2022, Newton-3PC} and \citet{agafonov2022flecs} developed communication-efficient distributed variants of Newton method using the idea of communication compression and error compensation, without the need for any similarity assumptions. 

We highlight two main research directions throughout of history of second-order methods: globally convergent methods under additional second-order smoothness \eqref{eq:L2-smooth} \citep{nesterov2006cubic} and local methods for self-concordant problems \eqref{eq:self-concordance} \citep{nesterov1989self}. 
	Former approach lead to various improvements such as acceleration \citet{nesterov2008accelerating, monteiro2013accelerated}, usage of inexact information \citet{ghadimi2017second, agafonov2020inexact}, generalization to tensor methods and their acceleration \citet{nesterov2021implementable, gasnikov2019near, kovalev2022first}, superfast second-order methods under higher smoothness \citet{nesterov2021superfast, nesterov2021inexact, kamzolov2020near}.
	Latter approach was a breakthrough in 1990s, it lead to interior-point methods. Summary of the results can be found in books \citet{nesterov1994interior}, \citet{nesterov2018lectures}. This direction is still popular up to this day \citet{dvurechensky2018global, hildebrand2020optimal, doikov2022affine, nesterov2022set}.

    As easy-to-scale alternative to second-order methods, first-order algorithms attracted a lot of attention. Many of their aspects have been explored, including strong results in variance reduction (\citet{roux2012stochastic},\citet{gower2020variance},
	\cite{johnson2013accelerating,nguyen2021inexact,nguyen2017sarah}), preconditioning \cite{jahani2022doubly}, acceleration (\citet{nesterov2013introductory},
	\citet{aspremont2021acceleration}) distributed/federated computation (\citet{konecny2016federated,chen2022distributed,berahas2016multi,takavc2015distributed,richtarik2016distributed}, \citet{kairouz2021advances}), and  decentralized computation \citep{koloskova2020unified,sadiev2022decentralized,borodich2021decentralized}.
 However, the convergence of first-order methods always depends on the conditioning of the underlying problem. Improving conditioning is fundamentally impossible without using higher-order information.
	Removing this conditioning dependence is possible by incorporating information about the Hessian. This results in second-order methods. Their most compelling advantage is that they can converge extremely quickly, usually in just a few iterations.
	\vspace{-0.2cm}
	
	\subsection{Newton method: benefits and limitations}
	\vspace{-0.2cm}
	One of the most famous algorithms in optimization, Newton method, takes iterates of form 
	\begin{equation} \label{eq:newton_step}
	x_{k+1} = x_k - \left[\nabla^2 f(x_k) \right]^{-1} \nabla f(x_k).
	\end{equation}
	Its iterates satisfy the recursion $\norm{\g(x_{k+1})} \leq c \norms
	{\g(x_k)}$ (for a constant $c>0$), which means that Newton method converges locally quadratically. 
	However, convergence of Newton method is limited to only to the neighborhood of the solution.  
	It is well-known that when initialized far from optimum, Newton can diverge, both in theory and practice (\citet{jarre2016simple}, \citet{mascarenhas2007divergence}). We can explain intuition why this happens.
	Update rule of Newton \eqref{eq:newton_step} was chosen to minimize right hand side of Taylor approximation
	\begin{equation} \label{eq:newton_approximation}
	\gboxeq{ f(y) \approx Q_{f}(y;x) \eqdef f(x) + \la\nabla f(x),y-x\ra + \frac{1}{2} \la \nabla^2 f(x)(y-x),y-x\ra.}
	\end{equation}
	The main problem is that Taylor approximation is not an upper bound, and therefore, global convergence of Newton method is not guaranteed.
	
	\subsection{Towards a fast globally convergent Newton method}
	Even though second-order algorithms with superlinear local convergence rates are very common, global convergence guarantees of any form are surprisingly rare. Many papers proposed globalization strategies, essentially all of them require some combination of the following: line-search, trust regions, damping/truncation, regularization. 
	Some popular globalization strategies show non-increase of functional value during the training. However, this turned out to be insufficient for convergence to the optimum.  \citet{jarre2016simple}, \citet{mascarenhas2007divergence} designed simple functions (strictly convex with compact level sets) so that Newton method with Armijo stepsizes does not converge to the optimum. 
	To this day, virtually all known global convergence guarantees are for regularized Newton methods with, which can be written as
	\begin{equation} \label{eq:LM_update}
	x_{k+1} = x_k - \alpha_k \left( \h(x_k) + \lambda_k \mI \right)^{-1} \g(x_k),
	\end{equation}
	where $\lambda_k \geq 0$.
	Parameter $\lambda_k$ is also known as Levenberg-Marquardt regularization \citep{more1978levenberg}, which was first introduced for a nonlinear least-squares objective. For simplicity, we disregard differences in the objectives for the literature comparison.
	Motivation behind update \eqref{eq:LM_update} is to replace Taylor approximation in \eqref{eq:newton_approximation} by an upper bound. 
	The first method with proven global convergence rate $\cO\left(k^{-2}\right)$ is Cubic Newton method \citep{nesterov2006cubic} for function $f$ with Lipschitz-continuous Hessian,
	\begin{equation} \label{eq:L2-smooth}
	\gboxeq{ \| \nabla^2 f(x) - \nabla^2 f(y)\|_2 \leq L_2\|x-y\|_2.}
	\end{equation} 
	Under this condition, one can upper bound of Taylor approximation \cref{eq:newton_approximation} as
	\begin{equation} \label{eq:cubic_newton_approximation}
	f(y) \leq Q_{f}(y;x) +\tfrac{L_2}{6}\|y-x\|_2^3.
	\end{equation}
	Next iterate of Cubic Newton can be written as a minimizer of right hand side of \eqref{eq:cubic_newton_approximation}\footnote{\label{ft:E}Where $\mathbb{E}$ a $d$-dimensional Euclidean space, defined in \Cref{ssec:notation}.}
	\begin{equation} \label{eq:cubic_newton_step}
	x_{k+1} = \argmin_{y\in \bbE} \lb Q_{f}(y;x_k) +\tfrac{L_2}{6}\|y-x_k\|_2^3 \rb.
	\end{equation}
	For our newly-proposed algorithm \ain{} (\Cref{alg:ain}), we are using almost identical step\footref{ft:E} \footnote{\label{ft:Lf}Function $f$ is $\Lsemi$-semi-strongly self-concordant (\Cref{def:semi-self-concordance}). Instead of $\Lsemi$, we will use its upper bound $\Lalg$, $\Lalg \geq \Lsemi$.}
	\begin{equation} \label{eq:aicn_step}
	\gboxeq{x_{k+1} =  \argmin_{y\in \bbE} \left\lbrace Q_{f}(y;x_k) +\tfrac{\Lsemi}{6}\|y-x_k\|_{x_k}^3\right\rbrace.}
	\end{equation}
	The difference between the update of Cubic Newton and \ain{} is that we measure the cubic regularization term in the local Hessian norms. This seemingly negligible perturbation turned out to be of a great significance for two reasons \textbf{a)} model in \eqref{eq:aicn_step} is affine-invariant, \textbf{b)} surprisingly, the next iterate of \eqref{eq:aicn_step} lies in the direction of Newton method step and is obtainable without regularizer $\lambda_k$ (\ain{} just needs to set stepsize $\alpha_k$). We elaborate on both of these points later in the paper.

	Cubic Newton method \eqref{eq:cubic_newton_step} can be equivalently expressed in form \eqref{eq:LM_update} with $\alpha_k=1$ and $\lambda_k = L_2 \norm{x_k -x_{k+1}}$. However, since such $\lambda_k$ depends on $x_{k+1}$, resulting algorithm requires additional subroutine for solving its subproblem each iteration.
	Next work showing convergence rate of regularized Newton method \citep{polyak2009regularized} avoided implicit steps by choosing $\lambda_k\propto \norm{\g(x_k)}$. However, this came with a trade-off for slower convergence rate, $\cO\left( k^{-1/4}\right)$.
	Finally, \citet{mishchenko2021regularized} (see also the work of \citet{doikov2021optimization}) improved upon both of these works by using explicit regularization $\alpha_k=1$, $\lambda_k \propto \sqrt{L_2 \norm{\g(x_k)}}$, and proving global rate $\cO\left(k^{-2}\right)$.
	\vspace{-0.2cm}
	
	\section{Contributions} \label{sec:contributions}	
	\vspace{-0.2cm}
	\subsection{\ain{} as a damped Newton method}
	\vspace{-0.2cm}
	In this work, we investigate global convergence for most basic globalization strategy, stepsized Newton method without any regularizer ($\lambda_k=0$). This algorithm is also referred as damped (or truncated) Newton method; it can be written as 
	\[x_{k+1} = x_k - \alpha_k \h(x_k) ^{-1} \g(x_k).\]
	Resulting algorithm was investigated in detail as an interior-point method. \citet{nesterov2018lectures} shows quadratic local convergence for stepsizes $\alpha_1 \eqdef \frac 1 {1 + G_1}, \alpha_2 \eqdef \frac {1+ G_1} {1 + G_1 + G_1^2}$, where\footnote{Function $f$ is $\Lstandard$-self-concordant (\Cref{def:self-concordance}).}\footnote{\label{ft:G}Dual norm $\normMd{\g(x_k)} {x_k} = \<\g(x_k), \h(x_k)^{-1} \g(x_k)>$ is defined in \Cref{ssec:notation}.} $G_1 \eqdef \Lstandard \normMd {\g(x_k)} {x_k}$. 
	Our algorithm \ain{} is also damped Newton method with stepsize \gbox{$\alpha = \frac {-1 + \sqrt{1+ 2G}}{G}$,} where\footref{ft:Lf}\footref{ft:G} \gbox{$G \eqdef \Lsemi \normMd {\g(x_k)} {x_k}$. } Mentioned stepsizes $\alpha_1, \alpha_2, \alpha$ share two characteristics. Firstly, all of them depends on gradient computed in the dual norm and scaled by a smoothness constant ($G_1$ or $G$). Secondly, all of these stepsizes converge to $1$ from below (for $\hat \alpha \in \{\alpha_1, \alpha_2, \alpha\}$ holds $0< \hat \alpha \leq 1$ and $\lim_{x \rightarrow x_*} \hat \alpha = 1$). Our algorithm uses stepsize bigger by orders of magnitude (see \Cref{fig:stepsizes} in \Cref{sec:ap_experiments} for detailed comparison).
	The main difference between already established stepsizes $\alpha_1, \alpha_2$ and our stepsize $\alpha$ are resulting global convergence rates. While stepsize $\alpha_2$ does not lead to a global convergence rate, and $\alpha_1$ leads to rate $\cO \left( k^{-1/2} \right)$, our stepsize $\alpha$ leads to a significantly faster, $\cO\left(k^{-2}\right)$ rate.
	Our rate matches best known global rates for regularized Newton methods. We manage to achieve these results by carefully choosing assumptions. While rates for $\alpha_1$ and $\alpha_2$ follows from standard self-concordance, 
	 our assumptions are a consequence of a slightly stronger version of self-concordance. We will discuss this difference in detail later.
	
	We summarize important properties of regularized Newton methods with fast global convergence guarantees and damped Newton methods in \Cref{tab:global_newton_like}.
	
	\begin{table*}[!t]
		\centering
		\setlength\tabcolsep{0.7pt} 
		\begin{threeparttable}[b]
			{\scriptsize
				\renewcommand\arraystretch{2.2}
				\caption{A summary of regularized Newton methods with global convergence guarantees. We consider algorithms with updates of form $x_{k+1} = x_k - \alpha_k \left( \h(x_k) + \lambda_k \mI \right)^{-1} \g(x_k)$. For simplicity of comparison, we disregard differences in objectives and assumptions. We assume $L_2$-smoothness of Hessian, $\Lsemi$-semi-strong self-concordance, convexity (\Cref{def:semi-self-concordance}), $\mu$-strong convexity locally and bounded level sets. For regularization parameter holds $\lambda_k\geq0$ and stepsize satisfy $0<\alpha_k \leq 1$. We highlight the best know rates in blue.}
				\label{tab:global_newton_like}
				\centering 
				\begin{tabular}{ccccccccc}\toprule[.1em]
					\bf Algorithm &\bf  \makecell{Regularizer \\$\lambda_k \propto$} & \bf \makecell{Stepsize\\ $\alpha_k=$} & \bf  \makecell{Affine\tnote{\color{red}(1)} \\ invariant?\\ (alg., ass., rate)} &\bf  \makecell{Avoids \\ line\\ search?} &\bf  \makecell{Global \\ convergence \\ rate} & \bf \makecell{Local \tnote{\color{red}(1)} \\convergence\\exponent} & \bf Reference \\
					\midrule
					\makecell{Newton} & $0$ & $1$ & (\cmark, \xmark, \xmark) & \cmark & \xmark & {\color{blue} $ 2$} &  \citet{kantorovich1948functional} \\
					\makecell{Newton} & $0$ & $1$ & (\cmark,\cmark,\cmark) & \cmark & \xmark & {\color{blue} $ 2$} &  \citet{nesterov1994interior} \\
					\makecell{Damped Newton B} &$0$ & $\tfrac 1 {1+G_1}$\tnote{\color{red}(4)} & (\cmark,\cmark,\cmark) & \cmark & $\cO\left( k^{-\frac 12} \right)$ & {\color{blue} $ 2$} &  \citet[$(5.1.28)$]{nesterov2018lectures} \\
					\makecell{Damped Newton C} &$0$ & $\tfrac {1 + G_1} {1+G_1 + G_1^2}$\tnote{\color{red}(4)} & (\cmark,\cmark,\cmark) & \cmark & \xmark & {\color{blue} $ 2$} &  \citet[$(5.2.1)_C$]{nesterov2018lectures} \\
					\makecell{Cubic Newton} &$L_2\norm{\xdiff} $ & $1$ & (\xmark, \xmark, \xmark) & \xmark & $\cO\left( {\color{blue}{ k^{-2}} } \right)$ & {\color{blue} $ 2$} &  \makecell{\citet{nesterov2006cubic},\\\citet{griewank1981modification},\\ \citet{doikov2021local}} \\
					\makecell{Locally Reg. Newton} &$\norm{\g(x_k)} $ & $1$ & (\xmark, \xmark, \xmark) & \cmark & \xmark & {\color{blue} $ 2$} &  \makecell{\citet{polyak2009regularized}} \\
					\makecell{Globally Reg. Newton} &$\norm{\g(x_k)} $ & $\tfrac{\mu +\norm{\g(x_k)}}{L_1}$\tnote{\color{red}(3)} & (\xmark, \xmark, \xmark) & \xmark & $\cO\left( k^{-\frac 14} \right)$ & {\color{blue} $ 2$} &  \makecell{\citet{polyak2009regularized}} \\
					\makecell{Globally Reg. Newton} &$\sqrt{L_2\norm{\g(x_k)}}$ & $1$ & (\xmark, \xmark, \xmark) & \cmark & $\cO\left( {\color{blue}{ k^{-2}} } \right)$ & $ \tfrac 32$ &  \makecell{\citet{mishchenko2021regularized} \\ \citet{doikov2021optimization}} \\
					\midrule
					\makecell{\textbf{AIC Newton}\\ (\Cref{alg:ain})} & $0$ & $\tfrac {-1 + \sqrt{1+2 G} }{G}$ \tnote{\color{red}(4)} & (\cmark, \cmark, \cmark) & \cmark & $\cO\left( {\color{blue} {k^{-2}} } \right)$ & {\color{blue} $ 2$} &   \textbf{This work} \\
					\bottomrule[.1em]
				\end{tabular}
			}
			\begin{tablenotes}
				{\scriptsize
					\item [\color{red}(1)] In triplets, we report whether algorithm, used assumptions, convergence rate are affine-invariant, respectively.
					\item [\color{red}(2)] For a Lyapunov function $\Phi^k$ and a constant $c$, we report exponent $\beta$ of $\Phi(x_{k+1}) \leq c \Phi (x_k)^\beta$.
					\item [\color{red}(3)] $f$ has $L_1$-Lipschitz continuous gradient.
					\item [\color{red}(4)] For simplicity, we denote $G_1 \eqdef \Lstandard \normMd {\g(x_k)} {x_k}$ and $G \eqdef \Lsemi \normMd {\g(x_k)} {x_k}$ (for $\Lalg \leftarrow \Lsemi$).
				}
			\end{tablenotes}
		\end{threeparttable}
		\vspace{-1cm}
	\end{table*}

	\vspace{-0.2cm}
\subsection{Summary of contributions}
\vspace{-0.2cm}
	To summarize novelty in our work, we present a novel algorithm \ain{}. Our algorithm can be interpreted in two viewpoints \textbf{a)} as a regularized Newton method (version of Cubic Newton method), \textbf{b)} as a damped Newton method. \ain{} enjoys the best properties of these two worlds: 
	\vspace{-0.2cm}
\begin{itemize}[leftmargin=*]
    \item \textbf{Fast global convergence}: \ain{} converges globally with rate $\cO\left( k^{-2} \right)$ (\Cref{thm:convergence}, \ref{th:global}), which matches state-of-the-art global rate for all regularized Newton methods. Furthermore, it is the first such rate for Damped Newton method.
	
    \item \textbf{Fast local convergence:} In addition to the fast global rate, \ain{} decreases gradient norms locally in quadratic rate (\Cref{th:local}). This result matches the best-known rates for both regularized Newton algorithms and damped Newton algorithms.
	
    \item \textbf{Simplicity:} Previous works on Newton regularizations can be viewed as a popular global-convergence fix for the Newton method. We propose an even simpler fix in the form of a stepsize schedule (\Cref{sec:alg}).
    
    \item \textbf{Implementability:} 
    Step of \ain{} depends on a smoothness constant $\Lsemi$ (\Cref{def:semi-self-concordance}). Given this constant, next iterate of \ain{} can be computed directly. 
    
    This is improvement over Cubic Newton \citep{nesterov2006cubic}, which for a given constant  $L_2$ needs to run \textbf{line-search} subroutine each iteration to solve its subproblem.
    
    \item \textbf{Improvement:} Avoiding latter subroutine yields theoretical improvements.
    If we compute matrix inverses naively, iteration cost of \ain{} is $\cO(d^3)$ (where $d$ is a dimension of the problem), which is improvement over 
    $\cO(d^3 \log \varepsilon^{-1})$ iteration cost of Cubic Newton \citep{nesterov2006cubic}.

    \item \textbf{Practical performance:} We show that in practice, \ain{} outperforms all algorithms sharing same convergence guarantees: Cubic Newton \citep{nesterov2006cubic} and Globally Regularized Newton \citep{mishchenko2021regularized} and \citet{doikov2021optimization}, and fixed stepsize Damped Newton method (\Cref{sec:experiments}).
	
    \item \textbf{Geometric properties:} We analyze \ain{} under more geometrically natural assumptions. Instead of smoothness, we use a version of self-concordance (\Cref{sec:affine_invariance}), which is invariant to affine transformations and hence also to a choice of a basis. 
    \ain{} preserves affine-invariance obtained from assumptions throughout the convergence.
    In contrast, Cubic Newton uses base-dependent $l_2$ norm and hence depends on a choice of a basis. This represents an extra layer of complexity.

	\item \textbf{Alternative analysis:} We also provide alternative analysis under weaker assumptions
(\Cref{sec:ap_global_nsc}).
\end{itemize} 

The	rest of the paper is structured as follows. In \Cref{ssec:notation} we introduce our notation. In \Cref{sec:alg}, we discuss algorithm \ain{}, affine-invariant properties and self-concordance. In Sections~\ref{sec:global} and \ref{sec:local} we show global  and local convergence guarantees, respectively.
	In \Cref{sec:experiments} we present an empirical comparison of \ain{} with other algorithms sharing fast global convergence.
	
	\vspace{-0.2cm}
	\subsection{Minimization problem \& notation} \label{ssec:notation}
	\begin{mdframed}[backgroundcolor=lightgray!50]
		In the paper, we consider a $d$-dimensional Euclidean space $\mathbb{E}$. Its dual space, $\mathbb{E}^\ast$, is composed of all linear functionals on $\mathbb{E}$. For a functional $g \in \mathbb{E}^\ast$, we denote by $\la g,x\ra$ its value at $x\in \mathbb{E}$.
	\end{mdframed}
	
	We consider the following convex optimization problem:
	\begin{equation}
	\min\limits_{x\in \mathbb{E}}  f(x),
	\label{eq_pr}
	\end{equation} 
	where $f(x) \in C^2$ is a convex function with continuous first and second derivatives and positive definite Hessian. We assume that the problem has a unique minimizer $x_{\ast}\in \argmin\limits_{x\in \mathbb{E}}  f(x)$. 
	Note, that 
	$\nabla f(x) \in \bbE^{\ast}, \quad \nabla^2 f(x) h \in \bbE^{\ast}.$
	Now, we introduce different norms for spaces $\bbE$ and $\bbE^{\ast}$. Denote $x,h \in \bbE, g \in \bbE^{\ast}$. For a self-adjoint positive-definite operator $\mathbf H : \bbE \rightarrow \bbE^\ast$, we can endow these spaces with conjugate Euclidean norms:
	\[
	\|x\|_ {\mathbf H} \eqdef  \la \mathbf Hx,x\ra^{1/2}, \, x \in \bbE, \qquad \|g\|_{\mathbf H}^{\ast}\eqdef\la g,\mathbf H^{-1}g\ra^{1/2},  \, g \in \bbE^{\ast}.
	\]
	For identity $\mathbf H= \mI$, we get classical Eucledian norm $\|x\|_\mI = \la x,x\ra^{1/2}$. For local Hessian norm $\mathbf H = \nabla^2 f(x)$, we use shortened notation
	\begin{equation}
	\label{eq:hessian_norm}
	\gboxeq{\|h\|_x \eqdef \la \nabla^2 f(x) h,h\ra^{1/2}, \, h \in \bbE,} \qquad \gboxeq{\|g\|_{x}^{\ast} \eqdef \la g,\nabla^2 f(x)^{-1}g\ra^{1/2},  \, g \in \bbE^{\ast}.}
	\end{equation}
	Operator norm is defined by 
	\begin{equation}
	\label{eq:matrix_operator_norm}
	    \gboxeq{\normM{\mathbf H} {op} \eqdef \sup_{v\in \mathbb E} \tfrac {\normMd {\mathbf H v} x}{\normM v x}},
	\end{equation} 
	for $\mathbf H :\bbE \rightarrow \bbE^\ast$ and a fixed $x \in \bbE$.
	If we consider a specific case $\bbE \leftarrow \R^d$, then $\mathbf H$ is a symmetric positive definite matrix.
	
	\section{New Algorithm: Affine-Invariant Cubic Newton} \label{sec:alg}
	\vspace{-0.2cm}
    Finally, we are ready to present algorithm \ain{}. It is damped Newton method with updates
	\begin{equation} \label{eq:update}
	x_{k+1} = x_k - \alpha_k \h(x_k)^{-1} \g(x_k),
	\end{equation}
	with stepsize $$\gboxeq{\textstyle \alpha_k \eqdef \frac {-1+\sqrt{1+2 \Lalg \normMd {\g(x_k)} {x_k}}}{\Lalg \normMd {\g(x_k)} {x_k}},}$$ as summarized in \Cref{alg:ain}. 
	Stepsize satisfy $\alpha_k \leq 1$ (from AG inequality, \eqref{eq:AG}). Also $\lim_{x_k \rightarrow x_*} \alpha_k = 1$, hence \eqref{eq:update} converges to Newton method.
	Next, we are going to discuss geometric properties of our algorithm.
	\begin{algorithm} 
		\caption{\ain{}: Affine-Invariant Cubic Newton} \label{alg:ain}
		\begin{algorithmic}[1]
			\State \textbf{Requires:} Initial point $x_0 \in \bbE$, constant $\Lalg$ s.t. $\Lalg \geq \Lsemi>0$
			\For {$k=0,1,2\dots$}
			\State $\alpha_k = \tfrac {-1 +\sqrt{1+2 \Lalg \|\g(x_k)\|_{x_k}^* }}{\Lalg \| \g(x_k)\|_{x_k}^* }$
			\State $x_{k+1} = x_k - \alpha_k \left[\h(x_k)\right]^{-1} \g(x_k)$ \Comment {Note that  $x_{k+1} \stackrel{\eqref{eq:step}}= S_{f,\Lalg}(x_k)$.}
			\EndFor
		\end{algorithmic}
	\end{algorithm}
	\vspace{-0.2cm}
	\subsection{Geometric properties: affine invariance} \label{sec:affine_invariance}
	\vspace{-0.2cm}
	One of the main geometric properties of the Newton method is \textit{affine invariance}, invariance to affine transformations of variables.
	Let $\mathbf A: \bbE \rightarrow \bbE^\ast$ be a non-degenerate linear transformation. Consider function $\phi(y) = f(\mathbf Ay)$. By affine transformation, we denote $f(x) \rightarrow \phi(y) = f(\mathbf Ay), x \rightarrow \mathbf A^{-1}y$. 
\vspace{-0.2cm}
	\paragraph{Significance of norms:}
	Note that local Hessian norm $\|h\|_{\nabla f(x)}$ is affine-invariant because
	\[\|z\|_{\nabla^2 \phi(y)}^2=\la\nabla^2 \phi(y)z,z \ra = \la \mathbf A^\top \nabla^2 f(\mathbf A y) \mathbf Az,z\ra = \la \nabla^2 f(x)h,h\ra=\|h\|_{\nabla^2 f(x)}^2,  \]
	where $h=\mathbf Az$. On the other hand, induced norm $\|h\|_\mI$ is not affine-invariant because
	\[\|z\|_\mI^2=\la z,z \ra = \la \mathbf A^{-1}h, \mathbf A^{-1}h\ra = \| \mathbf A^{-1} h\|^2_\mI.  \]
	With respect to geometry, the most natural norm is local Hessian norm, $\|h\|_{\nabla f(x)}$. From affine invariance follows that for this norm, the level sets $\lb y\in \bbE\,|\,\| y-x \|_x^2 \leq c\rb$ are balls centered around $x$ (all directions have the same scaling). In comparison, scaling of the $l_2$ norm is dependent on eigenvalues of the Hessian. In terms of convergence, one direction in $l_2$ can significantly dominate others and slow down an algorithm.
	\vspace{-0.2cm}
	\paragraph{Significance for algorithms:}
	Algorithms that are not affine-invariant can suffer from chosen coordinate system. This is the case for Cubic Newton, as its model \eqref{eq:L2-smooth} is bound to base-dependent $l_2$ norm. Same is true for any other method regularized with an induced norm $\|h\|_\mI$.
    On the other hand, (damped) Newton methods have affine-invariant models, and hence as algorithms independent of the chosen coordinate system. We prove this claim in following lemma (note: $\alpha_k=1$ and $\alpha_k$ from \eqref{eq:update} are affine-invariant).
	\begin{lemma}[Lemma 5.1.1 \citet{nesterov2018lectures}] \label{le:AI_newton_nesterov}
		Let the sequence $\lb x_k \rb$ be generated by a damped Newton method with affine-invariant stepsize $\alpha_k$, applied to the function $f$:
		$x_{k+1}=x_k - \alpha_k\left[\nabla^2 f(x_k) \right]^{-1}\nabla f(x_k).$
		For function $\phi(y)$, damped Newton method generates  $\lb y_k \rb$:
		$y_{k+1}=y_k - \alpha_k\left[\nabla^2 \phi(y_k) \right]^{-1}\nabla \phi(y_k),$
		with $y_0 = \mathbf A^{-1} x_0$. Then $y_k= \mathbf A^{-1} x_k$.
	\end{lemma}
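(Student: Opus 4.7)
The plan is a short induction on $k$. The base case $y_0 = \mathbf{A}^{-1} x_0$ is given by hypothesis. For the inductive step, I would assume $y_k = \mathbf{A}^{-1} x_k$ (equivalently $\mathbf{A} y_k = x_k$) and derive $y_{k+1} = \mathbf{A}^{-1} x_{k+1}$ from the update rule applied to $\phi$.

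The central ingredient is the chain rule for $\phi(y) = f(\mathbf{A} y)$:
\[ \nabla \phi(y) = \mathbf{A}^\top \nabla f(\mathbf{A} y), \qquad \nabla^2 \phi(y) = \mathbf{A}^\top \nabla^2 f(\mathbf{A} y)\, \mathbf{A}. \]
Evaluating at $y_k$ and invoking the induction hypothesis $\mathbf{A} y_k = x_k$ gives $\nabla \phi(y_k) = \mathbf{A}^\top \g(x_k)$ and $\nabla^2 \phi(y_k) = \mathbf{A}^\top \h(x_k)\, \mathbf{A}$. Substituting into the damped Newton update for $\phi$ and using $[\mathbf{A}^\top \h(x_k) \mathbf{A}]^{-1} = \mathbf{A}^{-1} \h(x_k)^{-1} \mathbf{A}^{-\top}$, the $\mathbf{A}^{-\top}$ and $\mathbf{A}^\top$ factors cancel, yielding
\[ y_{k+1} = y_k - \alpha_k \mathbf{A}^{-1} \h(x_k)^{-1} \g(x_k) = \mathbf{A}^{-1}\bigl(x_k - \alpha_k \h(x_k)^{-1} \g(x_k)\bigr) = \mathbf{A}^{-1} x_{k+1}, \]
which closes the induction.

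The only subtle point is that the scalar $\alpha_k$ must take the same numerical value whether computed along the $x$-trajectory or the $y$-trajectory; this is precisely the lemma's hypothesis that $\alpha_k$ is affine-invariant. For \ain{}'s specific stepsize this is automatic, since the same cancellation gives $\la \nabla\phi(y_k), [\nabla^2\phi(y_k)]^{-1} \nabla\phi(y_k)\ra = \la \g(x_k), \h(x_k)^{-1} \g(x_k)\ra$, so $\alpha_k$ depends on $\g$ only through the affine-invariant quantity $\normMd{\g(x_k)}{x_k}$. I do not expect any genuine obstacle here — the result is a routine linear-algebra identity whose content is conceptual (affine-invariance of the Newton direction under change of basis) rather than technical.
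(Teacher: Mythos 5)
Your proof is correct and follows essentially the same route as the paper: induction on $k$, the chain-rule identities $\nabla \phi(y) = \mathbf{A}^\top \nabla f(\mathbf{A}y)$ and $\nabla^2 \phi(y) = \mathbf{A}^\top \nabla^2 f(\mathbf{A}y)\mathbf{A}$, cancellation of the $\mathbf{A}^{\top}$ factors, plus the separate check that $\normMd{\g(x_k)}{x_k}$ (hence $\alpha_k$) is affine-invariant. If anything, your write-out of the inverse $[\mathbf{A}^\top \h(x_k)\mathbf{A}]^{-1} = \mathbf{A}^{-1}\h(x_k)^{-1}\mathbf{A}^{-\top}$ is cleaner than the paper's intermediate display, which contains a small typographical slip at that step.
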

	\vspace{-0.2cm}
	\subsection{Significance in assumptions: self-concordance}
	\vspace{-0.2cm}
	We showed that damped Newton methods preserve affine-invariance through iterations. Hence it is more fitting to analyze them under affine-invariant assumptions. Affine-invariant version of smoothness, \emph{self-concordance}, was introduced in \citet{nesterov1994interior}.
	
	\begin{mdframed} [backgroundcolor=lightgray!50]
		\begin{definition} \label{def:self-concordance}
			Convex function $f \in C^3$ is called \textit{self-concordant}  if
			\begin{equation}
			\label{eq:self-concordance}
			|D^3 f(x)[h]^3| \leq \Lstandard\|h\|_{x}^3, \quad \forall x,h\in \bbE,
			\end{equation}
			where for any integer $p\geq 1$, by $D^p f(x)[h]^p \eqdef D^p f(x)[h,\ldots,h]$ we denote the $p$-th order directional derivative\footnote{For example, $D^1 f(x)[h] =\langle\nabla  f(x),h\rangle$ and $D^2 f(x)[h]^2 = \la \nabla^2 f(x) h, h \ra $.} of $f$ at $x\in \bbE$ along direction $h\in \bbE$.
		\end{definition}
	\end{mdframed}
	Both sides of inequality are affine-invariant. This assumption corresponds to a big class of optimization methods called interior-point methods. Self-concordance implies uniqueness of the solution, as stated in following proposition.
	\begin{proposition}[Theorem 5.1.16, \citet{nesterov2018lectures}] \label{prop:unique_solution}
		Let a self-concordant function $f$ be bounded below. Then it attains its minimum at a single point.
	\end{proposition}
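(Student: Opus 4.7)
The plan is to apply a damped Newton iteration starting from an arbitrary $x_0 \in \bbE$ to construct a convergent minimizing sequence, and then appeal to strict convexity for uniqueness.

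The first technical step is the key self-concordant decrease inequality. For the damped Newton iterate
\[ x_{k+1} = x_k - \tfrac{1}{1 + \Lstandard\lambda_k}\,[\h(x_k)]^{-1}\g(x_k), \qquad \lambda_k \eqdef \normMd{\g(x_k)}{x_k}, \]
I would establish
\[ f(x_{k+1}) \;\leq\; f(x_k) - \tfrac{1}{\Lstandard^2}\,\omega\!\left(\Lstandard \lambda_k\right), \qquad \omega(t) \eqdef t - \ln(1+t). \]
The derivation restricts $f$ to the one-dimensional segment $x_k + t\cdot(x_{k+1}-x_k)$, uses \eqref{eq:self-concordance} to control the third derivative of the resulting scalar function, and integrates twice. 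Positive-definiteness of $\h$ (standing assumption in \Cref{ssec:notation}) ensures that the Newton direction is well defined and that $\lambda_k$ is a genuine Euclidean norm on the dual.

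The second step is to exploit this decrease globally. Since $f$ is bounded below by hypothesis, telescoping gives $\sum_k \omega(\Lstandard\lambda_k) < \infty$, so $\lambda_k \to 0$. Once $\Lstandard\lambda_k < 1$, a further integration of the one-dimensional self-concordance inequality yields the quadratic contraction $\Lstandard\lambda_{k+1} \leq (\Lstandard\lambda_k)^2/(1-\Lstandard\lambda_k)^2$, so $\lambda_k$ decays super-geometrically. Because $\|x_{k+1}-x_k\|_{x_k} \leq \lambda_k$ and self-concordance makes local Hessian norms at neighbouring iterates uniformly comparable (via the Dikin-type estimate $\|\cdot\|_{y} \asymp \|\cdot\|_{x}$ whenever $\|y-x\|_x < 1/\Lstandard$), the differences $x_{k+1}-x_k$ are summable in a fixed Hessian norm along the tail and $\{x_k\}$ is Cauchy. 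Its limit $x^\ast \in \bbE$ satisfies $\g(x^\ast) = 0$ by continuity, so by convexity $x^\ast$ is a global minimizer.

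Uniqueness is then immediate: positive-definiteness of $\h$ makes $f$ strictly convex, so two distinct minimizers would be contradicted by evaluating $f$ at their midpoint. The main obstacle in this route is the decrease-and-contraction estimates combined with the local-to-global Cauchy argument: one must turn the tensorial bound \eqref{eq:self-concordance} into sharp one-variable estimates along each damped Newton step, and then propagate quadratic decay of $\lambda_k$ through Hessian-norm changes that themselves depend on the iterates, all while carrying the geometry from the moving norm $\|\cdot\|_{x_k}$ back to a fixed reference norm in which completeness can be invoked.
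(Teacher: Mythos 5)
The paper does not prove this proposition at all --- it is imported verbatim from \citet{nesterov2018lectures} (Theorem 5.1.16), so the only thing to compare against is the proof in the cited source. Your argument is essentially that proof: Nesterov also runs the damped Newton scheme, establishes the per-step decrease $f(x_k)-f(x_{k+1})\geq \Lstandard^{-2}\,\omega(\Lstandard\lambda_k)$ with $\omega(t)=t-\ln(1+t)$, and uses boundedness below to force $\lambda_k\to 0$. Where you genuinely diverge is the existence step. Nesterov does not build a Cauchy sequence; once some iterate satisfies $\Lstandard\lambda_f(\bar x)<1$ he invokes the global lower bound $f(y)\geq f(\bar x)+\langle\nabla f(\bar x),y-\bar x\rangle+\Lstandard^{-2}\omega(\Lstandard\|y-\bar x\|_{\bar x})$, whose superlinear growth in $\|y-\bar x\|_{\bar x}$ dominates the linear term and shows the level set of $\bar x$ is bounded, after which closedness of $f$ and Weierstrass give a minimizer. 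Your route --- quadratic contraction of $\lambda_k$, summability of the steps, norm comparability along the tail, Cauchy in a fixed Hessian norm --- also works here because the paper takes $\dom f=\bbE$ (complete), but it is strictly more work, and you should be aware of two points: the contraction $\Lstandard\lambda_{k+1}\leq(\Lstandard\lambda_k)^2/(1-\Lstandard\lambda_k)^2$ you quote is the estimate for the \emph{full} Newton step, whereas your iteration is damped (a quadratic bound still holds for the damped step, but with different constants and a smaller admissible region); and the norm-comparability factors you need accumulate multiplicatively along the tail, so you must first extract $\sum_k\lambda_k<\infty$ from the quadratic decay before the Cauchy claim closes --- the order of these two deductions matters. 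The level-set argument buys you existence without any of this bookkeeping and without assuming the domain is all of $\bbE$; your argument buys you, as a byproduct, an actual convergent algorithmic sequence. The uniqueness step via strict convexity from the positive-definite Hessian is fine and matches the standing assumptions of \Cref{ssec:notation}.
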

	
	\citet{rodomanov2021greedy} introduced stronger version of self-concordance assumption.
	\begin{mdframed} [backgroundcolor=lightgray!50]
		\begin{definition}	\label{def:strong-self-concordance}
			Convex function $f \in C^2$  is called \textit{strongly self-concordant} if
			\begin{equation}
			\label{eq:strong-self-concordance}
			\nabla^2 f(y)-\nabla^2 f(x) \preceq \Lstrongly \|y-x\|_{z} \nabla^2 f(w), \quad \forall y,x,z,w \in \bbE .
			\end{equation} 
		\end{definition}
	\end{mdframed}
	For our analysis, we introduce definition for functions between self-concordant and strongly self-concordant.
	
	\begin{mdframed}[backgroundcolor=lightgray!50]
		\begin{definition} \label{def:semi-self-concordance}
			Convex function $f \in C^2$  is called \textit{semi-strongly self-concordant} if
			\begin{equation}
			\label{eq:semi-strong-self-concordance}
			\left\|\nabla^2 f(y)-\nabla^2 f(x)\right\|_{op} \leq \Lsemi \|y-x\|_{x} , \quad \forall y,x \in \bbE .
			\end{equation} 
		\end{definition}
	\end{mdframed}
	
	Note that all of the Definitions \ref{def:self-concordance} - \ref{def:semi-self-concordance} are affine-invariant, their respective classes satisfy
	\begin{gather*}
	\textit{strong self-concordance} \subseteq \textit{semi-strong self-concordance}
	\subseteq \textit{self-concordance}.
	\end{gather*}
	Also, for a fixed strongly self-concordant function $f$ and smallest such  $\Lstandard, 
	\Lsemi, \Lstrongly$ holds
	$ \Lstandard
	\leq \Lsemi \leq \Lstrongly. $

	These notions are related to the convexity and smoothness; strong concordance follows from function $L_2$-Lipschitz continuous Hessian and strong convexity.
	\begin{proposition}[Example 4.1 from \citep{rodomanov2021greedy}] \label{le:sscf}
		Let $\mathbf H: \mathbb{E} \rightarrow \mathbb{E}^{*}$ be a self-adjoint positive definite operator. Suppose there exist $\mu>0$ and $L_{2} \geq 0$ such that the function $f$ is $\mu$-strongly convex and its Hessian is $L_{2}$-Lipschitz continuous \eqref{eq:L2-smooth} with respect to the norm $\|\cdot\|_{\mathbf H}$. Then $f$ is strongly self-concordant with constant $\Lstrongly=\tfrac{L_{2}}{\mu^{3 / 2}}$. 
	\end{proposition}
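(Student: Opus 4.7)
The plan is to chain three elementary facts about the operator $\mathbf H$, the Hessian of $f$, and the norm induced by $\mathbf H$, and read off the constant $L_2/\mu^{3/2}$ at the end. I will take the two hypotheses in their standard operator forms. First, $L_2$-Lipschitz continuity of the Hessian with respect to $\|\cdot\|_{\mathbf H}$ translates into the two-sided operator bound
\[
-L_2\,\|y-x\|_{\mathbf H}\,\mathbf H \;\preceq\; \nabla^2 f(y)-\nabla^2 f(x) \;\preceq\; L_2\,\|y-x\|_{\mathbf H}\,\mathbf H,
\]
for all $x,y\in\mathbb{E}$. Second, $\mu$-strong convexity of $f$ with respect to $\|\cdot\|_{\mathbf H}$ is the uniform lower bound $\nabla^2 f(u)\succeq \mu\,\mathbf H$ for every $u\in\mathbb{E}$.

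The first step is to weaken $\mathbf H$ on the right-hand side in favour of $\nabla^2 f(w)$. From $\nabla^2 f(w)\succeq \mu\mathbf H$ (taking $u=w$) we invert the ordering to get $\mathbf H \preceq \tfrac{1}{\mu}\nabla^2 f(w)$, which plugged into the upper bound above yields
\[
\nabla^2 f(y)-\nabla^2 f(x) \;\preceq\; \tfrac{L_2}{\mu}\,\|y-x\|_{\mathbf H}\,\nabla^2 f(w).
\]
The second step is to replace the scalar factor $\|y-x\|_{\mathbf H}$ by $\|y-x\|_{z}$. Applying strong convexity at $u=z$ gives $\mathbf H \preceq \tfrac{1}{\mu}\nabla^2 f(z)$; pairing this against $y-x$ yields $\|y-x\|_{\mathbf H}^2 \leq \tfrac{1}{\mu}\|y-x\|_{z}^2$, hence $\|y-x\|_{\mathbf H}\leq \mu^{-1/2}\|y-x\|_{z}$.

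Substituting this into the previous display gives
\[
\nabla^2 f(y)-\nabla^2 f(x) \;\preceq\; \tfrac{L_2}{\mu^{3/2}}\,\|y-x\|_{z}\,\nabla^2 f(w),
\]
which is exactly \eqref{eq:strong-self-concordance} with $\Lstrongly = L_2/\mu^{3/2}$. There is no real obstacle here: the only thing to watch is bookkeeping of which reference operator is being used where, so that $\mathbf H$ is eliminated on both the norm side (using $z$) and the operator side (using $w$), producing the $\mu^{-3/2}$ factor by multiplying $\mu^{-1}$ from the operator replacement with $\mu^{-1/2}$ from the norm replacement.
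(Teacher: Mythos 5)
Your proof is correct. The paper itself imports this statement from \citet{rodomanov2021greedy} without reproducing a proof, and your argument is exactly the standard one from that source: translate the two hypotheses into the operator inequalities $-L_2\|y-x\|_{\mathbf H}\,\mathbf H \preceq \nabla^2 f(y)-\nabla^2 f(x) \preceq L_2\|y-x\|_{\mathbf H}\,\mathbf H$ and $\nabla^2 f(u)\succeq \mu\,\mathbf H$, then eliminate $\mathbf H$ on the operator side via $w$ (costing $\mu^{-1}$) and on the norm side via $z$ (costing $\mu^{-1/2}$), yielding \eqref{eq:strong-self-concordance} with $\Lstrongly = L_2/\mu^{3/2}$.
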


	\vspace{-0.2cm}
    \subsection{From assumptions to algorithm}
	\vspace{-0.2cm}
	From semi-strong self-concordance we can get a second-order bounds on the function and model.
	
	\begin{lemma} \label{le:model}
		If $f$ is semi-strongly self-concordant, then
		\begin{equation}
		\label{eq:lower_upper_bound}
		\left|f(y) - Q_f(y;x) \right|  \leq \tfrac{\Lsemi}{6}\|y-x\|_x^3, \quad \forall x,y\in \bbE.
		\end{equation}
		Consequently, we have upper bound for function value in form
		\begin{equation} \label{eq:wssc_ub}
		f(y) \leq Q_{f}(y;x) +\tfrac{\Lsemi}{6}\|y-x\|_{x}^3.
		\end{equation}
	\end{lemma}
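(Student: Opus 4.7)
The plan is to prove the two-sided bound in~\eqref{eq:lower_upper_bound} via the integral form of Taylor's remainder, controlling the Hessian variation with the semi-strong self-concordance assumption~\eqref{eq:semi-strong-self-concordance}. The upper bound~\eqref{eq:wssc_ub} is then an immediate consequence.

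First, I would write out the second-order Taylor expansion with integral remainder: for any $x,y\in\bbE$,
\[
f(y) \;=\; f(x) + \la \nabla f(x), y-x\ra + \int_0^1 (1-t)\,\la \nabla^2 f(x_t)(y-x),\, y-x\ra\, dt,
\]
where $x_t \eqdef x + t(y-x)$. Subtracting $Q_f(y;x)$, which absorbs the constant Hessian term $\tfrac12\la\nabla^2 f(x)(y-x), y-x\ra = \int_0^1 (1-t)\,\la\nabla^2 f(x)(y-x), y-x\ra dt$, gives
\[
f(y) - Q_f(y;x) \;=\; \int_0^1 (1-t)\,\bigl\la \bigl[\nabla^2 f(x_t)-\nabla^2 f(x)\bigr](y-x),\, y-x\bigr\ra\, dt.
\]

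Next I would bound the integrand pointwise. By the definition of the dual norm and the operator norm~\eqref{eq:matrix_operator_norm} taken at the base point $x$,
\[
\bigl|\bigl\la [\nabla^2 f(x_t)-\nabla^2 f(x)](y-x),\, y-x\bigr\ra\bigr|
\;\leq\; \bigl\|[\nabla^2 f(x_t)-\nabla^2 f(x)](y-x)\bigr\|_x^{\ast}\,\|y-x\|_x
\;\leq\; \bigl\|\nabla^2 f(x_t)-\nabla^2 f(x)\bigr\|_{op}\,\|y-x\|_x^{2}.
\]
Applying semi-strong self-concordance~\eqref{eq:semi-strong-self-concordance} with base point $x$ to $x_t = x + t(y-x)$ yields $\|\nabla^2 f(x_t)-\nabla^2 f(x)\|_{op} \le \Lsemi\,\|x_t - x\|_x = \Lsemi\, t\,\|y-x\|_x$.

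Plugging this estimate into the integral and using $\int_0^1 t(1-t)\,dt = 1/6$ gives
\[
\bigl|f(y) - Q_f(y;x)\bigr| \;\leq\; \Lsemi\,\|y-x\|_x^{3} \int_0^1 t(1-t)\,dt \;=\; \tfrac{\Lsemi}{6}\|y-x\|_x^{3},
\]
which is~\eqref{eq:lower_upper_bound}. The upper-bound statement~\eqref{eq:wssc_ub} follows by discarding the absolute value. The only subtle step is the compatibility between the operator norm (defined in~\eqref{eq:matrix_operator_norm} with respect to a fixed base point) and the semi-strong self-concordance inequality: both are anchored at the same point $x$ throughout, so everything matches without rescaling; this is the one place a reader might pause, but it is not a genuine obstacle.
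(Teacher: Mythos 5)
Your proof is correct and follows essentially the same route as the paper: both express the Taylor remainder in integral form, bound the Hessian variation via semi-strong self-concordance~\eqref{eq:semi-strong-self-concordance} anchored at $x$, and integrate to obtain the factor $\tfrac16$. The only cosmetic difference is that you use the single-integral remainder with weight $(1-t)$ while the paper uses the equivalent double integral $\int_0^1\int_0^\tau(\cdot)\,d\lambda\,d\tau$; these coincide after exchanging the order of integration.
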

	One can show that \eqref{eq:wssc_ub} is not valid for just self-concordant functions. For example, there is no such upper bound for $-\log(x)$. Hence, the semi-strongly self-concordance is significant as an assumption.
	
	We can define iterates of optimization algorithm to be minimizers of the right hand side of \eqref{eq:wssc_ub}, 
	\begin{mdframed} [backgroundcolor=lightgray!50]
    	\begin{gather} \label{eq:step}
    	S_{f,\Lalg}(x)\eqdef x + \argmin_{h\in \bbE} \left\lbrace f(x) + \la \nabla f(x),h\ra + \tfrac{1}{2} \la \nabla^2 f(x)h,h\ra +\tfrac{\Lalg}{6}\|h\|_{x}^3\right\rbrace,\\
    	\label{eq:S_step}
    	x_{k+1} = S_{f,\Lalg}(x_{k}),
    	\end{gather} 
	\end{mdframed}
	for an estimate constant $\Lalg \geq \Lsemi$. 
	It turns out that subproblem \eqref{eq:step} is easy to solve. To get an explicit solution, we compute its gradient w.r.t. $h$. For solution $h^{\ast}$, it should be equal to zero, 
	\begin{gather}
	 \nabla f(x) + \nabla^2 f(x)h^{\ast} +\tfrac{\Lalg}{2}\|h^{\ast}\|_{x}\nabla^2 f(x) h^{\ast} = 0
	 \label{eq:h_star_main},\\
	  h^{\ast} = -\left[\nabla^2 f(x)\right]^{-1}\nabla f(x)\cdot \left(\tfrac{\Lalg}{2}\|h^{\ast}\|_{x}+ 1\right)^{-1}. \label{eq:h_star2}
	\end{gather}
	We get that step \eqref{eq:S_step} has the same direction as a Newton method and is scaled by  $\alpha_k =\left(\tfrac{\Lalg}{2}\|h^{\ast}\|_{x}+ 1\right)^{-1}. $ 
	Now, we substitute $h^{\ast}$ from \eqref{eq:h_star2} to \eqref{eq:h_star_main}
	\begin{gather}
	 \nabla f(x) - \nabla f(x) \alpha_k  -\tfrac{\Lalg}{2}\left<\nabla f(x),\left[\nabla^2 f(x)\right]^{-1}\nabla f(x) \right>^{1/2} \nabla f(x) \alpha_k^2  = 0, \notag\\
	 \nabla f(x) \left(1- \alpha_k  -\tfrac{\Lalg}{2}\|\nabla f(x)\|_x^{\ast} \alpha_k^2\right)  = 0. \label{eq:h_quad_eq}
	\end{gather}
	We solve the quadratic equation \eqref{eq:h_quad_eq} for $\alpha_k$, and obtain explicit formula for stepsizes of \ain{}, as \eqref{eq:update}.
	We formalize this connection in theorem, for further explanation see proof in \Cref{sec:ap_theory}.
	\begin{mdframed}
		\begin{theorem} \label{th:stepsize}
			For $\Lalg \geq \Lsemi$, update of \ain{} \eqref{eq:update},
			\[x_{k+1} = x_k - \alpha_k \h(x_k)^{-1} \g(x_k), \qquad \text{where} \qquad \alpha_k = \tfrac {-1+\sqrt{1+2 \Lalg \normMd {\g(x_k)} {x_k}}}{\Lalg \normMd {\g(x_k)} {x_k}},\]
			is a minimizer of upper bound \eqref{eq:step}, $x_{k+1} = S_{f,\Lalg}(x_k)$.
		\end{theorem}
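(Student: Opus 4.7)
The plan is to argue that the minimizer of the cubic model in \eqref{eq:step} must lie along the Newton direction, and then solve a scalar equation to recover the explicit stepsize. First, I would verify that the objective $m(h) \eqdef \langle \nabla f(x),h\rangle + \frac12 \langle \nabla^2 f(x) h,h\rangle + \frac{\Lalg}{6}\|h\|_x^3$ is strictly convex (the quadratic term is strictly convex by positive-definiteness of $\nabla^2 f(x)$, and the cubic term is convex as a composition of the convex function $t\mapsto t^3$ on $\R_{\geq 0}$ with a norm), so the minimizer $h^\ast$ exists and is unique, and is characterized by the first-order condition
\begin{equation*}
\nabla f(x) + \nabla^2 f(x)\,h^\ast + \tfrac{\Lalg}{2}\|h^\ast\|_x \, \nabla^2 f(x)\,h^\ast = 0,
\end{equation*}
which is exactly \eqref{eq:h_star_main}.

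Second, I would rearrange this first-order condition by factoring out $\nabla^2 f(x)$ and inverting it, which yields
\begin{equation*}
h^\ast = -\bigl(\tfrac{\Lalg}{2}\|h^\ast\|_x + 1\bigr)^{-1}[\nabla^2 f(x)]^{-1}\nabla f(x),
\end{equation*}
showing that $h^\ast$ is proportional to the Newton direction. Setting $\alpha \eqdef (1+\tfrac{\Lalg}{2}\|h^\ast\|_x)^{-1}$ gives $h^\ast = -\alpha [\nabla^2 f(x)]^{-1}\nabla f(x)$, and taking the local Hessian norm yields $\|h^\ast\|_x = \alpha \,\|\nabla f(x)\|_x^\ast$.

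Third, I would substitute this identity back into the definition of $\alpha$ to obtain the quadratic
\begin{equation*}
1 - \alpha - \tfrac{\Lalg}{2}\|\nabla f(x)\|_x^\ast\, \alpha^2 = 0,
\end{equation*}
which is \eqref{eq:h_quad_eq}. Denoting $G \eqdef \Lalg\,\|\nabla f(x)\|_x^\ast$, the two roots are $\alpha = \frac{-1\pm\sqrt{1+2G}}{G}$; since we require $\alpha \in (0,1]$ (because $\alpha = 1/(1+\tfrac{\Lalg}{2}\|h^\ast\|_x)$ is positive and at most $1$), only the $+$ branch is admissible. Plugging this $\alpha$ into $h^\ast = -\alpha [\nabla^2 f(x)]^{-1}\nabla f(x)$ and writing $x_{k+1} = x_k + h^\ast$ produces precisely the update rule of \Cref{alg:ain}, proving $x_{k+1} = S_{f,\Lalg}(x_k)$.

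The only subtle point is the degenerate case $\nabla f(x_k) = 0$, i.e.\ $G=0$: then the first-order condition is satisfied by $h^\ast = 0$, and the limit $\lim_{G\to 0^+}\frac{-1+\sqrt{1+2G}}{G} = 1$ extends the formula continuously so that $x_{k+1} = x_k$, which is consistent (and, by uniqueness of the minimizer in \Cref{prop:unique_solution}, correct). The main obstacle is organizational rather than technical: carefully handling the scalar reduction via the substitution $\|h^\ast\|_x = \alpha\,\|\nabla f(x)\|_x^\ast$ and selecting the correct root of the quadratic so that $\alpha \in (0,1]$.
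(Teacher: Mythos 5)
Your proposal is correct and follows essentially the same route as the paper: derive the first-order condition \eqref{eq:h_star_main}, observe that the minimizer lies along the Newton direction, reduce to the scalar quadratic \eqref{eq:h_quad_eq}, and select the root in $(0,1]$. You are in fact slightly more careful than the paper's own verification-style proof, since you explicitly justify strict convexity of the model (so that stationarity implies global minimality), justify the choice of the $+$ branch, and handle the degenerate case $\nabla f(x_k)=0$.
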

	\end{mdframed}
	
\section{Convergence Results}	
	\vspace{-0.2cm}
	\subsection{Global convergence} \label{sec:global}
	\vspace{-0.2cm}
	Next, we focus on global convergence guarantees.
	We will utilize the following assumption:
		\begin{mdframed}
		\begin{assumption}[Bounded level sets] \label{as:level_sets}
			The objective function $f$ has a unique minimizer $x_{*}$.
			Also, the diameter of the level set $\mathcal{L}(x_0)\eqdef\lb x \in \bbE : f(x)\leq f(x_0) \rb$ is bounded by a constant $D_2$ as\footnote{We state it in $l_2$ norm for easier verification. In proofs, we use its variant $D$ in Hessian norms, \eqref{D_lebeg}.}, $\max\limits_{x\in\mathcal{L}(x_0)} \normM {x-x_{*}} 2 \leq D_2<+\infty$.
		\end{assumption}
	\end{mdframed}
	
	Our analysis proceeds as follows. Firstly, we show that one step of the algorithm decreases function value, and secondly, we use the technique from  \citep{ghadimi2017second} to show that multiple steps lead to $\cO\left ({k^{-2}} \right )$ global convergence.
	We start with following lemma.
	\begin{lemma}[One step globally] \label{lm:main_lemma}
		Let function $f$ be $\Lsemi$-semi-strongly self-concordant, convex with positive-definite Hessian and $\Lalg \geq \Lsemi$. Then for any $x \in  \bbE$,  we have
		\begin{equation} \label{eq:min_lemma}
		S_{f,\Lalg}(x) \leq \min \limits_{y \in \bbE} \left\{ f(y) + \tfrac{\Lalg}{ 3}\|y -x\|^3_x \right\}.
		\end{equation}
	\end{lemma}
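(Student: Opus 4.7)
The plan is to chain together the upper and lower bounds from Lemma \ref{le:model} with the fact that $S_{f,\Lalg}(x)$ is, by definition \eqref{eq:step}, the exact minimizer of the regularized quadratic model $M_x(y) \eqdef Q_f(y;x) + \tfrac{\Lalg}{6}\|y-x\|_x^3$. The inequality on the left-hand side is understood as $f(S_{f,\Lalg}(x)) \leq \cdots$, following the usual convention in the cubic-Newton literature.

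First I would apply the upper bound from \eqref{eq:lower_upper_bound} (valid because $\Lalg \geq \Lsemi$) at the point $S_{f,\Lalg}(x)$ to obtain
\[
f(S_{f,\Lalg}(x)) \;\leq\; Q_f(S_{f,\Lalg}(x);x) + \tfrac{\Lalg}{6}\|S_{f,\Lalg}(x)-x\|_x^3 \;=\; M_x(S_{f,\Lalg}(x)).
\]
Next, since $S_{f,\Lalg}(x)$ minimizes $M_x$ over $\bbE$, for any $y \in \bbE$ I have $M_x(S_{f,\Lalg}(x)) \leq M_x(y)$.

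Finally I would use the other side of Lemma \ref{le:model}, namely $Q_f(y;x) \leq f(y) + \tfrac{\Lalg}{6}\|y-x\|_x^3$, to bound
\[
M_x(y) \;=\; Q_f(y;x) + \tfrac{\Lalg}{6}\|y-x\|_x^3 \;\leq\; f(y) + \tfrac{\Lalg}{3}\|y-x\|_x^3.
\]
Chaining the three displays and taking the minimum over $y \in \bbE$ yields \eqref{eq:min_lemma}.

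There is no real obstacle here: the only subtle point is to notice that the two-sided form of \eqref{eq:lower_upper_bound} is exactly what is needed — the upper bound is applied at $y=S_{f,\Lalg}(x)$ to relate $f$ to the model, and the lower bound is applied at the arbitrary $y$ to relate the model back to $f$. The factor $\tfrac{\Lalg}{3}$ on the right is produced as $\tfrac{\Lalg}{6}+\tfrac{\Lalg}{6}$ from these two applications. The semi-strong self-concordance assumption is used only through Lemma \ref{le:model}; convexity and positive-definiteness of the Hessian are used only to guarantee that $M_x$ has a (unique) minimizer so that $S_{f,\Lalg}(x)$ is well defined.
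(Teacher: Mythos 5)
Your proposal is correct and is essentially identical to the paper's own proof: both apply the upper bound of Lemma \ref{le:model} at $S_{f,\Lalg}(x)$, use that $S_{f,\Lalg}(x)$ minimizes the cubic model to pass to an arbitrary $y$, and then apply the lower bound of \eqref{eq:lower_upper_bound} to get the $\tfrac{\Lalg}{6}+\tfrac{\Lalg}{6}=\tfrac{\Lalg}{3}$ coefficient. You also correctly read the statement's left-hand side as $f(S_{f,\Lalg}(x))$ and noted where $\Lalg \geq \Lsemi$ is needed.
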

	This lemma implies that step \eqref{eq:step} decreases function value (take $y \leftarrow x$). Using notation of \Cref{as:level_sets}, $x_k \in \mathcal{L}(x_0) $ for any $k\geq 0 $. Also, setting $y \leftarrow x$ and $x \leftarrow x_{\ast}$ in \eqref{eq:semi-strong-self-concordance} yields
	$\|x-x_{\ast}\|_x \leq \ls \|x-x_{\ast}\|_{x_{\ast}}^2 + \Lalg \|x-x_{\ast}\|_{x_{\ast}}^3 \rs^{\tfrac{1}{2}}.$
	We denote those distances $D$ and $R$,
	\begin{equation} \label{D_lebeg}
	\gboxeq{D \eqdef \max\limits_{t \in [0;k+1]} \|x_t - x_{\ast}\|_{x_t}}  \qquad \text{and} \qquad \gboxeq{R \eqdef \max\limits_{x\in \mathcal{L}(x_0)} \ls \|x-x_{\ast}\|_{x_{\ast}}^2 + \Lalg \|x-x_{\ast}\|_{x_{\ast}}^3 \rs^{\tfrac{1}{2}}.}
	\end{equation}
	
	They are both affine-invariant and 
	$R$ upper bounds $D$. While $R$ depends only on the level set $\mathcal L (x_0)$, $D$ can be used to obtain more tight inequalities. We avoid using common distance $D_2$, as $l_2$ norm would ruin affine-invariant properties.

	\begin{mdframed}
		\begin{theorem} \label{thm:convergence}
			Let $f(x)$ be a $\Lsemi$-semi-strongly self-concordant convex function with positive-definite Hessian, constant $\Lalg$ satisfy $\Lalg \geq \Lsemi$ and \Cref{as:level_sets} holds. Then, after $k+1$ iterations of Algorithm \ref{alg:ain}, we have the following convergence rate: 
			\begin{equation}
			f(x_{k+1}) - f(x_{\ast})\leq O\ls\tfrac{\Lalg D^{3}}{k^{2}}\rs\leq O\ls\tfrac{\Lalg R^{3}}{k^2}\rs.  \label{eq:convergence}
			\end{equation}	
		\end{theorem}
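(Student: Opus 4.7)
The plan is to combine Lemma \ref{lm:main_lemma} with a convex-combination test point to get a single-step decrease of $\delta_k \eqdef f(x_k) - f(\opt)$, then integrate the resulting nonlinear recursion to obtain the $O(1/k^2)$ rate. Throughout, the fact that $f(x_{k+1}) \leq f(x_k)$ (obtained from Lemma \ref{lm:main_lemma} by taking $y \leftarrow x_k$) keeps all iterates in $\mathcal L(x_0)$, so the distance $\|x_k - \opt\|_{x_k}$ stays bounded by $D$, and ultimately by $R$ via the bound shown just below \eqref{eq:min_lemma}.

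First I would apply Lemma \ref{lm:main_lemma} at $x = x_k$ with the test point $y = x_k + \tau(\opt - x_k)$ for an arbitrary $\tau \in [0,1]$. Convexity of $f$ yields $f(y) \leq f(x_k) - \tau \delta_k$, while $\|y - x_k\|_{x_k} = \tau \|x_k - \opt\|_{x_k} \leq \tau D$. Substituting, we obtain the pointwise inequality
\begin{equation*}
\delta_{k+1} \;\leq\; \delta_k - \tau \delta_k + \tfrac{\Lalg}{3}\tau^3 D^3, \qquad \forall \tau \in [0,1].
\end{equation*}
Minimizing the right-hand side over $\tau \in [0,1]$ gives the unconstrained optimum $\tau_\star = \sqrt{\delta_k/(\Lalg D^3)}$. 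When $\tau_\star \leq 1$ (the ``small-residual'' regime) plugging this in yields the key recursion
\begin{equation*}
\delta_{k+1} \;\leq\; \delta_k - \tfrac{2}{3\sqrt{\Lalg D^3}}\, \delta_k^{3/2}.
\end{equation*}
In the complementary ``large-residual'' regime $\delta_k > \Lalg D^3$, I would take $\tau = 1$ to get $\delta_{k+1} \leq \tfrac{\Lalg}{3} D^3$, which forces the iterate into the small-residual regime within a single step.

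Next I would convert the recursion into the advertised $1/k^2$ rate by the standard Ghadimi--Lan trick: substitute $u_k \eqdef 1/\sqrt{\delta_k}$, use $1/\sqrt{1-t} \geq 1 + t/2$ on $t = \tfrac{2}{3\sqrt{\Lalg D^3}} \sqrt{\delta_k}$, and telescope to obtain $u_{k+1} \geq u_k + \tfrac{1}{3\sqrt{\Lalg D^3}}$, hence $u_k \geq u_0 + \Omega(k/\sqrt{\Lalg D^3})$. Squaring gives $\delta_k = O(\Lalg D^3 / k^2)$, which is the first inequality in \eqref{eq:convergence}. The second inequality $D \leq R$ then follows directly from \eqref{D_lebeg} and the bound between it, since $x_k \in \mathcal L(x_0)$ for all $k$.

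The main obstacle is the potential mismatch between the constrained minimization $\tau \in [0,1]$ and the quadratic-like telescoping argument: one must cleanly separate the ``burn-in'' phase where $\delta_k > \Lalg D^3$ (and $\tau_\star > 1$) from the asymptotic phase, and verify that the burn-in lasts only $O(1)$ iterations so that it contributes only to the hidden constant. The rest — convexity of the test point, monotonicity of $f(x_k)$, the inequality $D \leq R$, and the telescoping of $u_k$ — is routine once Lemma \ref{lm:main_lemma} is in hand.
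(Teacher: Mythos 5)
Your proposal is correct, and it diverges from the paper's proof in how the one-step inequality is turned into a rate. Both arguments start identically: apply Lemma \ref{lm:main_lemma} at $x_k$ with the test point $y = x_k + \tau(\opt - x_k)$ and use convexity to get $\delta_{k+1} \leq (1-\tau)\delta_k + \tfrac{\Lalg}{3}\tau^3 D^3$ for all $\tau\in[0,1]$ (this is exactly the paper's \eqref{eq:conv_to_sum}). From there the paper fixes the deterministic schedule $\eta_t = \tfrac{3}{t+3}$, introduces the products $A_t=\prod_{i\le t}(1-\eta_i)$, divides by $A_t$, telescopes, and invokes Lemma \ref{le:sequence_bound} to bound $\sum_t A_k\eta_t^3/A_t$ — the Ghadimi--Lan estimating-sequence computation. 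You instead optimize $\tau$ per iteration, obtaining $\delta_{k+1}\leq \delta_k - \tfrac{2}{3\sqrt{\Lalg D^3}}\delta_k^{3/2}$ once $\delta_k\leq \Lalg D^3$ (with a one-step burn-in handled by $\tau=1$), and integrate this recursion via $u_k = 1/\sqrt{\delta_k}$, using $1/\sqrt{1-t}\geq 1+t/2$ and $u_k\sqrt{\delta_k}=1$ to get $u_{k+1}\geq u_k + \tfrac{1}{3\sqrt{\Lalg D^3}}$; squaring gives $\delta_k\leq 9\Lalg D^3/k^2$, which matches the paper's final constant exactly. This is the Nesterov--Polyak/Mishchenko style of resolving the recursion (the paper itself uses precisely this device, via Proposition 1 of \citet{mishchenko2021regularized}, in Appendix \ref{sec:ap_global_nsc}); your route is more self-contained since it avoids Lemma \ref{le:sequence_bound} and the product manipulations, at the cost of the explicit two-regime case split, while the paper's fixed schedule is the form that generalizes to accelerated/estimating-sequence constructions. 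Your handling of the remaining details — monotonicity of $f(x_k)$ keeping iterates in $\mathcal{L}(x_0)$ so that $D$ is controlled, the check that $c\sqrt{\delta_k}\le 2/3<1$ in the small-residual regime so the square-root expansion is valid, and $D\le R$ via the displayed bound below \eqref{eq:min_lemma} — is sound.
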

	\end{mdframed}
	Consequently, \ain{} converges globally with a fast rate $\cO \left( k^{-2} \right)$. We can now present local analysis.
	\vspace{-0.2cm}
	\subsection{Local convergence} \label{sec:local}
	\vspace{-0.2cm}
	For local quadratic convergence are going to utilise following lemmas.
	\begin{lemma} 
	\label{le:lsconv}
		For convex $\Lsemi$-semi-strongly self-concordant function $f$ and for any $0<c<1$ in the neighborhood of solution \begin{equation}
		\label{eq_first_neigborhood}
		 x_k \in \left\{ x :  \normMd {\g (x)} {x} \leq \tfrac{(2c+1)^2-1}{2\Lalg} \right\}\,\, \text{holds}\,\,  \h(x_{k+1})^{-1} \preceq \left( 1 - c \right)^{-2} \h(x_k)^{-1}. 
		 \end{equation}
	\end{lemma}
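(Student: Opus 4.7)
The plan is to control $\|x_{k+1}-x_k\|_{x_k}$ using the explicit stepsize formula, and then convert this displacement bound into a Hessian comparison via the classical Hessian estimate for self-concordant functions.

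For the displacement, a direct calculation using the dual-norm definition gives $\|[\h(x_k)]^{-1}\g(x_k)\|_{x_k} = \normMd{\g(x_k)}{x_k}$, so the update rule of \ain{} yields $\|x_{k+1}-x_k\|_{x_k} = \alpha_k \normMd{\g(x_k)}{x_k}$. Writing $G \eqdef \normMd{\g(x_k)}{x_k}$, the explicit stepsize simplifies to $\Lalg \alpha_k G = \sqrt{1+2\Lalg G}-1$, hence $\Lalg \|x_{k+1}-x_k\|_{x_k} = \sqrt{1+2\Lalg G} - 1$. The hypothesis $G \le ((2c+1)^2-1)/(2\Lalg)$ rearranges to $1 + 2\Lalg G \le (2c+1)^2$; taking square roots gives $\Lalg \|x_{k+1}-x_k\|_{x_k} \le 2c$.

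Next, letting $y\to x$ along a direction $h$ in \eqref{eq:semi-strong-self-concordance} and using the pairing inequality $|\la(\h(y)-\h(x))h,h\ra| \le \|\h(y)-\h(x)\|_{op}\,\|h\|_x^2$, one obtains $|D^3 f(x)[h]^3|\le\Lsemi\|h\|_x^3$; thus $f$ is classically $\Lsemi$-self-concordant in the sense of \Cref{def:self-concordance}, and in particular $\Lalg$-self-concordant since $\Lalg\ge\Lsemi$. The standard Hessian comparison for self-concordant functions (cf.\ \citet{nesterov2018lectures}) then yields, for any $y$ with $r\eqdef\tfrac{\Lalg}{2}\|y-x\|_x<1$,
\[(1-r)^2 \h(x) \preceq \h(y).\]
Applied with $x=x_k$ and $y=x_{k+1}$, the displacement bound produces $r\le c<1$, so $\h(x_{k+1})\succeq (1-c)^2\h(x_k)$; inverting this positive-definite inequality yields \eqref{eq_first_neigborhood}.

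The subtle point---and the main obstacle---is that a direct use of semi-strong self-concordance via its operator-norm interpretation only gives the linear bound $\h(y)\succeq(1-\Lalg\|y-x\|_x)\h(x)$, which translates to the looser factor $(1-2c)^{-1}$ in place of the claimed $(1-c)^{-2}$. Reaching the tighter quadratic-in-$(1-r)$ comparison, which is crucial for the subsequent local quadratic-rate argument, requires invoking the integrated third-derivative bound from classical self-concordance theory rather than the pointwise operator-norm inequality built into \Cref{def:semi-self-concordance}.
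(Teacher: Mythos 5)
Your proof is correct and follows essentially the same route as the paper: bound the displacement $\tfrac{\Lalg}{2}\|x_{k+1}-x_k\|_{x_k}=\tfrac{\Lalg}{2}\alpha_k\normMd{\g(x_k)}{x_k}\le c$ using the explicit stepsize, then invoke the classical Hessian comparison $(1-r)^2\h(x)\preceq\h(y)$ for self-concordant functions (Theorem 5.1.7 of \citet{nesterov2018lectures}, which applies since semi-strong self-concordance implies self-concordance) and invert. You additionally spell out the algebra linking the stated neighborhood to $r\le c$ and the reduction to classical self-concordance, both of which the paper leaves implicit.
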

	\Cref{le:lsconv} formalizes that a inverse hessians of a self-concordant function around the solution is non-degenerate.
	With this result, we can show one-step gradient norm decrease.
	
	\begin{lemma}[One step decrease locally] \label{le:one_step_local}
		Let function $f$ be $\Lsemi$-semi-strongly self-concordant and $\Lalg \geq \Lsemi$. If $x_k$ such that \eqref{eq_first_neigborhood} holds, then for next iterate $x_{k+1}$ of \ain{} holds
		\begin{equation} \label{eq:one_step_local}
		\normMd {\g(x_{k+1})} {x_k}
		\leq \Lalg \alpha_k^2 \normsMd{\g(x_k)} {x_k}
		< \Lalg \normsMd{\g(x_k)} {x_k}.
		\end{equation}
		Using \Cref{le:lsconv}, we shift the gradient bound to respective norms,
		\begin{equation} \label{eq:one_step_local_shifted}
		\normMd {\g(x_{k+1})} {x_{k+1}}
		\leq  \tfrac{\Lalg \alpha_k^2}{1-c} \normsMd{\g(x_k)} {x_k}
		< \tfrac{\Lalg \alpha_k^2}{1-c}\normsMd{\g(x_k)} {x_k}.
		\end{equation}
		Gradient norm decreases $\normMd{\g(x_{k+1})} {x_{k+1}} \leq \normMd{\g(x_k)} {x_k}$ for $\normMd{\g(x_k)} {x_k} \leq \tfrac {(2-c)^2-1} {2\Lalg }$.
	\end{lemma}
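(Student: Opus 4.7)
The plan is to bound $\nabla f(x_{k+1})$ by writing it via the fundamental theorem of calculus along the segment from $x_k$ to $x_{k+1}$, extracting the linear term $\nabla^2 f(x_k)(x_{k+1}-x_k)$, and then using semi-strong self-concordance \eqref{eq:semi-strong-self-concordance} to control the residual integral. The decisive identity is the one defining the stepsize, namely $1-\alpha_k=\tfrac{\Lalg}{2}\alpha_k^2 \normMd{\g(x_k)}{x_k}$, coming from the quadratic equation \eqref{eq:h_quad_eq} that $\alpha_k$ solves. This is what turns the linear term into a quadratic one in $\normMd{\g(x_k)}{x_k}$, and I expect this small bookkeeping step to be the main (and only nontrivial) technical point.

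Concretely, set $h \eqdef x_{k+1}-x_k = -\alpha_k[\h(x_k)]^{-1}\g(x_k)$, so that $\h(x_k)h = -\alpha_k \g(x_k)$ and $\|h\|_{x_k} = \alpha_k \normMd{\g(x_k)}{x_k}$. Writing
\[
\g(x_{k+1}) = \g(x_k) + \h(x_k)h + \int_0^1 \bigl[\h(x_k+th)-\h(x_k)\bigr]h\,dt = (1-\alpha_k)\g(x_k) + \int_0^1 \bigl[\h(x_k+th)-\h(x_k)\bigr]h\,dt,
\]
I would take the dual Hessian norm $\normMd{\cdot}{x_k}$ on both sides, apply the triangle inequality, and bound the integrand by the operator norm, using \eqref{eq:matrix_operator_norm} and then \eqref{eq:semi-strong-self-concordance}, which gives $\|[\h(x_k+th)-\h(x_k)]h\|_{x_k}^*\leq t\Lsemi\|h\|_{x_k}^2$. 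Integrating in $t$ yields a bound of $\tfrac{\Lsemi}{2}\|h\|_{x_k}^2 = \tfrac{\Lsemi}{2}\alpha_k^2\normsMd{\g(x_k)}{x_k}$. For the linear term, substituting the stepsize identity gives $\|(1-\alpha_k)\g(x_k)\|_{x_k}^{\ast} = \tfrac{\Lalg}{2}\alpha_k^2\normsMd{\g(x_k)}{x_k}$. Combining and using $\Lalg \geq \Lsemi$ produces exactly \eqref{eq:one_step_local}; the strict inequality then follows from $\alpha_k<1$ whenever $\g(x_k)\neq 0$ (as $\alpha_k$ equals $1$ only in the limit $\g(x_k)\to 0$).

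To pass from the norm at $x_k$ to the norm at $x_{k+1}$ in \eqref{eq:one_step_local_shifted}, I would invoke \Cref{le:lsconv}, whose hypothesis is exactly the neighborhood assumption \eqref{eq_first_neigborhood}. From $\h(x_{k+1})^{-1}\preceq (1-c)^{-2}\h(x_k)^{-1}$ and the definition of the dual Hessian norm \eqref{eq:hessian_norm}, one obtains $\normMd{v}{x_{k+1}} \leq (1-c)^{-1}\normMd{v}{x_k}$ for any $v\in\bbE^\ast$; applying this to $v=\g(x_{k+1})$ and chaining with \eqref{eq:one_step_local} yields the second inequality.

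Finally, for the last claim, I would simply substitute the hypothesis $\normMd{\g(x_k)}{x_k}\leq \tfrac{(2-c)^2-1}{2\Lalg}$ into the formula for $\alpha_k$ to check that $\tfrac{\Lalg\alpha_k^2}{1-c}\normMd{\g(x_k)}{x_k}\leq 1$, which turns the quadratic bound into a contraction. This is a routine computation once the explicit formula for $\alpha_k$ is expanded; the hypothesis on $\normMd{\g(x_k)}{x_k}$ is engineered precisely so that this inequality holds with equality at the stated boundary.
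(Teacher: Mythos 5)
Your proposal is correct and follows essentially the same route as the paper: the same decomposition $\g(x_{k+1})=(1-\alpha_k)\g(x_k)+\bigl(\g(x_{k+1})-\g(x_k)-\h(x_k)h\bigr)$, the same integral bound $\tfrac{\Lsemi}{2}\|h\|_{x_k}^2$ on the residual (which the paper isolates as a separate lemma), the same use of the stepsize identity $1-\alpha_k=\tfrac{\Lalg}{2}\alpha_k^2\normMd{\g(x_k)}{x_k}$ to absorb the linear term, and the same appeal to \Cref{le:lsconv} for the norm shift. The only cosmetic difference is that you inline the auxiliary gradient-approximation bound rather than citing it as a standalone lemma.
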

	As a result, neighbourhood of the local convergence is $\left\{x: \normMd{\g(x)} {x} \leq \min \left[\tfrac {(2-c)^2-1} {2\Lalg };\tfrac{(2c+1)^2-1}{2\Lalg}\right] \right\}.$
	Maximizing by $c$, we get $c=1/3$ and neighrbourhood $\left\{x: \normMd{\g(x)} {x} \leq \tfrac{8}{9\Lalg } \right\}.$ One step of \ain{} decreases gradient norm quadratically, multiple steps leads to following decrease.
	\begin{mdframed}
		\begin{theorem}[Local convergence rate] \label{th:local}
			Let function $f$ be $\Lsemi$-semi-strongly self-concordant, 
			$\Lalg \geq \Lsemi$ and starting point $x_0$ be in the neighborhood of the solution such that $\normMd{\g(x_{0})} {x_{0}} \leq \tfrac {8} {9\Lalg} $.
			For $k\geq 0$, we have quadratic decrease of the gradient norms,
			\begin{align}
			\normMd{\g(x_{k})}{x_{k}}
			&\leq \left( \tfrac {3} {2} \Lalg \right )^k 
			\left( \normMd{\g(x_{0})} {x_{0}} \right)^{2^k} .
			\end{align}
		\end{theorem}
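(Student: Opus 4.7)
The plan is induction on $k$, using the one-step contraction of Lemma~\ref{le:one_step_local} specialised to the parameter choice $c = 1/3$ flagged in the discussion immediately preceding the theorem.

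First, I would specialise the one-step lemma. With $c = 1/3$, the two neighbourhood thresholds $((2c+1)^2-1)/(2\Lalg)$ and $((2-c)^2-1)/(2\Lalg)$ coincide at the common value $\tfrac{8}{9\Lalg}$ that appears in the hypothesis on $x_0$. Combining the universal bound $\alpha_k \leq 1$ with $1-c = 2/3$, the contraction prefactor is at most $\Lalg \alpha_k^2/(1-c) \leq \tfrac{3}{2}\Lalg$, giving the clean quadratic bound
$$\|\nabla f(x_{k+1})\|_{x_{k+1}}^* \;\leq\; \tfrac{3}{2}\Lalg \,\bigl(\|\nabla f(x_k)\|_{x_k}^*\bigr)^2.$$

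Second, I would verify that each iterate remains in the neighbourhood so that the above bound is applicable at every step. The base case is the theorem's assumption on $x_0$. For the inductive step, I would invoke the monotonicity conclusion of Lemma~\ref{le:one_step_local}, which gives $\|\nabla f(x_{k+1})\|_{x_{k+1}}^* \leq \|\nabla f(x_k)\|_{x_k}^*$ whenever $\|\nabla f(x_k)\|_{x_k}^* \leq \tfrac{8}{9\Lalg}$; this keeps the whole sequence in the neighbourhood and supplies the inductive hypothesis for the next step.

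Third, I would iterate the quadratic contraction. Writing $M = \tfrac{3}{2}\Lalg$ and $g_k = \|\nabla f(x_k)\|_{x_k}^*$, multiplying the one-step bound by $M$ transforms it into $M g_{k+1} \leq (M g_k)^2$, which unfolds to $M g_k \leq (M g_0)^{2^k}$; rearranging produces the stated quadratic decay of the gradient norm.

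The main obstacle I anticipate is the second step. The naive bound $g_{k+1} \leq M g_k^2$ at the boundary $g_k = \tfrac{8}{9\Lalg}$ evaluates to $\tfrac{32}{27\Lalg}$, which exceeds the threshold, so neighbourhood preservation does \emph{not} follow from the quadratic contraction alone. Preservation must instead be read off from the sharper monotonicity inside Lemma~\ref{le:one_step_local}, which exploits that $\alpha_k$ is strictly less than $1$ when $g_k$ is near the threshold: this is the single place where the explicit AICN step-size formula is essential, and everything else is a routine unrolling of a self-reinforcing quadratic recursion.
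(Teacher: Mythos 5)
Your proposal follows essentially the same route as the paper: specialise Lemma~\ref{le:one_step_local} to $c=1/3$, use its monotonicity clause (not the quadratic bound) to keep every iterate inside $\{\normMd{\g(x)}{x}\leq \tfrac{8}{9\Lalg}\}$, and then chain the contraction $\normMd{\g(x_{k+1})}{x_{k+1}}\leq \tfrac{3}{2}\Lalg\,\alpha_k^2\,\normsMd{\g(x_k)}{x_k}$ with $\alpha_k\leq 1$. Your observation that neighbourhood preservation cannot come from the quadratic bound alone (since $\tfrac{3}{2}\Lalg\cdot(\tfrac{8}{9\Lalg})^2=\tfrac{32}{27\Lalg}>\tfrac{8}{9\Lalg}$) is exactly the right point and is precisely how the paper argues. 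One caveat on your final step: unrolling $Mg_{k+1}\leq (Mg_k)^2$ gives $g_k\leq M^{2^k-1}(g_0)^{2^k}$, which is not literally the stated bound $M^k(g_0)^{2^k}$ (the two coincide only when $M\leq 1$), so "rearranging produces the stated decay" is not quite accurate; the paper's own chaining has the same discrepancy between what the recursion yields and the exponent $k$ written in the theorem, so this is a shared imprecision rather than a defect of your argument, but you should state the unrolled form you actually obtain.
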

	\end{mdframed}
	\vspace{-0.2cm}
	\section{Numerical Experiments} \label{sec:experiments}
	\vspace{-0.2cm}
	In this section, we evaluate proposed \ain{} (\Cref{alg:ain}) algorithm on the logistic regression task and second-order lower bound function. 
	We compare it with regularized Newton methods sharing fast global convergence guarantees: Cubic Newton method \citep{nesterov2006cubic}, and Globally Regularized Newton method (\citep{mishchenko2021regularized,doikov2021optimization}) with $L_2$-constant. Because \ain{} has a form of a damped Newton method, we also compare it with Damped Newton with fixed (tuned) stepsize $\alpha_k=\alpha$. We report decrease in function value $f(x_k)$, function value suboptimality $f(x_k)-f(x_*)$ with respect to iteration and time. The methods are  implemented as PyTorch optimizers. The code is available at \url{https://github.com/OPTAMI}. 
	
\subsection{Logistic regression} 
\vspace{-0.2cm}
	For first part, we solve the following empirical risk minimization problem:
	\[ \textstyle
	\min \limits_{x \in \mathbb{R}^d} \left\{ f(x) = \frac{1}{m} \sum \limits_{i=1}^m  \log\left(1-e^{-b_i a_i^\top x}\right) + \frac{\mu}{2} \|x\|_2^2 \right\},
	\]
	where $\{(a_i, b_i)\}_{i=1}^m$ are given data samples described by features $a_i$ and class $b_i \in \{-1, 1\}$. 
	
	In \Cref{fig:a9a}, we consider task of classification images on dataset \emph{a9a} \citep{chang2011libsvm}. Number of features for every data sample is $d=123$, $m=20000$. We take staring point $x_0 \eqdef 10 [1,1, \dots, 1]^\top$ and $\mu = 10^{-3}$. Our choice is differ from $x_0=0$ (equal to all zeroes) to show globalisation properties of methods ($x_0=0$ is very close to the solution, as  Newton method converges in 4 iterations). Parameters of all methods are fine-tuned, we choose parameters $\Lalg,L_2, \alpha$ (of \ain{}, Cubic Newton, Damped Newton, resp.) to largest values having monotone decrease in reported metrics. Fine-tuned values are $\Lalg = 0.97$ $L_2= 0.000215$, $\alpha = 0.285$.
	\Cref{fig:a9a} demonstrates that \ain{} converges slightly faster than Cubic Newton method by iteration, notably faster than Globally Regularized Newton and significantly faster than Damped Newton. \ain{} outperforms every method by time.
   \begin{figure}[t]
	\includegraphics[width=0.32\linewidth]{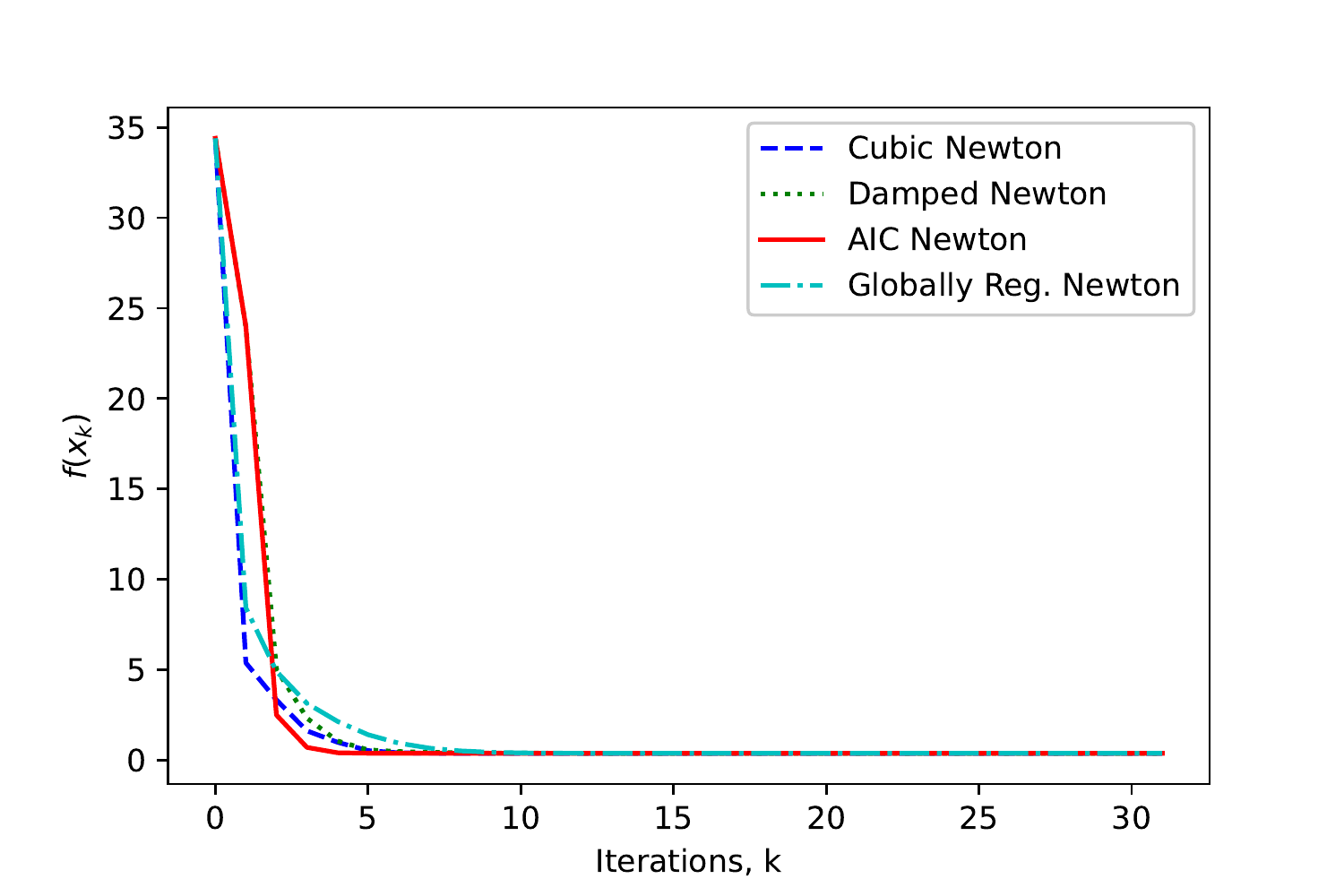}
	\includegraphics[width=0.32\linewidth]{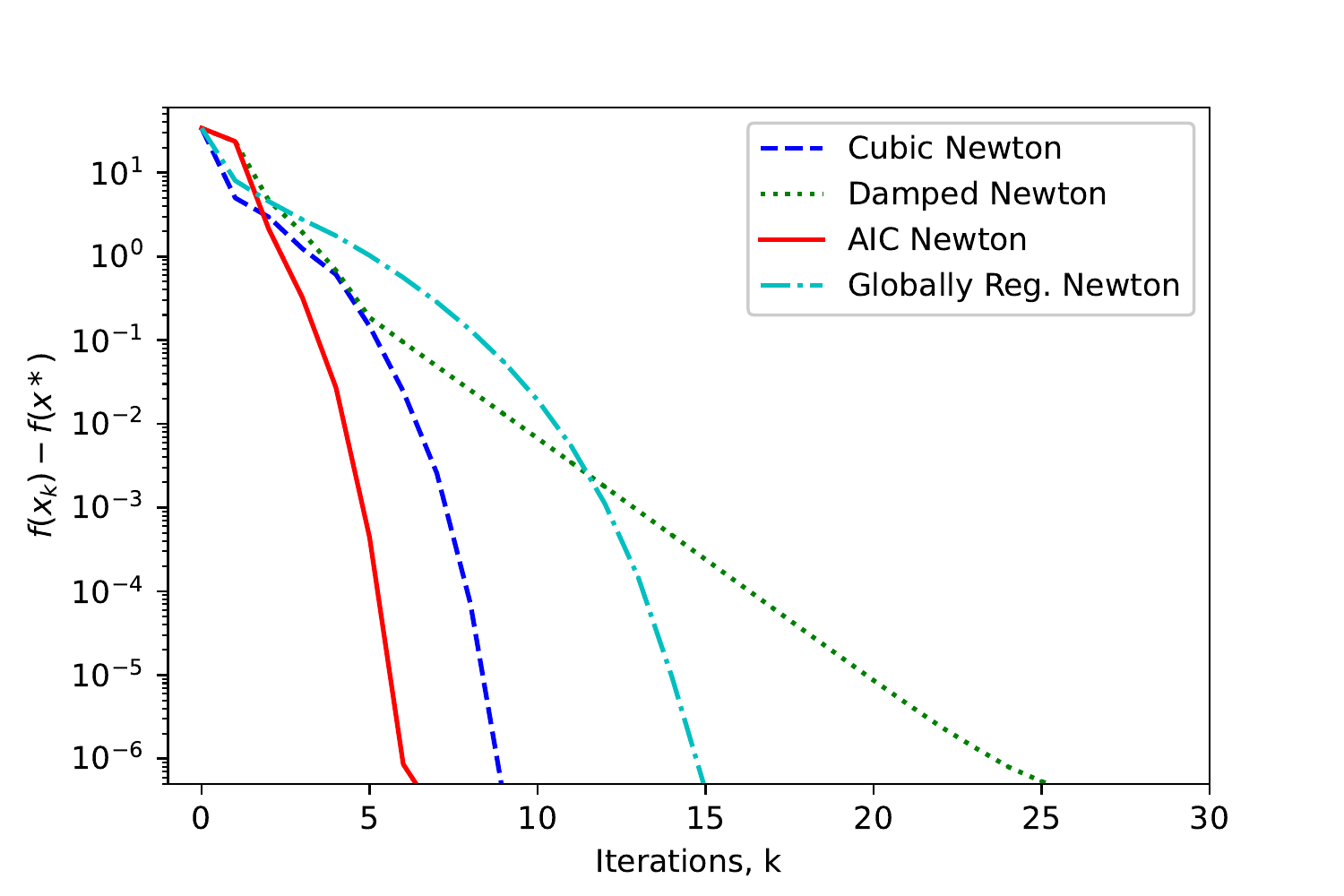}
	\includegraphics[width=0.32\linewidth]{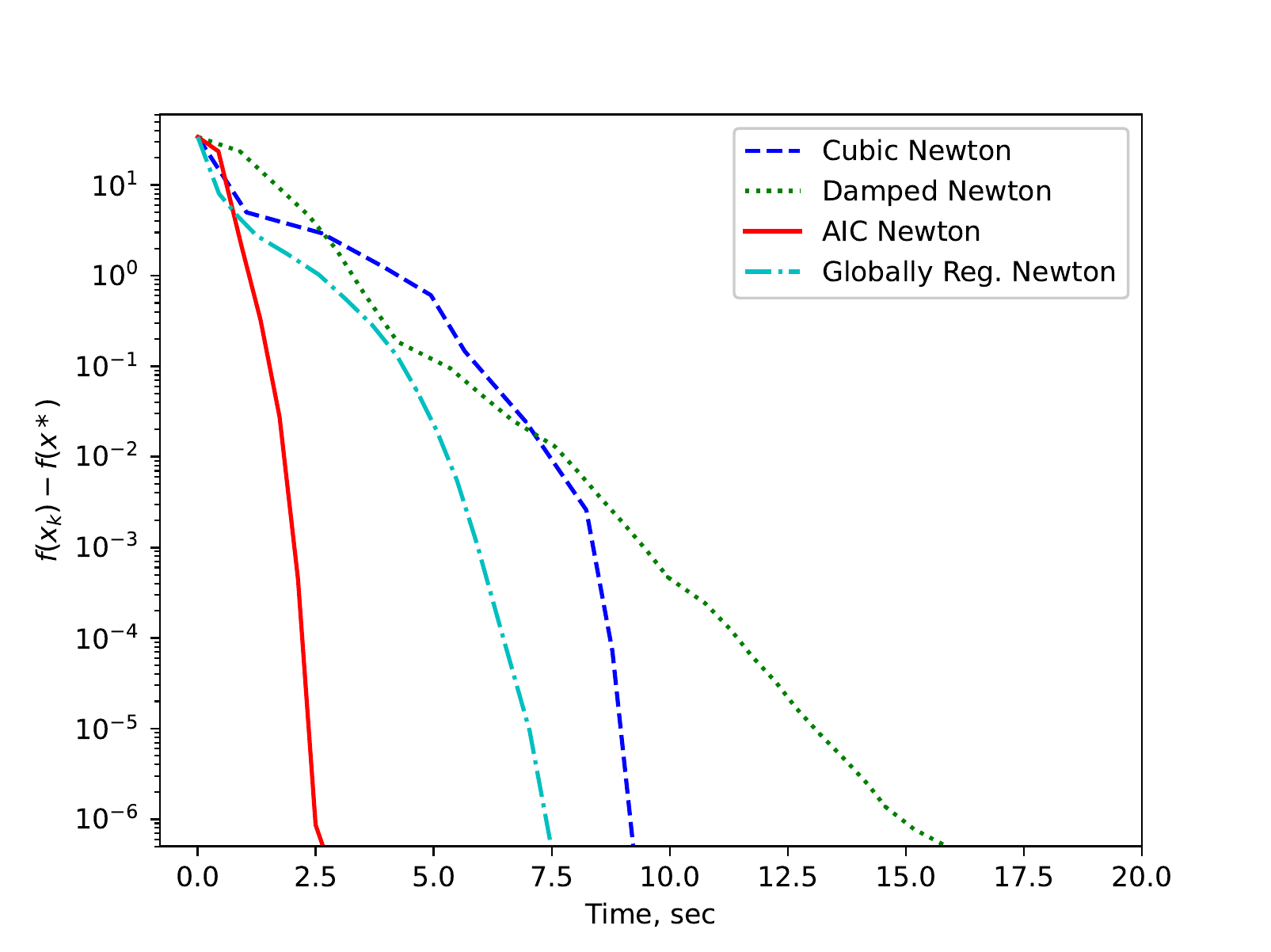}
	\caption{Comparison of regularized Newton methods and Damped Newton method for logistic regression task on  \emph{a9a} dataset.}
	\label{fig:a9a}
	\vspace{-0.5cm}
	\end{figure}
	
	\begin{figure}
	
	    \vspace{-0.35cm}
		\includegraphics[width=0.32\linewidth]{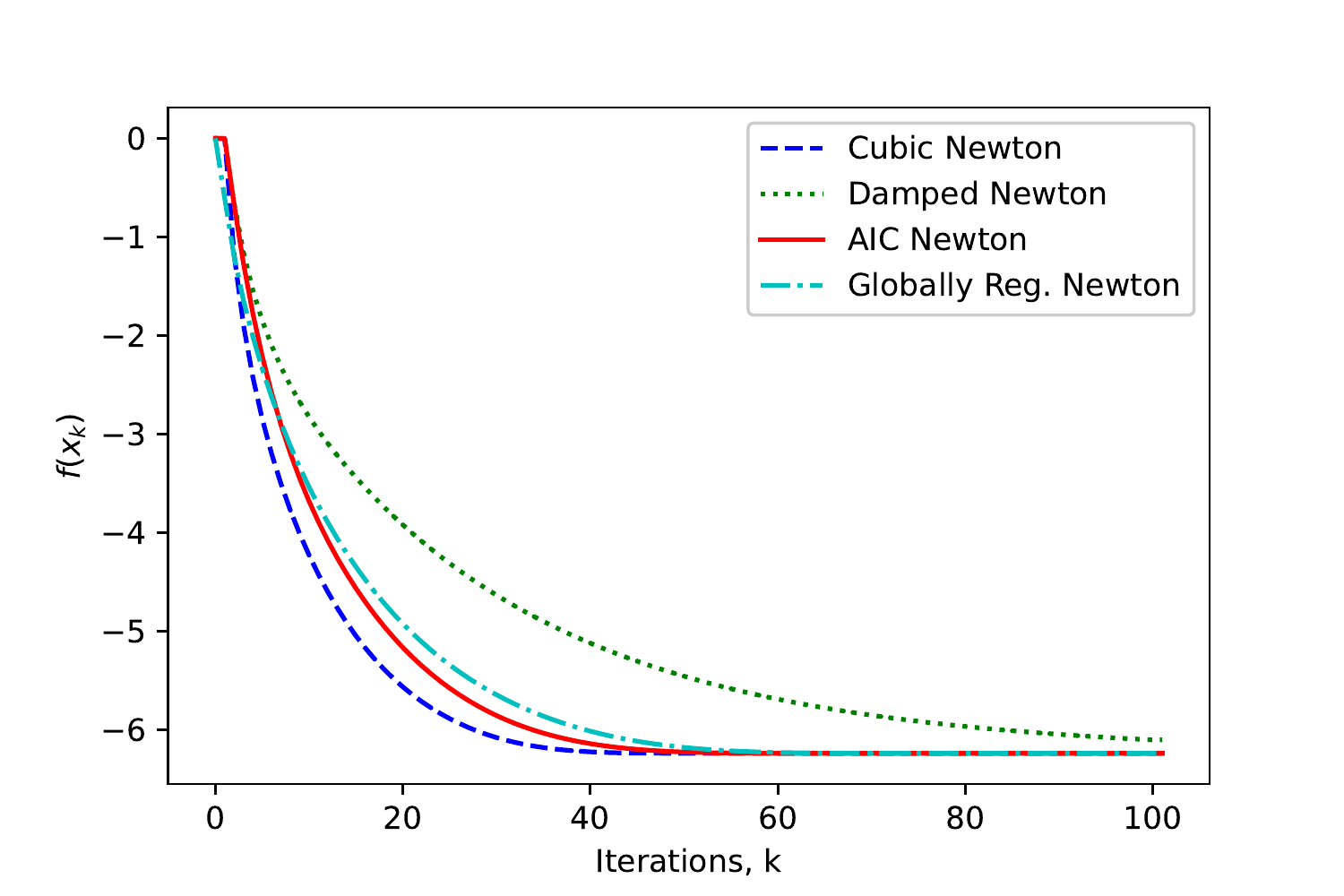}
		\includegraphics[width=0.32\linewidth]{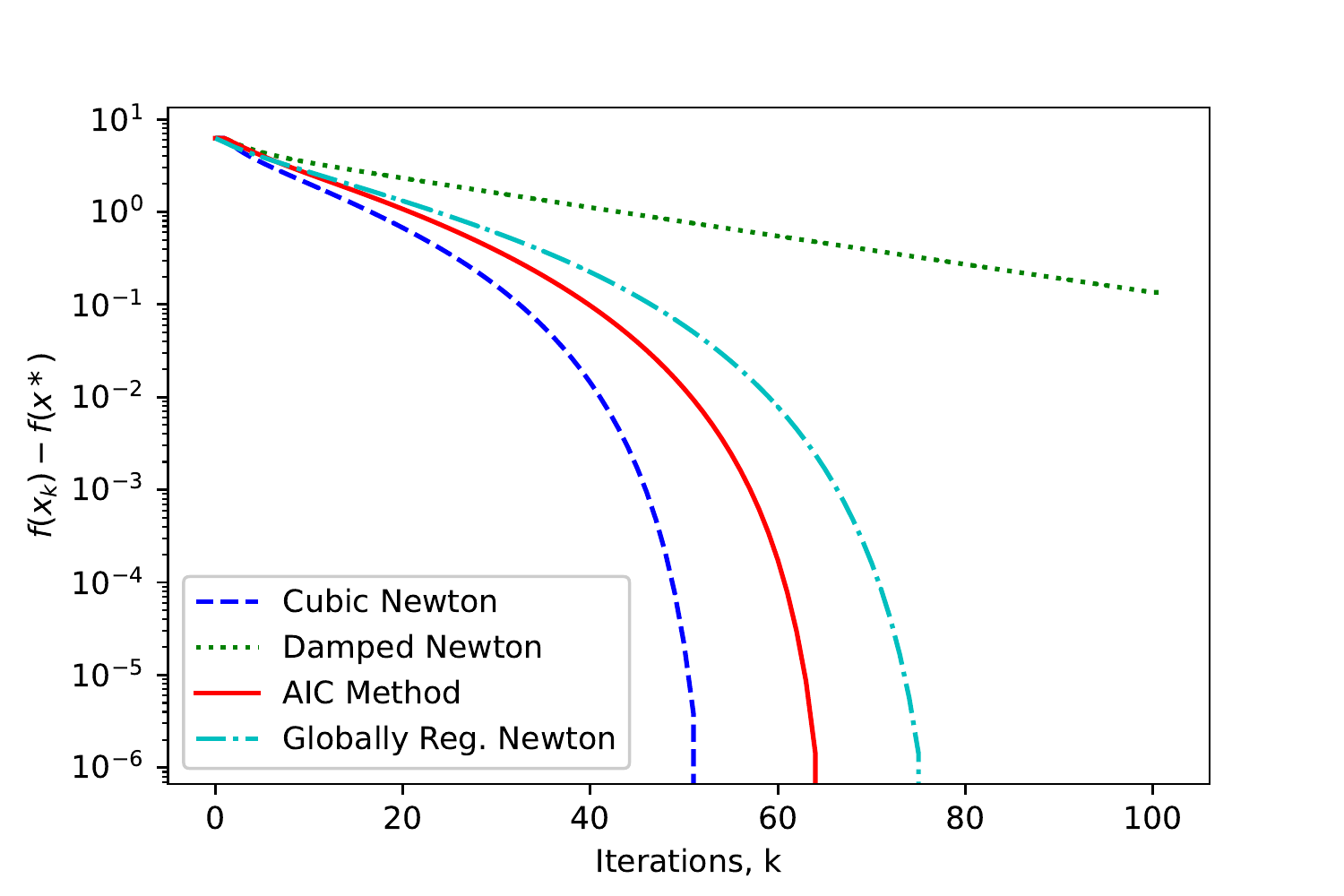}
		\includegraphics[width=0.29\linewidth]{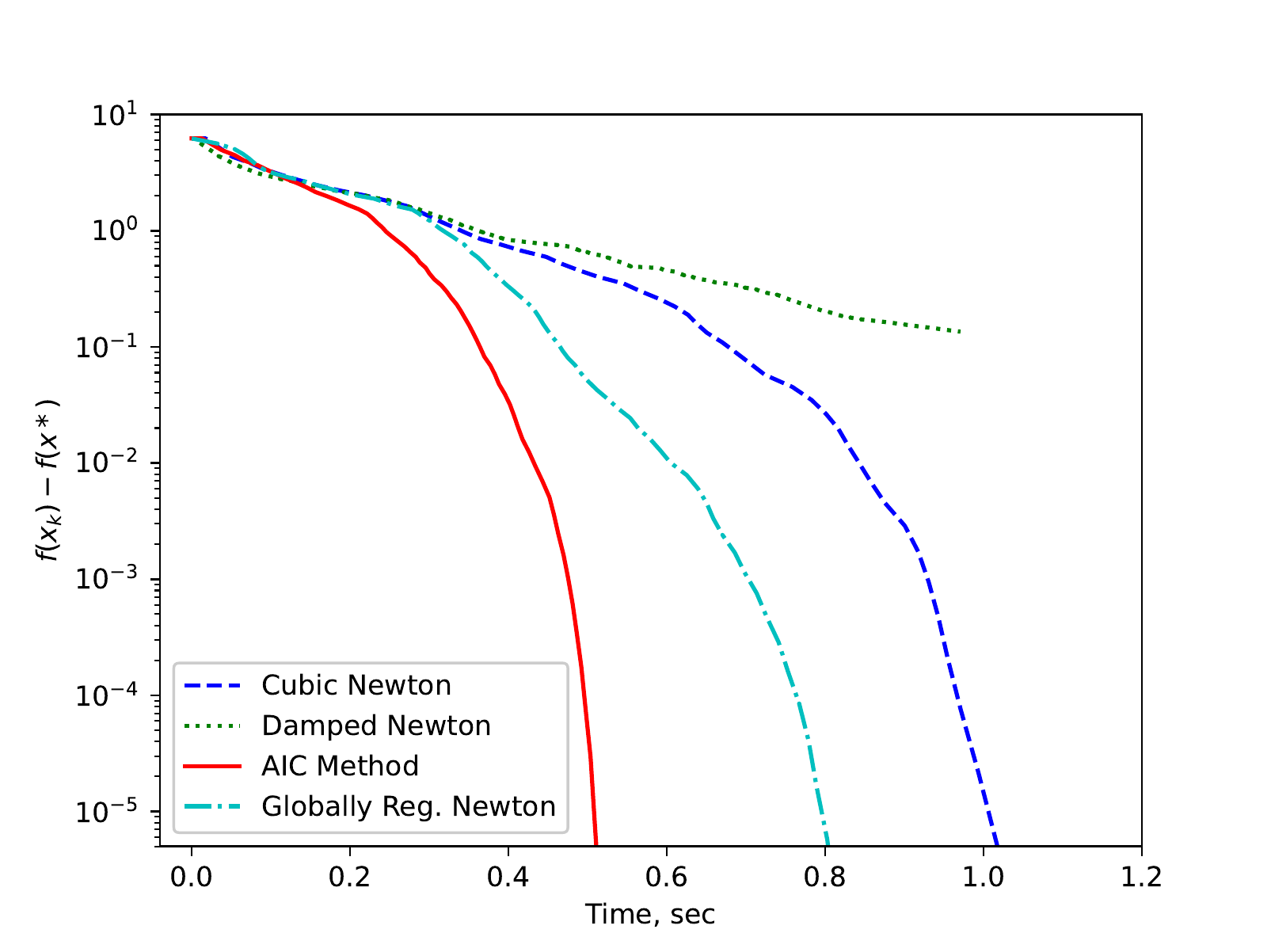}
		\caption{Comparison of regularized Newton methods and Damped Newton method for \emph{second-order lower bound function}.}
		\label{fig:lower_bound}
		\vspace{-0.7cm}
	\end{figure} 	
\vspace{-0.2cm}	
\subsection{Second-order lower bound function}
\vspace{-0.2cm}
	For second part we solve the following minimization problem:
	\[
	\min_{x \in \mathbb{R}^d} \left\{ f(x) = \tfrac{1}{d} \sum_{j=1}^d  \left| \left[\mathbf A x\right]_j\right|^3 - x_1 + \tfrac{\mu}{2} \|x\|_2^2 \right\}, \quad \text{where } \quad 
	\mathbf A = \begin{pmatrix}
        1 & -1 & 0  &\dots & 0 \\
        0 & 1 & -1  &\dots & 0\\ 
        & & \dots & \dots &  \\
        0 & 0  &\dots & 0  & 1
        \end{pmatrix}.
	\]
	This function is a lower bound for a class of functions with Lipschitz continuous Hessian \eqref{eq:L2-smooth} with additional regularization \citep{nesterov2006cubic, nesterov2021implementable}. 
	In \Cref{fig:lower_bound}, we take $d=20$, $x_0 = 0$ (equal to all zeroes). Parameters $\Lalg,L_2, \alpha$ are fine-tuned to largest values having monotone decrease in reported metrics: $\Lalg = 662$ $L_2= 0.662$, $\alpha = 0.0172$. 
    \Cref{fig:lower_bound} demonstrates that \ain{} converges slightly slower than Cubic Newton method, slightly faster than Globally Regularized Newton, and significantly faster than Damped Newton. ain{} outperforms every method by time. More experiments are presented in \Cref{sec:ap_experiments}. Note, that the iteration of the Cubic Newton method needs an additional line-search, so one iteration of Cubic Newton is computationally harder than one iteration of \ain{}. More experiments are presented in \Cref{sec:ap_experiments}.
	\vspace{-0.2cm}
	\subsection*{Acknowledgement}
	\vspace{-0.2cm}
	The work of D. Pasechnyuk and A. Gasnikov was supported by a grant for research centers in the field of artificial intelligence, provided by the Analytical Center for the Government of the RF in accordance with the subsidy agreement (agreement identifier 000000D730321P5Q0002) and the agreement with the Ivannikov Institute for System Programming of the RAS dated November 2, 2021 No. 70-2021-00142.

	\bibliographystyle{plainnat}
	\bibliography{AICNewton}

\clearpage	
\part*{Appendix}
	\appendix

	\section{Extra comparisons to other methods} \label{sec:ap_experiments}
	
	\subsection{Damped Newton method stepsize comparison}
	In \Cref{fig:stepsizes}, we present comparison of stepizes of \ain{} with other damped Newton methods \citep{nesterov2018lectures}. Our algorithm uses stepsize bigger by orders of magnitude. For reader's convenience, we repeat stepsize choices. For \ain{} stepsize is $\alpha = \frac {-1 + \sqrt{1+ 2G}}{G}$, where $G \eqdef \Lsemi \normMd {\g(x_k)} {x_k}$. For damped Newton methods from \citet{nesterov2018lectures}, $\alpha_1 \eqdef \frac 1 {1 + G_1}, \alpha_2 \eqdef \frac {1+ G_1} {1 + G_1 + G_1^2}$, where $G_1 \eqdef \Lstandard \normMd {\g(x_k)} {x_k}$. 
	
	\begin{figure}
		\centering
		\includegraphics[width=0.49\textwidth]{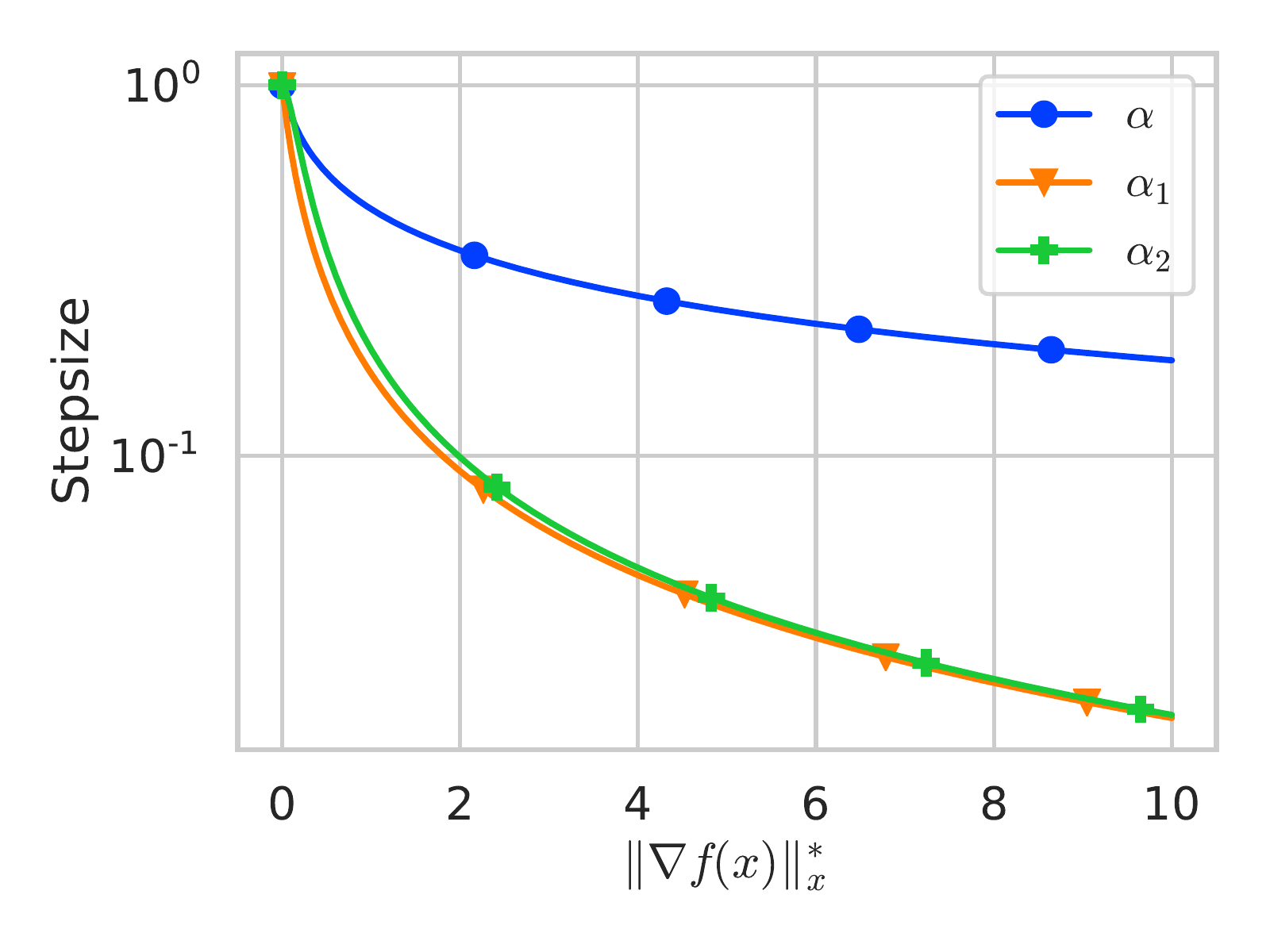}
		\includegraphics[width=0.49\textwidth]{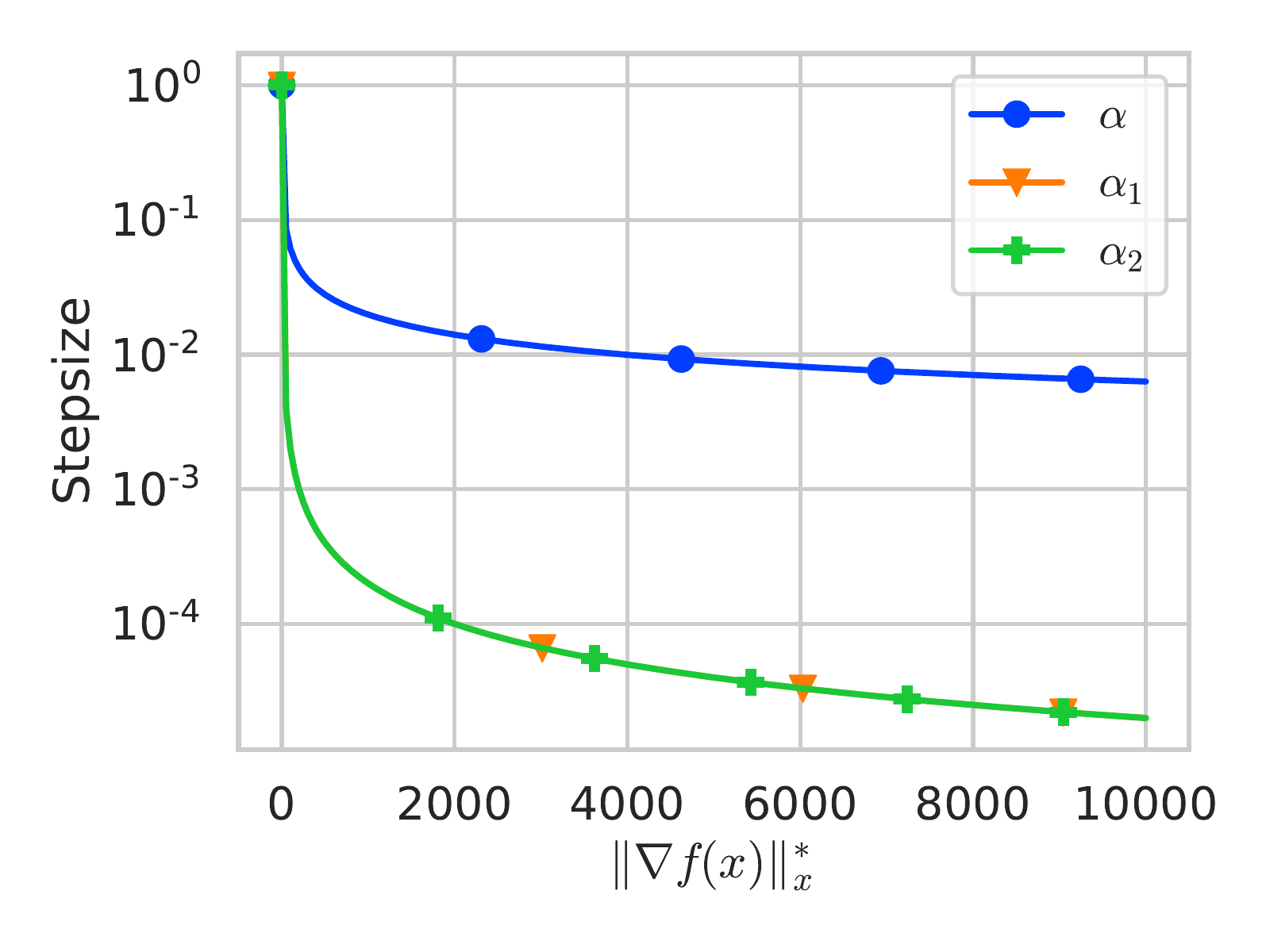}
		\caption{Comparison of stepsizes of affine-invariant damped Newton methods with quadratic local convergence. We compare \ain{} (blue), and stepsizes from \citet{nesterov2018lectures} in orange and green. We set $\Lsemi =\Lstandard=5.$}
		\label{fig:stepsizes}
	\end{figure}
	
	\subsection{Convergence rate comparison under various assumptions}
	In this subsection we present \Cref{tab:rates} -- comparison of \ain{} to regularized Newton methods under different smoothness and convexity assumptions. 
	
	\begin{table*}[!t]
		\centering
		\setlength\tabcolsep{0pt} 
		\begin{threeparttable}[b]
			{\scriptsize
				\renewcommand\arraystretch{2.2}
				\caption{
				Convergence guarantees under different versions of convexity and smoothness assumptions. For simplicity, we disregard dependence on bounded level set assumptions. All compared assumptions are considered for $\forall x,h \in \R^d$. We highlight the best know rates in blue.}
				\label{tab:rates}
				\centering 
				\begin{tabular}{ccccccc}\toprule[.1em]
					\bf Algorithm & \bf  \makecell{Strong \\ convexity \\ constant}  &\bf  \makecell{Smoothness assumption} &\bf  \makecell{Global \\ convergence \\ rate} & \bf \makecell{Local \tnote{\color{red}(1)} \\convergence\\exponent} & \bf Reference \\
					\midrule
					\makecell{Damped Newton B} &$0$\tnote{\color{red}(2)} & self-concordance (\Cref{def:self-concordance}) & $\cO\left( k^{-\frac 12} \right)$ & {\color{blue} $ 2$} &  \citet[$(5.1.28)$]{nesterov2018lectures} \\
					\makecell{Damped Newton C} &$0$\tnote{\color{red}(2)} & self-concordance (\Cref{def:self-concordance}) & \xmark & {\color{blue} $ 2$} &  \citet[$(5.2.1)_C$]{nesterov2018lectures} \\
				    \makecell{Cubic Newton } & $\mu$ & Lipschitz-continuous Hessian \eqref{eq:L2-smooth} & \makecell{$\cO\left( {\color{blue}{ k^{-2}} } \right) $} & \makecell{{\color{blue} $2$}} &   \makecell{\citet{nesterov2006cubic}\\ \citet{doikov2021local}} \\
				    \makecell{Cubic Newton } & $\mu$-star-convex & Lipschitz-continuous Hessian \eqref{eq:L2-smooth} & \makecell{$\cO\left( {\color{blue}{ k^{-2}} } \right) $} & $\frac 32$ &   \citet{nesterov2006cubic} \\
				    \makecell{Cubic Newton } & \makecell{non-convex, \\ bounded below} & Lipschitz-continuous Hessian \eqref{eq:L2-smooth} & \makecell{$\cO\left( {\color{blue}{ k^{-\frac 23}} } \right) $} & \xmark &   \citet{nesterov2006cubic} \\
				    \makecell{Globally Reg. Newton} & $\mu$ & Lipschitz-continuous Hessian \eqref{eq:L2-smooth}
				    & \makecell{$\cO\left( {\color{blue}{ k^{-2}} } \right) $} & \makecell{$\frac 32$} &   \makecell{\citet{mishchenko2021regularized} \\ \citet{doikov2021optimization}} \\
				    \makecell{Globally Reg. Newton} & $0$ & Lipschitz-continuous Hessian \eqref{eq:L2-smooth}
				    & \makecell{$\cO\left( {\color{blue}{ k^{-2}} } \right) $} & \xmark &   \makecell{\citet{mishchenko2021regularized} \\ \citet{doikov2021optimization}} \\
					\midrule
					\makecell{\textbf{AIC Newton}} & $0$\tnote{\color{red}(2)} & semi-strong self-concordance (\Cref{def:semi-self-concordance}) & \makecell{$\cO\left( {\color{blue}{ k^{-2}} } \right) $} & \makecell{{\color{blue} $2$}} &   Theorems \ref{thm:convergence}, \ref{th:local} \\
					\makecell{\textbf{AIC Newton}} & $\mu$ & $f(x+h) - f(x) \leq \ip{\g(x)} {h} + \tfrac 1 2 \normM h x ^2 + \tfrac \Lalt 6 \normM h x ^3$ & \makecell{$\cO\left( {\color{blue}{ k^{-2}} } \right) $} & \makecell{{\color{blue} $2$}} & Theorems \ref{th:global}, \ref{th:local} \tnote{\color{red}(3)} \\
					\makecell{\textbf{AIC Newton}} & $0$ & $f(x+h) - f(x) \leq \ip{\g(x)} {h} + \tfrac 1 2 \normM h x ^2 + \tfrac \Lalt 6 \normM h x ^3$ & \makecell{$\cO\left( {\color{blue}{ k^{-2}} } \right) $ \tnote{\color{red}(4)}} & \xmark & \Cref{th:global} \\
					\bottomrule[.1em]
				\end{tabular}
			}
			\begin{tablenotes}
				{\scriptsize
					\item [\color{red}(1)] For a Lyapunov function $\Phi$ and a constant $c>0$, we report exponent $\beta>1$ of $\Phi(x_{k+1}) \leq c \Phi (x_k)^\beta$. Mark \xmark{} means that such $\beta, c, \Phi$ are not known.
					\item [\color{red}(2)] Self-concordance implies strong convexity locally.
					\item [\color{red}(3)] Under strong convexity, we can prove local convergence analogically to \Cref{th:local}.
					\item [\color{red}(4)] Convergence to a neighborhood of the solution
				}
			\end{tablenotes}
		\end{threeparttable}
	\end{table*}
	
    \subsection{Logistic regression experiments}
	We solve the following minimization problem:
	\[
	\min_{x \in \mathbb{R}^d} \left\{ f(x) \eqdef \frac{1}{m} \sum_{i=1}^m  \log\left(1-e^{-b_i a_i^\top x}\right) + \frac{\mu}{2} \|x\|_2^2 \right\}.
	\]
	To make problem and data balanced, we normalise each data point and get $\|a_i\|_2=1$ for every $i \in [1,\ldots, m]$. Parameters of all methods are fine-tuned to get the fastest convergence. Note, that it is possible that for bigger $L$ method converges faster in practice.
	
	\begin{figure}[!h]
	\includegraphics[width=0.32\linewidth]{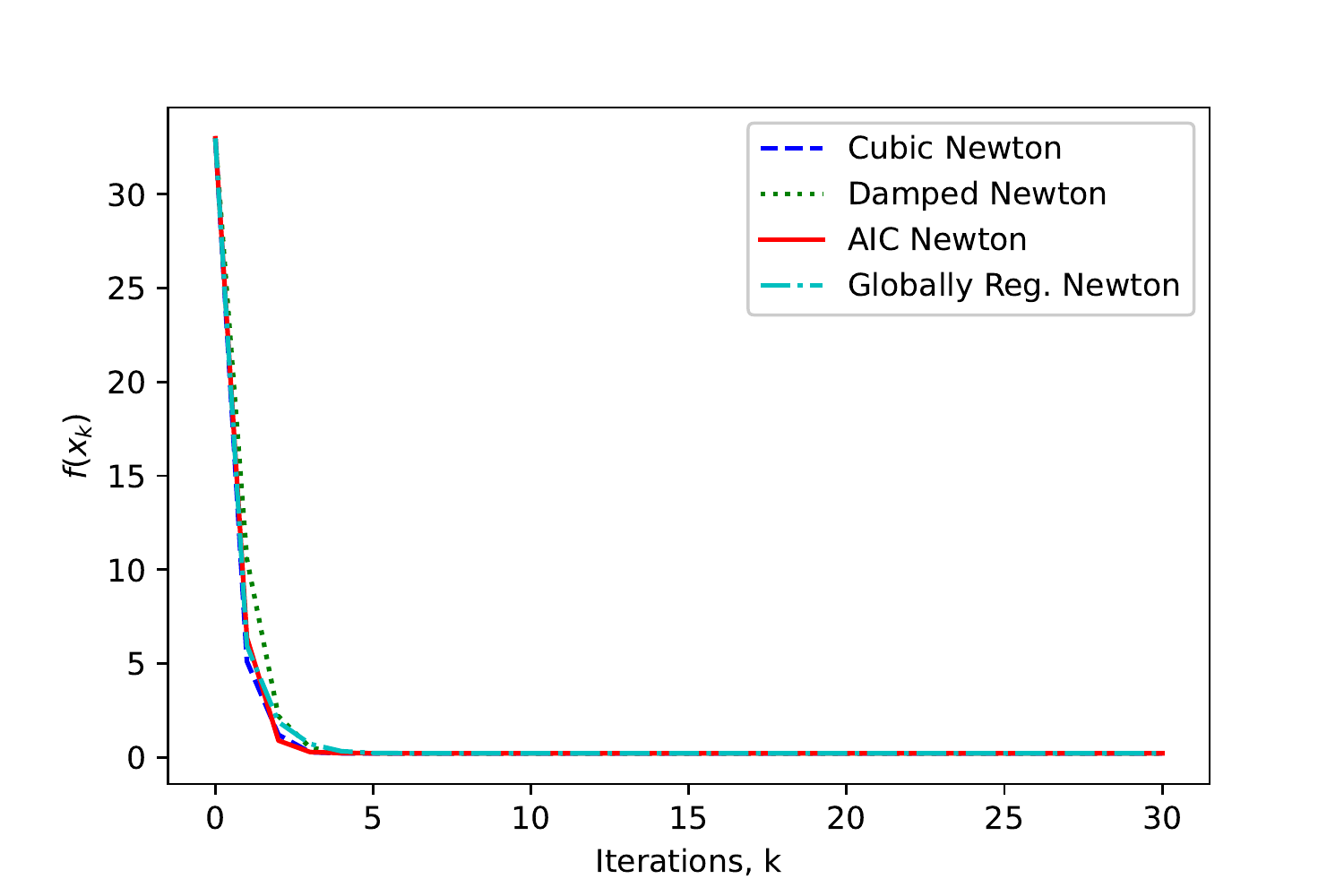}
	\includegraphics[width=0.32\linewidth]{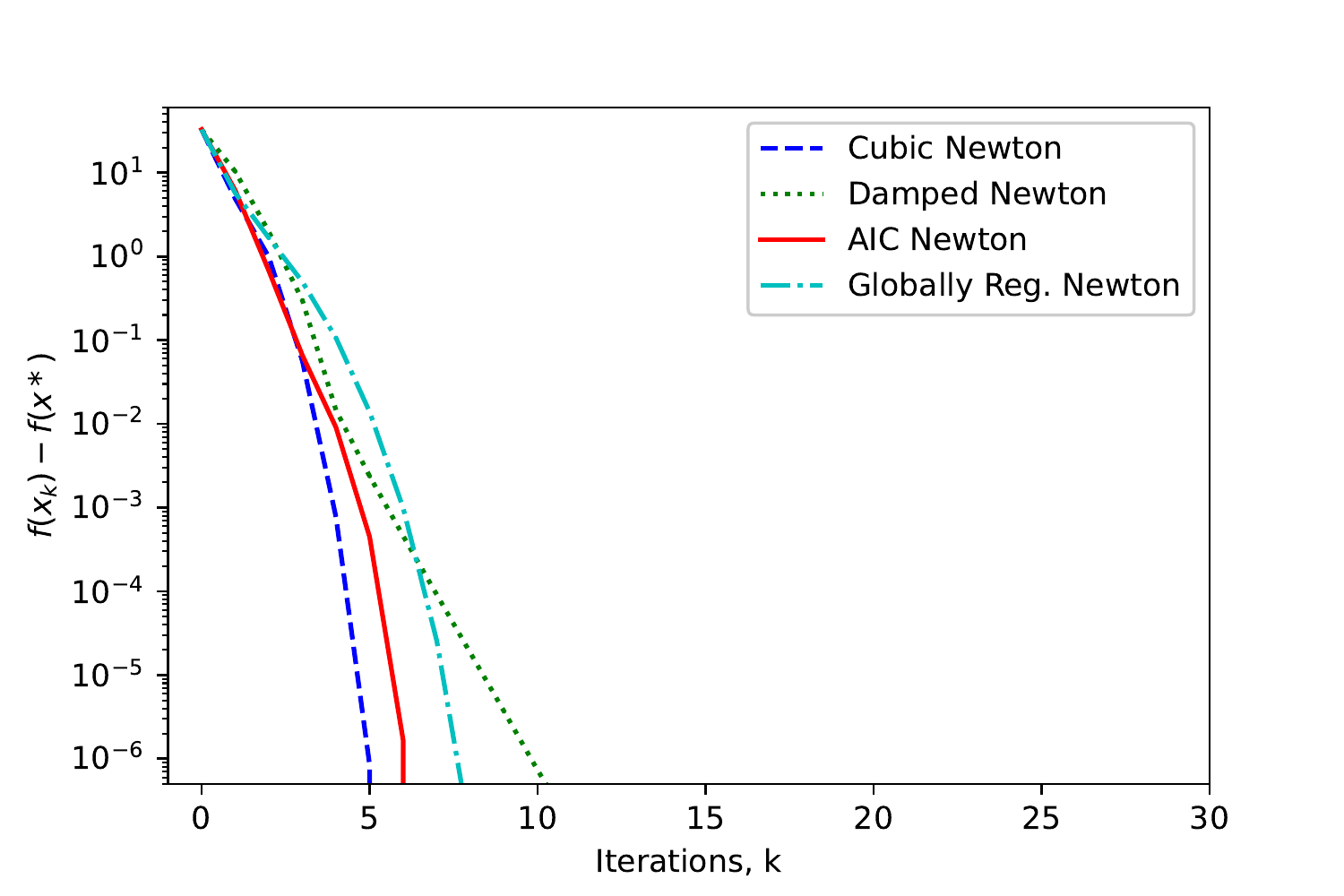}
	\includegraphics[width=0.32\linewidth]{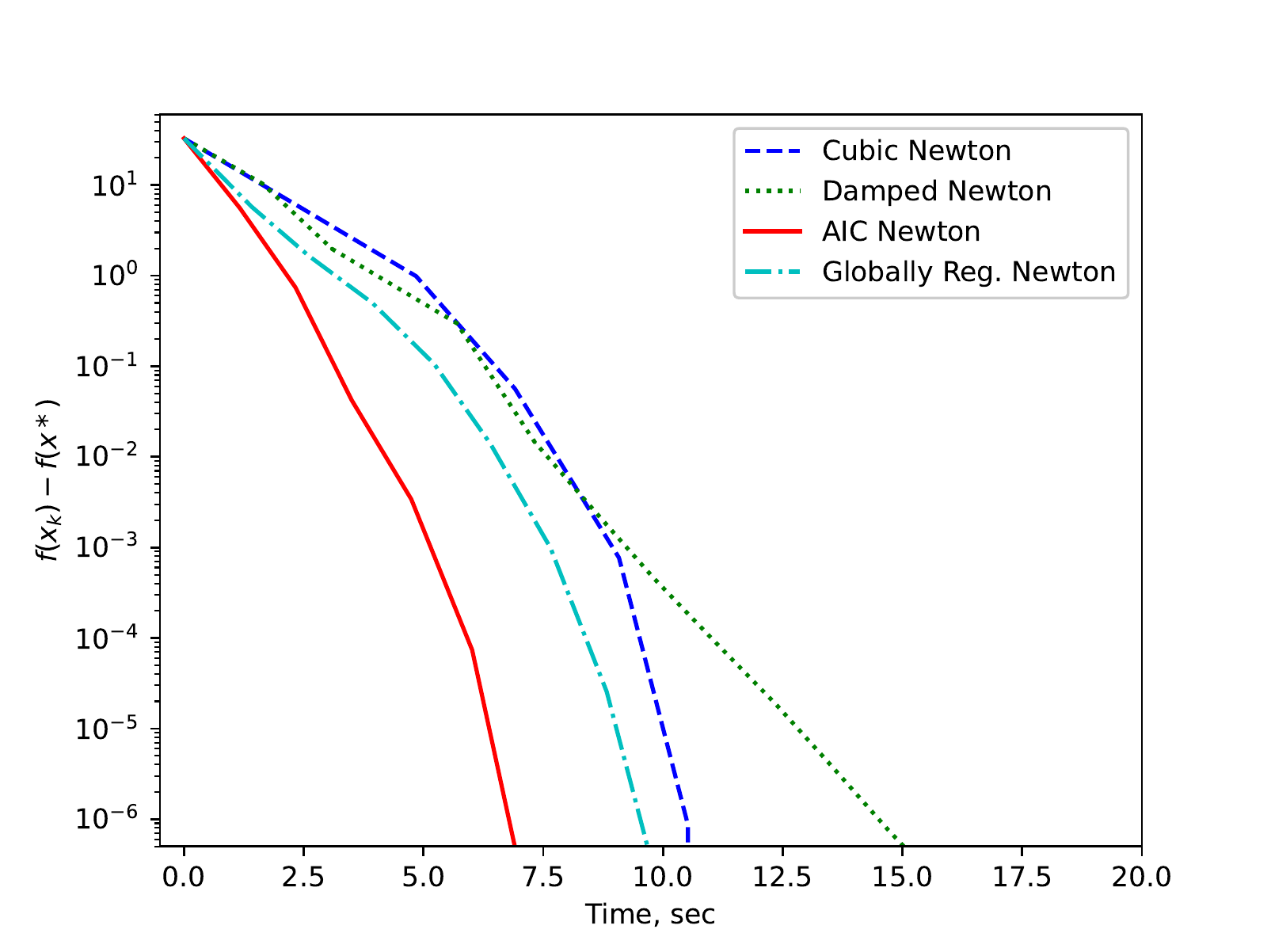}
	\caption{Comparison of regularized Newton methods and Damped Newton method for logistic regression task on  \emph{w8a} dataset.}
	\label{fig:w8a}
	\vspace{-0.25cm}
	\end{figure}
	
	In \Cref{fig:w8a}, we consider classification task on dataset \emph{w8a} \citep{chang2011libsvm}. Number of features for every data sample is $d=300$, $m=49749$. We take starting point $x_0 \eqdef 8 [1,1, \dots, 1]^\top$ and $\mu = 10^{-3}$. Fine-tuned values are $\Lalg = 0.6$, $L_2= 0.0001$, $\alpha = 0.5$. We can see that all methods are very close.  Damped Newton has rather big step $0.5$, so it is fast at the beginning but still struggle at the end because of the fixed step-size.

	\begin{figure}[t]
	\includegraphics[width=0.32\linewidth]{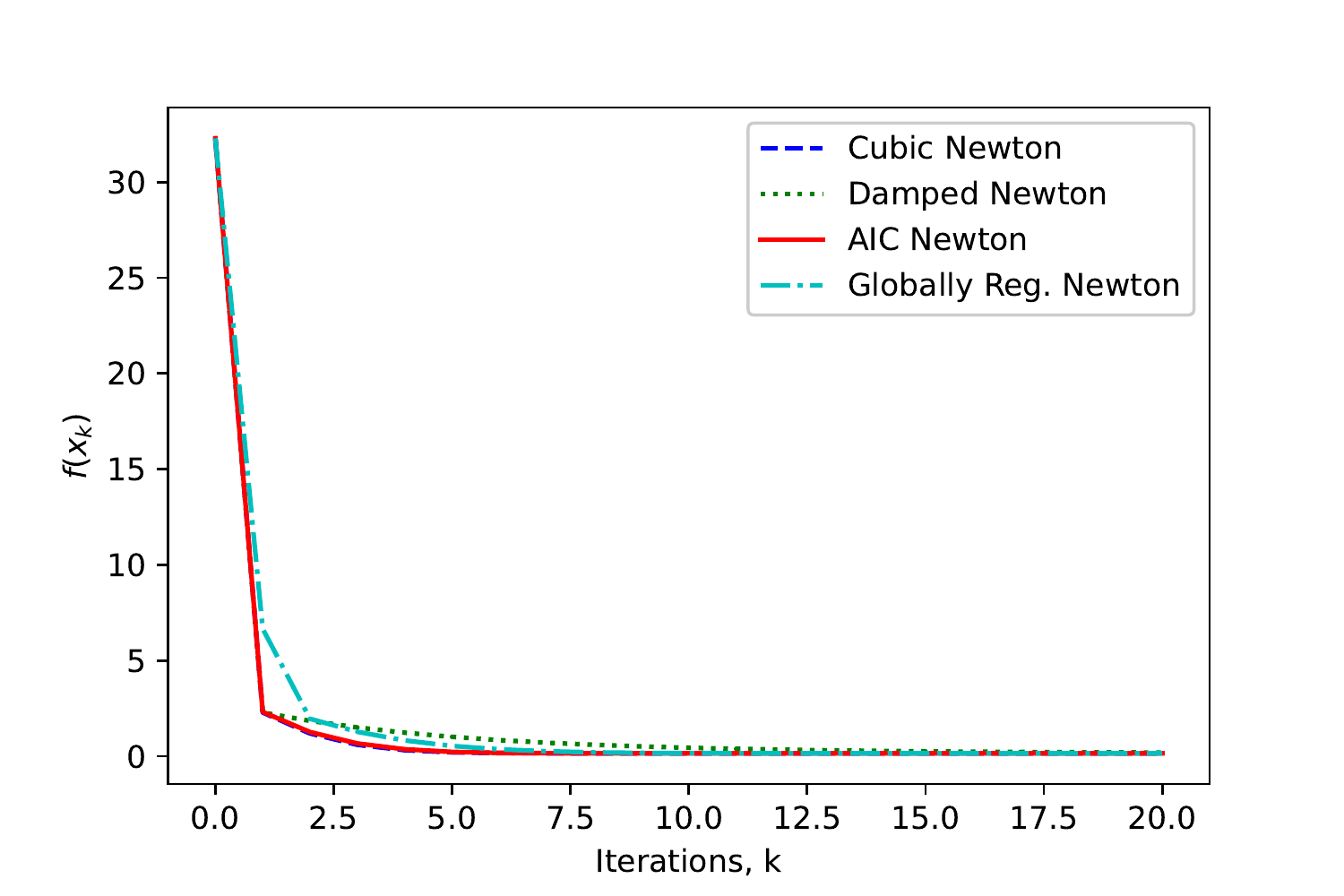}
	\includegraphics[width=0.32\linewidth]{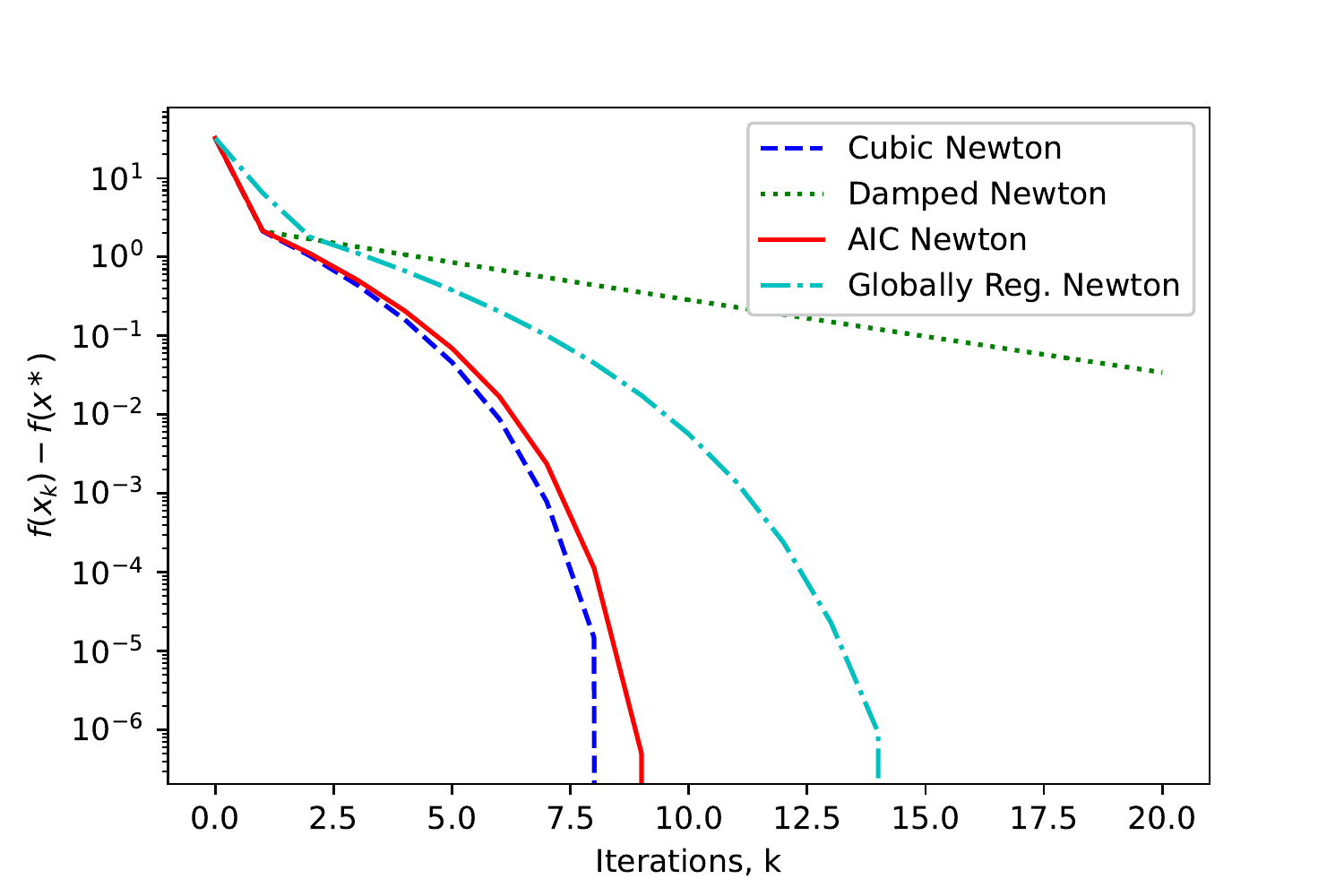}
	\includegraphics[width=0.32\linewidth]{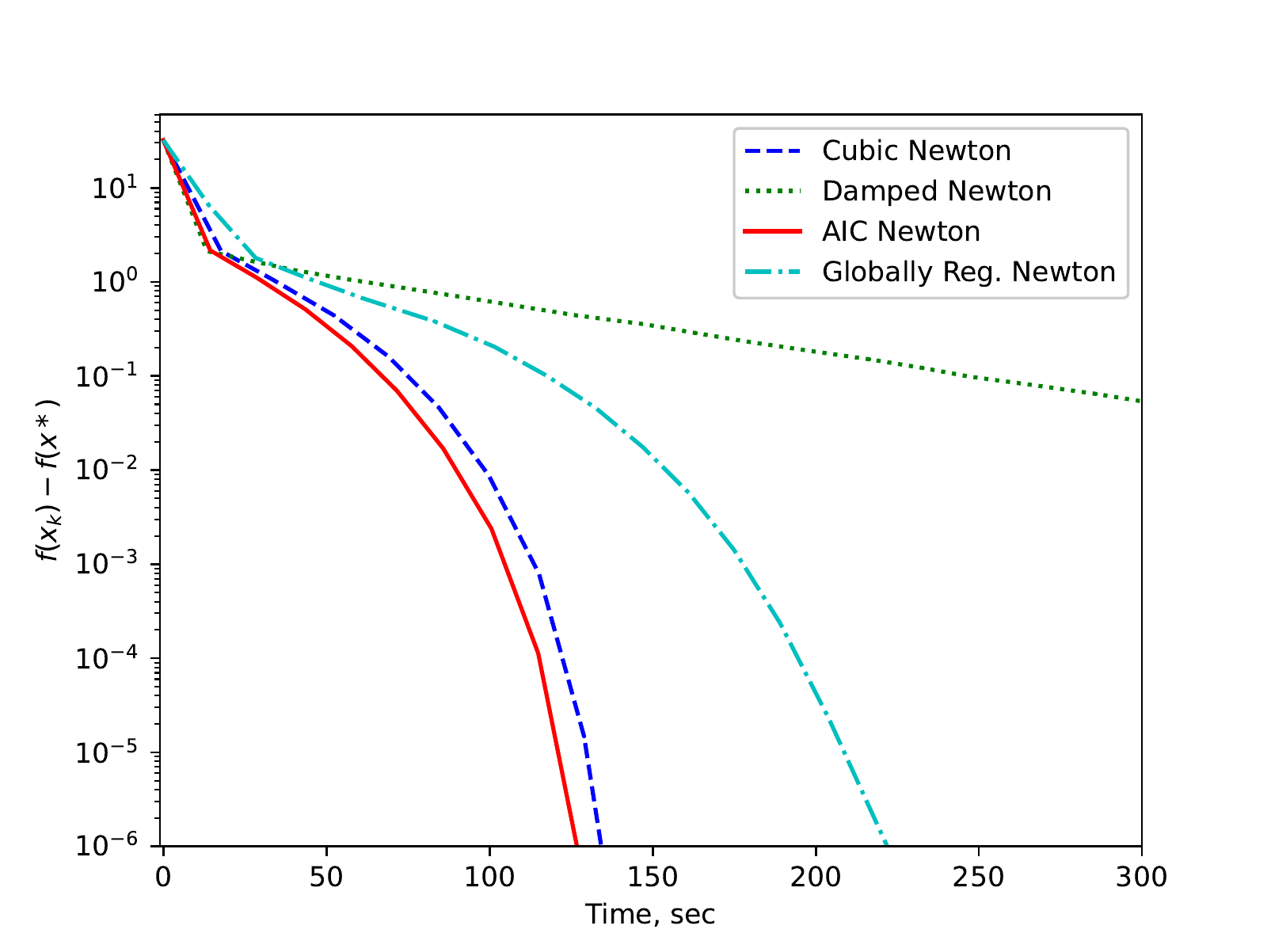}
	\caption{Comparison of regularized Newton methods and Damped Newton method for logistic regression task on  \emph{MNIST} dataset (0 vs. all other digits).}
	\label{fig:mnist}
	\vspace{-0.25cm}
	\end{figure}
	
	In \Cref{fig:mnist}, we consider binary classification task on dataset \emph{MNIST} \citep{deng2012mnist} (one class contains images with 0, another one~--- all others). Number of features for every data sample is $d=28^2=784$, $m=60000$. We take starting point $x_0 \eqdef 3 \cdot [1,1, \dots, 1]^\top$ (such that Newton method started from this point diverges) and $\mu = 10^{-3}$. Fine-tuned values are $\Lalg = 10$, $L_2 = 0.0003$ for Globally Reg. Newton and Cubic Newton, $\alpha = 0.1$. We see that \ain{} has the same iteration convergence as Cubic Newton but faster by time.
	
	\begin{figure}[H]
		\centering
		\includegraphics[width=0.32\textwidth]{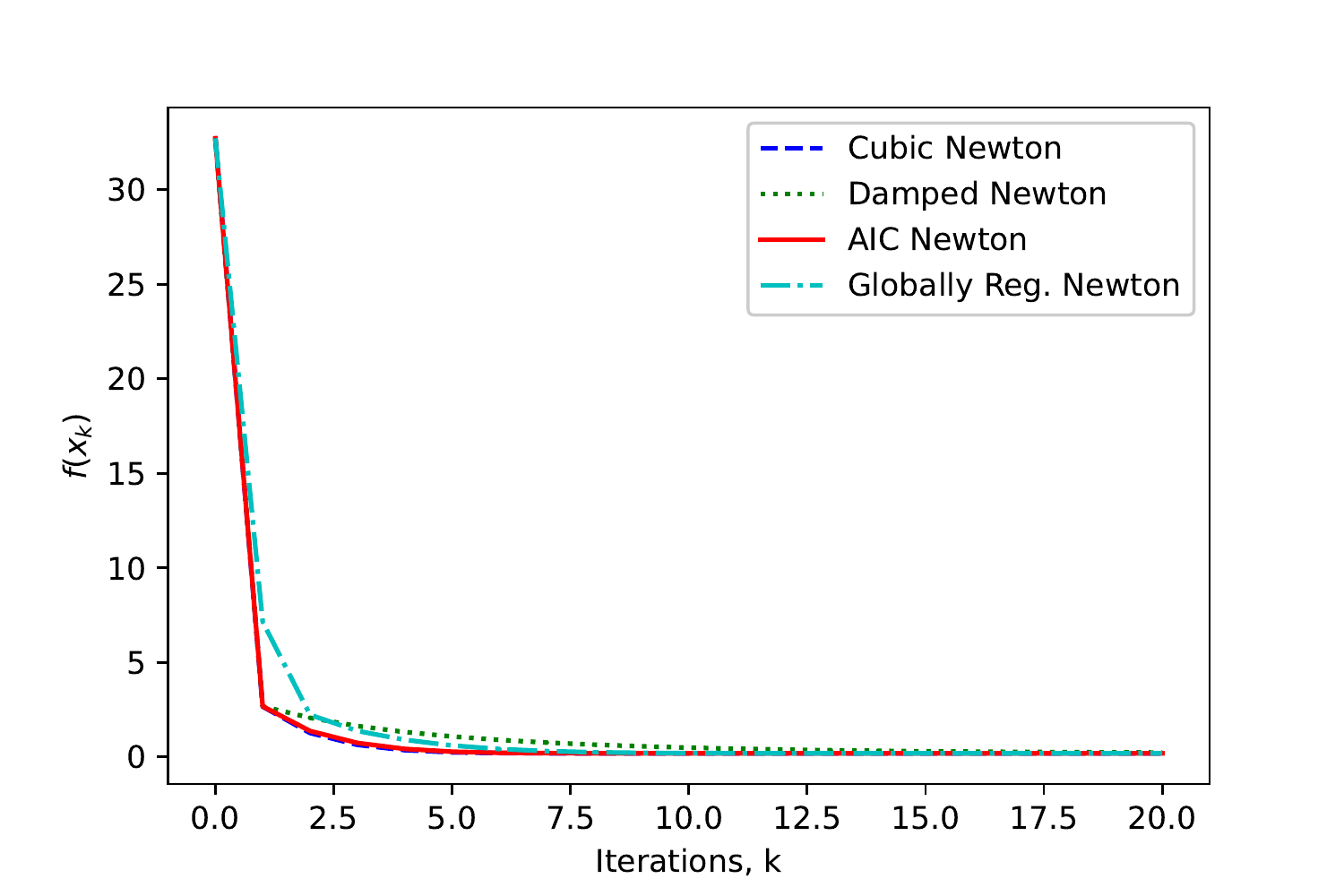}
		\includegraphics[width=0.32\textwidth]{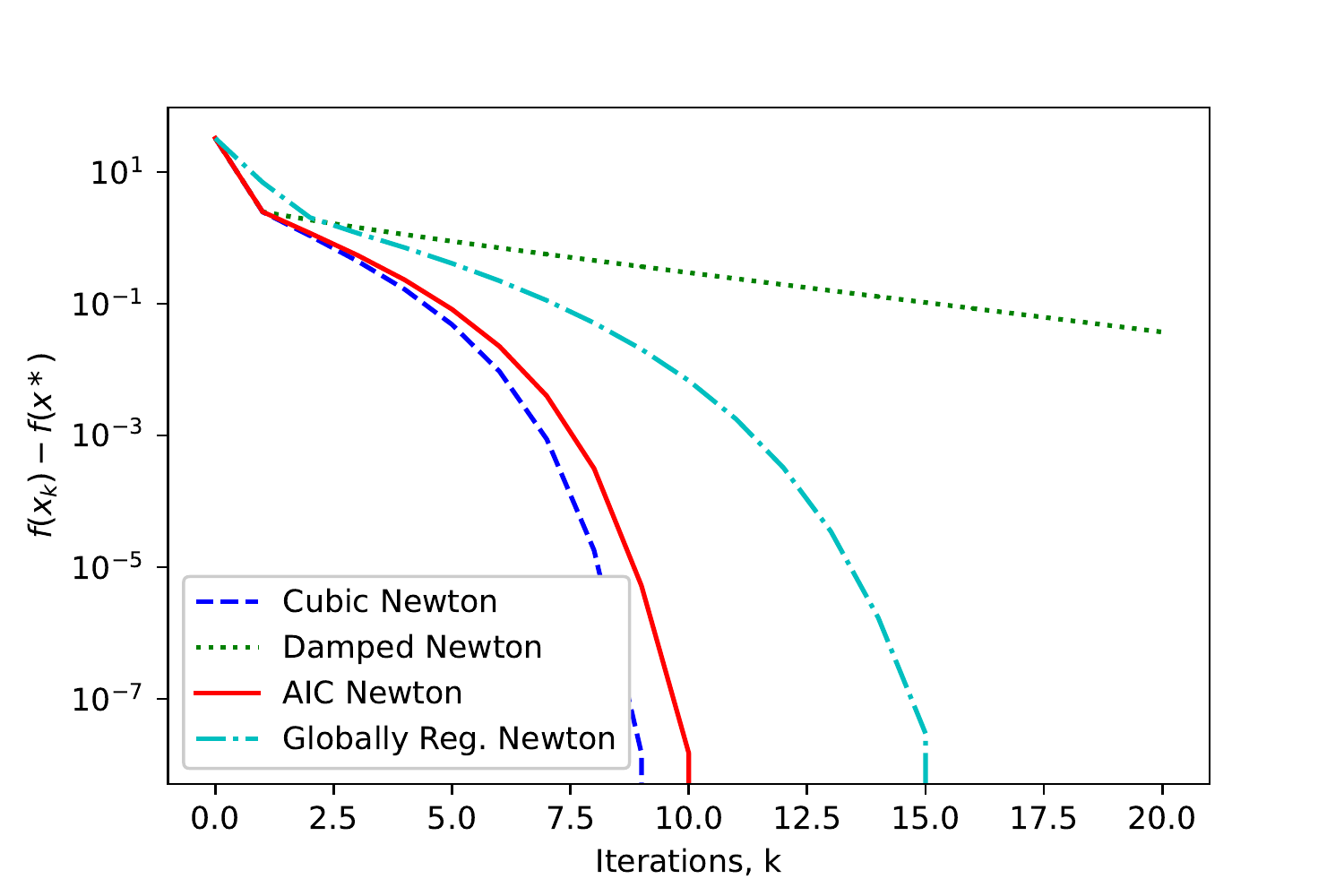}
		\includegraphics[width=0.32\textwidth]{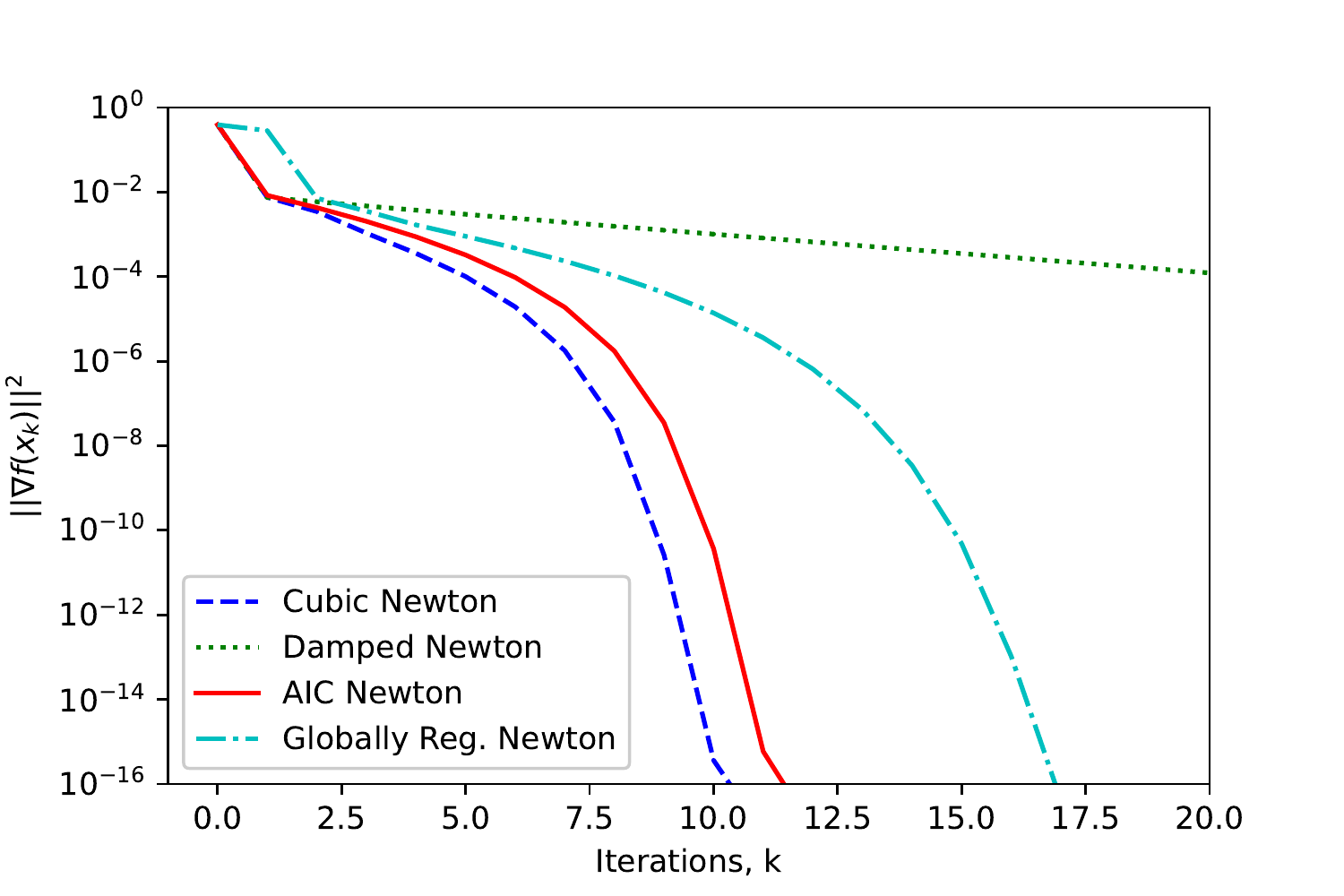}
		\caption{Comparison of regularized Newton methods and Damped Newton method for logistic regression task on \emph{MNIST} dataset (10 models for $i$ vs. other digits problems with argmax aggregation).}
		\label{fig:mnist_all}
	\end{figure}
	
	In \Cref{fig:mnist_all}, we present the results for multi-class classification problem on dataset \emph{MNIST}. We train 10 different models in parallel, each one for the problem of binary classification distinguishing $i$-th class out of others. Loss on current iteration for the plots is defined as average loss of $10$ models. Prediction is determined by the maximum ``probability'' predicted by $i$-th model.
	The estimates for the parameters of methods are the same as in previous experiment.
	We see that \ain{} is the same speed as Cubic Newton and Globally Reg. Newton and much faster than Damped Newton in both value of function and gradient norm.

	For normalized problem, we can analytically compute an upper bound for theoretical constant $L_2$
	\[\|\nabla^3 f(x)\|_{2}\leq L_2. \]
	One can show that $L_2= \tfrac{\sqrt{3}}{18}\simeq 0.1$. In our experiments, we show that Cubic Newton can work with much lower constants:
	\textit{madelon} - $0.015$, \textit{w8a} - $0.00003$, \textit{a9a} - $0.000215$. It means that theoretical approximation of the constants can be bad and we have to tune them for all methods. 

	\section{Proofs of Results Appearing in the Paper} \label{sec:ap_theory}
	In this section, we present proofs of the lemmas and theorems from the main paper's body.

	\subsection{Proofs regarding  affine invariance (\Cref{sec:affine_invariance})} \label{ap:affine-invariance}
	\begin{proof}[Proof of \Cref{le:AI_newton_nesterov} (Lemma 5.1.1, \citet{nesterov2018lectures})] \textbf{[Newton method is affine-invariant]}
		
		Let $y_k = \mathbf A^{-1} x_k$ for some $k\geq 0$ and $\alpha_k$ be affine-invariant. Firstly,
		\begin{align*}
		y_{k+1} &= y_{k} - \alpha_k\left[\nabla^2 \phi(y_k) \right]^{-1} \nabla \phi(y_k)
		= y_{k} - \alpha_k\left[\mathbf A^\top\nabla^2 f(\mathbf A y_k)\mathbf A \right]^{-1} \mathbf A^\top \nabla f(\mathbf A y_k)\\
		&= \mathbf A^{-1}x_{k} - \alpha_k \mathbf A^{-1}\left[\nabla^2 f(x_k)\mathbf A \right]^{-1} \nabla f(x_k) = \mathbf A^{-1}x_{k+1}.
		\end{align*}
		
		Secondly note that $\normMd {\g(x)} {x}$ is affine invariant, as 
		\[\normMd{\nabla g(y_k)} {y_k} 
		=  \nabla g(y_k)^\top \nabla^2 g(y_k)^{-1} \nabla g(y_k) 
		= \nabla f(x_k)^\top \nabla^2 f(x_k)^{-1} \nabla f(x_k) 
		= \normMd{\nabla f(x_k)} {x_k}. \]
		Consequently, stepsizes $\alpha_k$ from \ain{} \eqref{eq:update} and \citet{nesterov2018lectures} are all affine-invariant. Hence these damped Newton algorithms are affine-invariant.
	\end{proof}
	
	\begin{proof} [Proof of \Cref{le:model}] \textbf{[Upper bound from from semi-strong self-concordance]}
		
		We rewrite function value approximation from the left hand side as
		\begin{align*}
		f(y) - f(x) - \nabla f(x)[y-x]
		&= \int\limits_0^1  \ls\nabla f(x+\tau(y-x))-\nabla f(x)\rs[y-x] d \tau \\
		&= \int\limits_0^1 \int\limits_0^{\tau}   \ls\nabla^2 f(x+\lambda(y-x))\rs[y-x]^2 d \lambda d \tau.
		\end{align*}
		
		Taking its norm, we can finish the proof as
		\begin{align*}
		&\left|f(y) - f(x) - \nabla f(x)[y-x] - \frac{1}{2}\nabla^2 f(x)[y-x]^2\right| \\
		& \qquad = \left|\int\limits_0^1 \int\limits_0^{\tau}   \ls\nabla^2 f(x+\lambda(y-x))-\nabla^2 f(x)\rs[y-x]^2 d \lambda d \tau \right|\\
		& \qquad \leq \int\limits_0^1 \int\limits_0^{\tau} \left|  \ls\nabla^2 f(x+\lambda(y-x))-\nabla^2 f(x)\rs[y-x]^2 \right|d \lambda d \tau \\
		& \qquad \stackrel{\eqref{eq:semi-strong-self-concordance}}{\leq} \int\limits_0^1 \int\limits_0^{\tau} \Lsemi \lambda \|y-x\|_x^3 d \lambda d \tau
		= \frac{\Lsemi}{6} \|y-x\|_x^3 .
		\end{align*}
	\end{proof}

	\begin{proof}[Proof of \Cref{th:stepsize}] \textbf{[Minimizer of the model \eqref{eq:S_step} has form of damped Newton method \eqref{eq:update}]}
		
		Proof is straightforward. To show that \ain{} model update minimizes $S_{f,\Lalg}(x)$, we compute the gradient of $S_{f,\Lalg}(x)$ at next iterate of \ain{}. Showing that it is $0$ concludes $x_{k+1} = S_{f,\Lalg}(x_k)$.
		
		For simplicity, denote $h \eqdef y-x$. We can simplify the implicit update step $S_{f,\Lalg}(x)$
		\begin{align}
		S_{f,\Lalg}(x) 
		&= \argmin_{y\in \bbE} \left\lbrace f(x) + \la \nabla f(x),y-x\ra + \frac{1}{2} \la \nabla^2 f(x)(y-x),y-x\ra +\frac{\Lalg}{6}\|y-x\|_{x}^3\right\rbrace\\
		&= x + \argmin_{h  \in \bbE} \left\lbrace \la \nabla f(x),h\ra + \frac 1 2 \normsM h x. +\frac{\Lalg}{6}\|h\|_{x}^3\right\rbrace.
		\end{align}
		Taking gradient of the subproblem with respect to $h$,
		\begin{align}
		\nabla_h \left( \la \nabla f(x),h\ra + \frac 1 2 \normsM h x +\frac{\Lalg}{6}\|h\|_{x}^3 \right)= \g(x) + \h(x) h + \frac {\Lalg} 2 \h(x)h \normM h x.
		\end{align}
		and setting $h$ according to \ain{}, $h = -\alpha \h(x)^{-1} \g(x)$, leads to
		\begin{align} 
		\g(x) -\alpha \g(x) - \frac {\Lalg} 2 \alpha^2 \g(x) \normM {\h(x)^{-1} \g(x)} x
		=-\g(x) \left( -1 + \alpha + \frac {\Lalg} 2 \alpha^2 \normMd {\g(x)} x \right).
		\end{align}

		Finally, \ain{} stepsize $\alpha$ \eqref{eq:update} is chosen as a root of quadratic function
		\begin{equation} \label{eq:alpha_star}
		\frac {\Lalg} 2 \normMd {\g(x)} x \alpha^2 + \alpha -1 =0,
		\end{equation}
		hence the gradient of the \eqref{eq:step} at next iterate of \ain{} is $0$. This concludes the proof.
	\end{proof}
	
	\subsection{Proof of global convergence (\Cref{sec:global})}
	\label{ap:global}
	\begin{proof} [Proof of \Cref{lm:main_lemma}] \textbf{[One step decrease globally under semi-strong self-concordance]}
	
		This claim follows directly from \Cref{le:model}.
		\begin{align*}
		f(S_{f,\Lalg}(x)) 
		&\stackrel{\eqref{eq:wssc_ub}}{\leq} f(x) + \la \nabla f(x), S_{f,\Lalg}(x)-x\ra + \frac{1}{2} \la \nabla^2 f(x)( S_{f,\Lalg} (x)-x),S_{f,\Lalg}(x)-x\ra \\
		& \qquad +\frac{\Lalg}{6}\| S_{f,\Lalg}(x)-x\|_{x}^3 \\
		&\stackrel{\eqref{eq:step}}{=}  \min \limits_{y \in \bbE} \left \{ f(x) + \la \nabla f(x),y-x\ra + \frac{1}{2} \la \nabla^2 f(x)(y-x),y-x\ra +\frac{\Lalg}{6}\|y-x\|_{x}^3 \right \} \\
		&\stackrel{\eqref{eq:lower_upper_bound}}{\leq}  \min \limits_{y \in \bbE} \left\{ f(y)   +  \tfrac{\Lalg}{3}\|y-x\|_{x}^3 \right\} .  
		\end{align*}
	\end{proof}
	
	\begin{proof} [Proof of \Cref{thm:convergence}] \textbf{[Global convergence under semi-strong self-concordance]}
		
		We start by taking Lemma \ref{lm:main_lemma} for any $t \geq 0$, we obtain
		\begin{eqnarray*}
		f(x_{t+1}) 
		&\stackrel{\eqref{eq:min_lemma}}{\leq}& 
		\min \limits_{y \in \bbE} \left\{ f(y)  + \tfrac{\Lalg}{3}\|y-x_t\|_{x_t}^3   \right \}  \\
		&\stackrel{\eqref{D_lebeg}}{\leq}&  \min \limits_{\eta_t \in [0, 1]} \left \{ f(x_{t} + \eta_t (x_{\ast} - x_t)) + \tfrac{\Lalg}{3}\eta_t^3 D^3  \right\} \\
		&\leq & \min \limits_{\eta_t \in [0, 1]} \left \{ (1-\eta_t)f(x_t) + \eta_t f(x_{\ast}) + \tfrac{\Lalg}{3}\eta_t^3 D^3 \right\},
		\end{eqnarray*}
		where for the second line we take $y=x_t + \eta_t (x_{\ast}-x_t)$ and use convexity of $f$ for the third line.
		Therefore, subtracting $f(x_{\ast})$ from both sides, we obtain, for any $\eta_t \in [0,1]$
		\begin{equation}\label{eq:conv_to_sum}
		f(x_{t+1}) - f(x_{\ast}) \leq (1-\eta_t)(f(x_t) - f(x_{\ast})) +\tfrac{\Lalg}{3}\eta_t^3 D^3 
		\end{equation}
		Let us define the sequence $\{A_t\}_{t\geq 0}$ as follows:
		\begin{equation} \label{A_t}
		\gboxeq{
		A_t \eqdef 
		\begin{cases}
		1 , & t = 0\\
		\prod \limits_{i=1}^t (1-\eta_i), & t\geq 1.
		\end{cases}
		}
		\end{equation}
		Then $A_t = (1-\eta_t)A_{t-1}$. Also, we define \gbox{$\eta_0 \eqdef 1$.}
		Dividing both sides of \eqref{eq:conv_to_sum} by $A_t$, we get
		\begin{align}
		\frac{1}{A_t}\left(f(x_{t+1}) - f(x_{\ast})\right)
		&\leq \frac{1}{A_t} \left(1-\eta_t)(f(x_t) - f(x_{\ast}) \right)
		+ \frac{\eta_t^3}{A_t} \frac{\Lalg D^3}{3} \notag\\
		&= \frac{1}{A_{t-1}}(f(x_t) - f(x_{\ast})) 
		+ \frac{\eta_t^3}{A_t} \frac{\Lalg D^3}{3}  \label{eq:summing}.
		\end{align}
		Summing  both sides of inequality \eqref{eq:summing} for $t = 0, \ldots, k$ , we obtain
		\begin{eqnarray*}
		\frac{1}{A_k}(f(x_{{k}+1}) - f(x_{\ast}))  
		&\leq& \frac{(1-\eta_0)}{A_0}(f(x_0) - f(x_{\ast}))
		+ \frac{\Lalg D^3}{3}  {\sum \limits_{t=0}^k \frac{ \eta_t ^{3}}{A_t}} \notag\\
		&\stackrel{\eta_0=1}{=} &
		\frac{\Lalg D^3}{3}  {\sum \limits_{t=0}^k \frac{ \eta_t ^{3}}{A_t}}.
		\end{eqnarray*}
		As a result, 
		\begin{equation}
		f(x_{{k}+1}) - f(x_{\ast}) \leq
		\frac{\Lalg D^3}{3}  {\sum \limits_{t=0}^k \frac{A_k \eta_t ^{3}}{A_t}}p \label{eq:last} 
		\end{equation}
		To finish the proof, we need to choose $\eta_t$ so that $\sum_{t=0}^{k} \frac{A_{k} \eta_{t}^{3}}{A_{t}} = O(k^{-2})$. This holds for\footnote{Note that formula of $\eta_0$ coincides with its definition above.}
		\begin{equation} \label{alpha_t}
		\gboxeq{\eta_t \eqdef \frac{3}{t+3}, \quad t \geq 0,}
		\end{equation}
		as stated in the next lemma.
		
		\begin{lemma}[Properties of $\eta_t$ and $A_t$, [from (2.23), \citet{ghadimi2017second}]] \label{le:sequence_bound}
		    For choice $\eta_t$ as \eqref{alpha_t} and $A_t$ as \eqref{A_t} we have
		    \begin{align}
		        A_t &= \frac 6 {(t+1)(t+2)(t+3)},\\
		        \sumin tk \frac {\eta_t^3} {A_t} &= \sumin tk \frac {9(t+1)(t+2)}{2(t+3)^2} \leq \frac {9k} 2.
		    \end{align}
		\end{lemma}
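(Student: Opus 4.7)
The plan is to verify both assertions by directly substituting the explicit formula $\eta_t = 3/(t+3)$ into the recursive definitions and then performing elementary algebraic manipulations; no deeper machinery is needed.

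First, I would compute $A_t$ in closed form by telescoping the product. Since $1 - \eta_i = i/(i+3)$, we obtain
\[
A_t = \prod_{i=1}^{t} \frac{i}{i+3} = \frac{t!}{\prod_{i=1}^{t}(i+3)} = \frac{t! \cdot 3!}{(t+3)!} = \frac{6}{(t+1)(t+2)(t+3)},
\]
which also matches the base case $A_0 = 1$ (treating the empty product as $1$). Next, substituting this identity together with $\eta_t^3 = 27/(t+3)^3$ into the ratio gives
\[
\frac{\eta_t^3}{A_t} = \frac{27}{(t+3)^3} \cdot \frac{(t+1)(t+2)(t+3)}{6} = \frac{9(t+1)(t+2)}{2(t+3)^2},
\]
which is the first equality in the sum bound.

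For the inequality $\sum_{t=0}^k \eta_t^3/A_t \leq 9k/2$, the key observation is the pointwise bound $(t+1)(t+2) \leq (t+3)^2$, so every summand is at most $9/2$. Summing $k+1$ such terms gives the coarse estimate $9(k+1)/2$, which is slightly weaker than the claim. To sharpen this to $9k/2$, I would exploit that the $t=0$ term equals exactly $1$ rather than $9/2$, leaving a slack of $7/2$; writing $(t+1)(t+2)/(t+3)^2 = 1 - (3t+7)/(t+3)^2$ and estimating the resulting harmonic-type correction then absorbs the remaining $9/2$ for $k \geq 1$.

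The main obstacle is purely arithmetical rather than conceptual: only the precise constant $9k/2$ (as opposed to the trivial $9(k+1)/2$) requires any real care, and even that is an off-by-one-style refinement rather than a genuine difficulty. Everything else is routine algebra once the telescoping identity for $A_t$ is in hand.
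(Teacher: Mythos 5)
Your proof is correct and follows essentially the same route as the paper: the telescoping evaluation $A_t = t!\,3!/(t+3)!$ is identical, and your uniform bound on each summand via $(t+1)(t+2)\le (t+3)^2$ plays the same role as the paper's monotonicity argument (the paper folds $A_k$ into the sum, shows the summands are non-decreasing, and bounds the sum by $(k+1)$ times its last term, arriving at the equivalent estimate $27(k+1)/(k+3)^3 = O(1/k^2)$). Your off-by-one refinement from $9(k+1)/2$ to $9k/2$ is sound for $k\ge 1$ — the correction sum $\sum_{t=0}^{k}(3t+7)/(t+3)^2$ already exceeds $1$ at $k=1$ — and is in fact more faithful to the lemma as stated than the paper's own proof, which never isolates the constant $9k/2$ but passes directly to the $O(1/k^2)$ bound used in \Cref{thm:convergence}.
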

		Plugging \Cref{le:sequence_bound} inequalities to \eqref{eq:last} concludes the proof of the \Cref{thm:convergence},
		\begin{align*}
		f(x_{{k}+1}) - f(x_{\ast}) 
		\leq \frac 6 {(k+1)(k+2)(k+3)} \frac{\Lalg D^3}{3} \frac {9k} 2
		\leq \frac{9\Lalg  D^3}{k^2}
		\leq \frac{9\Lalg  R^3}{k^2}.
		\end{align*}

	\end{proof}
	
	For readers convenience, we include proof of \Cref{le:sequence_bound} in \Cref{ssec:technical}.
	
	\subsection{Proofs of local convergence (\Cref{sec:local})}
	
	\begin{proof}[Proof of \Cref{le:lsconv}] \textbf{[Strong convexity / Bound on inverse Hessian norm change]}
	
	Claim follows from Theorem 5.1.7 of \citet{nesterov2018lectures}, which states that for $\Lstandard$-self-concordant function, hence also for $\Lsemi$-semi-strongly self-concordant function $f$ and $x_k, x_{k+1}$ such that $\frac {\Lsemi} 2 \normM {\xdiff } {x_k} < 1$ holds
		\begin{equation*}
		\left( 1- \frac {\Lsemi} 2 \normM {\xdiff } {x_k} \right)^2 \h(x_{k+1}) \preceq \h(x_k) \preceq \left( 1- \frac {\Lsemi} 2 \normM {\xdiff } {x_k} \right)^{-2} \h(x_{k+1}).
		\end{equation*}
	Let $c$ be some constant, $0<c<1$.
        Then for updates of \ain{} in the neighborhood $$\left \{ x_k: c \geq \frac{\Lalg}{2}\alpha_k \normMd {\g (x_k)} {x_k} \right \}$$ holds
		\begin{equation} \label{eq:inv_hessian_bound_scf}
		\h(x_{k+1})^{-1} \preceq \left( 1 - c \right)^{-2} \h(x_k)^{-1}.
		\end{equation}
    \end{proof}
    In order to prove \Cref{le:one_step_local}, we first use semi-strong self-concordance to prove a key inequality -- a version of Hessian smoothness, bounding gradient approximation by difference of points.
    \begin{lemma}\label{le:sssc_to_loc_bound}
    	For semi-strongly self-concordant function $f$ holds
    	\begin{equation}
    	\label{eq:semi_norm_ineq_}
    	\left\|\nabla f(y) - \nabla f(x) - \nabla^2 f(x)[y-x] \right\|^*_x
    	\leq \frac{\Lsemi}{2} \|y-x\|_x^2.
    	\end{equation}
    \end{lemma}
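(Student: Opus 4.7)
The plan is to use the standard integral representation of the gradient-difference residual and then control the integrand using the definition of the operator norm together with the semi-strong self-concordance bound.

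First I would write
\begin{equation*}
\nabla f(y) - \nabla f(x) - \nabla^2 f(x)[y-x] = \int_0^1 \bigl(\nabla^2 f(x + \tau(y-x)) - \nabla^2 f(x)\bigr)[y-x]\, d\tau,
\end{equation*}
which follows from the fundamental theorem of calculus applied to $\tau \mapsto \nabla f(x+\tau(y-x))$. This is a statement about elements of $\bbE^\ast$, so I can then take the dual norm $\|\cdot\|_x^\ast$ of both sides and pull it inside the integral by the triangle inequality.

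Next I would bound the integrand pointwise. By the definition of the operator norm \eqref{eq:matrix_operator_norm} applied with the fixed base point $x$, for any $v \in \bbE$ and any operator $\mathbf{H}:\bbE\to\bbE^\ast$ we have $\|\mathbf{H}v\|_x^\ast \leq \|\mathbf{H}\|_{op}\,\|v\|_x$. Applying this with $\mathbf{H} = \nabla^2 f(x+\tau(y-x)) - \nabla^2 f(x)$ and $v = y-x$, and then invoking the semi-strong self-concordance bound \eqref{eq:semi-strong-self-concordance} at the pair of points $(x+\tau(y-x),\, x)$, we get
\begin{equation*}
\bigl\|\bigl(\nabla^2 f(x+\tau(y-x)) - \nabla^2 f(x)\bigr)[y-x]\bigr\|_x^\ast \leq \Lsemi\, \|\tau(y-x)\|_x \, \|y-x\|_x = \Lsemi\, \tau\, \|y-x\|_x^2 .
\end{equation*}

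Finally I would integrate this bound over $\tau\in[0,1]$, which yields $\int_0^1 \Lsemi \tau\, \|y-x\|_x^2\, d\tau = \tfrac{\Lsemi}{2}\|y-x\|_x^2$, giving exactly \eqref{eq:semi_norm_ineq_}. There is no real obstacle here; the only subtle point is making sure the operator norm in \Cref{def:semi-self-concordance} is the one that pairs correctly with the $x$-norms on the right-hand side of the operator-norm inequality, which it does by construction of \eqref{eq:matrix_operator_norm}.
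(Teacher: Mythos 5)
Your proposal is correct and follows essentially the same route as the paper's proof: the integral representation of the gradient residual, the triangle inequality for the dual norm under the integral, the operator-norm bound $\|\mathbf{H}v\|_x^\ast \leq \|\mathbf{H}\|_{op}\|v\|_x$, and the semi-strong self-concordance bound at the pair $(x+\tau(y-x),x)$ producing the factor $\Lsemi\tau\|y-x\|_x^2$ that integrates to $\tfrac{\Lsemi}{2}\|y-x\|_x^2$. No gaps.
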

    \begin{proof} [Proof of \Cref{le:sssc_to_loc_bound}] \textbf{[Local smoothness assumption follows from semi-strong self-concordance]}
	
	We rewrite gradient approximation on the left hand side as
	\begin{gather*}
	\nabla f(y) -  \nabla f(x) - \nabla^2 f(x)[y-x] =
	\int\limits_0^1  \ls\nabla^2 f(x+\tau(y-x))-\nabla^2 f(x)\rs[y-x] d \tau.
	\end{gather*}
	Now, we can bound it in dual norm as
	\begin{align*}
	\left\|\nabla f(y) - \nabla f(x) - \nabla^2 f(x)[y-x] \right\|_x^{\ast}
	&= \left\|\int\limits_0^1   \ls\nabla^2 f(x+\tau(y-x))-\nabla^2 f(x)\rs[y-x] d \tau \right\|_x^{\ast}\\
	&\leq \int\limits_0^1 \left\|  \ls\nabla^2 f(x+\tau(y-x))-\nabla^2 f(x)\rs[y-x] \right\|_x^{\ast}  d \tau \\
	&\leq \int\limits_0^1  \left\|\nabla^2 f(x+\tau(y-x))-\nabla^2 f(x)\right\|_{op} \|y-x\|_x   d \tau \\
	&\stackrel{\eqref{eq:semi-strong-self-concordance}}{\leq} \int\limits_0^1  \Lsemi \tau \|y-x\|_x^2 d  \tau
	= \frac{\Lsemi}{2} \|y-x\|_x^2,
	\end{align*}
	where in second inequality we used property of operator norm \eqref{eq:matrix_operator_norm}.
\end{proof}	
      Finally, we are ready to prove one step decrease and the convergence theorem.
	
	\begin{proof}[Proof of \Cref{le:one_step_local}]\textbf{[One step decrease locally under semi-strong self-concordance]}
		
		We bound norm of $\nabla f(x_{k+1})$ using basic norm manipulation and triangle inequality as
		\begin{align*}
		\normMd {\nabla f(x_{k+1})} {x_k}
		&\stackrel{\eqref{eq:update}}{=} \normMd{\nabla f(x_{k+1}) - \nabla^2 f(x_k)(x_{k+1}-x_k)  - \alpha_k \nabla f(x_k) }{x_k}\\
		&= \normMd{\nabla f(x_{k+1}) - \nabla f(x_{k}) - \nabla^2 f(x_k)(x_{k+1}-x_k)  + (1-\alpha_k) \nabla f(x_k) }{x_k}\\
		&\leq \normMd{\nabla f(x_{k+1}) - \nabla f(x_{k}) - \nabla^2 f(x_k)(x_{k+1}-x_k)}{x_k}  + (1-\alpha_k) \normMd{\nabla f(x_k) }{x_k}
		\end{align*}
		
		Using \Cref{le:sssc_to_loc_bound}, 
		we can continue
		\begin{align*}
		\normMd {\nabla f(x_{k+1})} {x_k}
		&\leq \normMd{\nabla f(x_{k+1}) - \nabla f(x_{k}) - \nabla^2 f(x_k)(x_{k+1}-x_k)}{x_k}  + (1-\alpha_k) \normMd{\nabla f(x_k) }{x_k}  \\
		&\stackrel{\eqref{eq:semi_norm_ineq_}}{\leq} 
		\frac{\Lsemi}{2} \normsM{x_{k+1}-x_k} {x_k} + (1-\alpha_k) \normMd{\nabla f(x_k) }{x_k} \\
		&\stackrel{\eqref{eq:update}}{\leq} 
		\frac{\Lsemi \alpha_k^2}{2} \normsMd{\nabla f(x_k)} {x_k} + (1-\alpha_k) \normMd{\nabla f(x_k) }{x_k} \\
		&\leq  \frac{\Lalg \alpha_k^2}{2} \normsMd{\nabla f(x_k)} {x_k} + (1-\alpha_k) \normMd{\nabla f(x_k) }{x_k} \\
		&=  \left(\frac{\Lalg \alpha_k^2}{2} \normMd{\nabla f(x_k)} {x_k} -\alpha_k +1\right) \normMd{\nabla f(x_k) }{x_k}\\
		&\stackrel{\eqref{eq:alpha_star}}{=} \Lalg \alpha_k^2\normsMd{\nabla f(x_k) }{x_k}
		\end{align*}
		
		We use \Cref{le:lsconv} to shift matrix norms.
		\begin{eqnarray}
		\normMd{\g (x_{k+1})}{x_{k+1}}
		& \stackrel{\eqref{eq:inv_hessian_bound_scf}} \leq & \frac{1}{1-c} \normMd{\g (x_{k+1})} {x_k} \notag \\
		& \stackrel{\eqref{eq:one_step_local}} \leq & \frac{\Lalg \alpha_k^2}{1-c}  \normsMd{\g (x_k)} {x_k}\label{eq:local_shifted_scf_alt}\\
		& < & \frac{\Lalg \alpha_k}{1-c}  \normsMd{\g (x_k)} {x_k}. \notag
		\end{eqnarray}
		
		We obtain neighborhood of decrease by solving $ \frac{\Lalg \alpha_k}{1-c}  \normMd{\g (x_k)} {x_k} \leq 1$ in $\normMd{\g  (x_k)} {x_k}$.
		
	\end{proof}
	
	\begin{proof} [Proof of \Cref{th:local}] \textbf{[Local convergence under semi-strong self-concordance]}
		
		Let $c = \frac{1}{3}$,
		then for $\normMd{\g (x_{0})} {x_{0}} < \frac {8 } {9\Lalg}$, we have
		$\frac{\Lalg \alpha_0}{1-c}  \normMd{\g (x_0)} {x_0} \leq 1$ and $c \geq \frac{\Lalg}{2}\alpha_0 \normMd {\g (x_0)} {x_0}$. Then from \Cref{le:one_step_local} we have  guaranteed the decrease of gradients $\normMd \gn {x_{k+1}} \leq \normMd \gk {x_k} < \frac {8} {9 \Lalg}.$ We finish proof by chaining \eqref{eq:local_shifted_scf_alt} and simplifying with $\alpha_i \leq 1$.
		\begin{align}
			\normMd{\g(x_{k})}{x_{k}}
			&\leq \left( \tfrac {3} {2} \Lalg \right )^k \left( \prod _{i=0}^{k} \alpha_i^{2} \right)
			\left( \normMd{\g(x_{0})} {x_{0}} \right)^{2^k} .
		\end{align}
	\end{proof}

	\subsection{Technical lemmas} \label{ssec:technical}
	\begin{lemma}[Arithmetic mean -- Geometric mean inequality]
		For $c\geq0$ we have 
		\begin{equation} \label{eq:AG}
		1+ c = \frac {1 + (1+2c)}{2} \stackrel{AG} \geq \sqrt{1+2c}.
		\end{equation}
	\end{lemma}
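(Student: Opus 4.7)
The statement asserts $1+c \geq \sqrt{1+2c}$ for $c\geq 0$, written via the identity $1+c = \tfrac{1+(1+2c)}{2}$. My plan is to present this as an immediate application of the two-variable arithmetic-geometric mean inequality. Specifically, for the nonnegative numbers $a_1 \eqdef 1$ and $a_2 \eqdef 1+2c$ (nonnegativity of $a_2$ follows from $c \geq 0$), AM-GM gives $\tfrac{a_1 + a_2}{2} \geq \sqrt{a_1 a_2}$, which rearranges exactly to the claimed chain of (in)equalities. The middle equality is just arithmetic: $\tfrac{1+(1+2c)}{2} = 1+c$.

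As a self-contained alternative that avoids invoking AM-GM as a black box, I would simply square both sides (both are nonnegative for $c\geq 0$, so squaring preserves the inequality). The squared statement $(1+c)^2 \geq 1+2c$ expands to $1 + 2c + c^2 \geq 1+2c$, which reduces to $c^2 \geq 0$, trivially true. Since both sides of the original inequality are nonnegative, this equivalence closes the proof. There is no real obstacle here; the only thing to double-check is that the quantities being compared are nonnegative so that squaring (or invoking AM-GM) is valid, and this is immediate from $c \geq 0$.
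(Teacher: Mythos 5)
Your proposal is correct and matches the paper's (implicit) argument exactly: the lemma is proved by applying the two-variable AM--GM inequality to $a_1=1$ and $a_2=1+2c$, precisely as the annotation $\stackrel{AG}{\geq}$ in the statement indicates. Your alternative squaring argument reducing to $c^2\geq 0$ is a valid, equally elementary substitute, but adds nothing essential here.
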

	
	\begin{lemma}[Jensen for square root]
		Function $f(x) = \sqrt x$ is concave, hence for $c\geq0$ we have 
		\begin{equation}\label{eq:jensen}
		\frac 1 {\sqrt 2} (\sqrt c+1) \leq \sqrt{c+1} \qquad \leq \sqrt c + 1.
		\end{equation}
	\end{lemma}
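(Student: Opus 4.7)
The statement has two inequalities to verify for $c\geq 0$, and both can be reduced to elementary algebra, so the plan is to square each side and check.

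\emph{Upper bound $\sqrt{c+1}\leq \sqrt c + 1$.} Both sides are nonnegative, so it suffices to compare the squares. I would compute $(\sqrt c + 1)^2 = c + 2\sqrt c + 1 \geq c+1 = (\sqrt{c+1})^2$, the first inequality being immediate from $\sqrt c \geq 0$. Taking square roots (both sides nonnegative) gives the claim. This is essentially subadditivity $\sqrt{a+b}\leq \sqrt a + \sqrt b$ applied to $a=c$, $b=1$.

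\emph{Lower bound $\tfrac{1}{\sqrt 2}(\sqrt c + 1)\leq \sqrt{c+1}$.} Again, I would square both sides. The inequality becomes $\tfrac{1}{2}(\sqrt c + 1)^2 \leq c+1$, i.e.\ $\tfrac{1}{2}(c + 2\sqrt c + 1)\leq c+1$, which simplifies to $2\sqrt c \leq c+1$. This last inequality is exactly the AM-GM inequality (equivalently, $(\sqrt c - 1)^2 \geq 0$), which is already recorded just above as \eqref{eq:AG} with $c$ replaced by a suitable choice. Alternatively, I would invoke concavity of $x\mapsto \sqrt x$ directly: $\sqrt{\tfrac{c+1}{2}} \geq \tfrac{\sqrt c + \sqrt 1}{2}$, and multiplying by $\sqrt 2$ yields the stated lower bound; this matches the ``Jensen'' framing of the lemma.

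\emph{Obstacle.} There really is no obstacle: both halves are one-line squaring arguments, and the only care needed is noting that all quantities are nonnegative so that squaring preserves the inequalities. The ``Jensen'' route for the lower bound is the conceptually cleanest, while the algebraic $(\sqrt c - 1)^2 \geq 0$ route makes the connection to the previously used AM-GM inequality \eqref{eq:AG} transparent. I would present both halves in two short displayed lines without further commentary.
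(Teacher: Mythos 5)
Your proposal is correct, and it follows essentially the same route as the paper, which proves the lower bound by simply invoking concavity of $x\mapsto\sqrt{x}$ (the Jensen step you spell out) and treats the upper bound as the standard subadditivity of the square root; your squaring arguments are a valid elementary verification of both halves. No gaps.
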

	
	\noindent
	\textbf{\Cref{le:sequence_bound}} [(2.23) from \citep{ghadimi2017second}] For 
	\[ \eta_t \eqdef \frac{3}{t+3}, \quad t \geq 0, \qquad \text{and} \qquad A_t \eqdef 
		\begin{cases}
		1 , & t = 0\\
		\prod \limits_{i=1}^t (1-\eta_i), & t\geq 1
		\end{cases}
		\]
	we have
    \begin{align}
        A_t &= \frac 6 {(t+1)(t+2)(t+3)} \qquad \text{and} \qquad \sumin tk \frac {\eta_k^3} {A_t} &= \sumin tk \frac {9(t+1)(t+2)}{2(t+3)^2} \leq \frac {3k} 2.
    \end{align}
	
	\begin{proof}[Proof of \Cref{le:sequence_bound}]
		We have
		\begin{equation}\label{eq:A_t_bound}
		A_{k} =\prod_{t=1}^{k}\left(1-\eta_{t}\right)=\prod_{t=1}^{k} \frac{t}{t+3}=\frac{k! \, 3!}{(k+3) !}= 3! \prod_{j=1}^{3} \frac{1}{k+j}, 
		\end{equation}
		which gives,
		\begin{equation}
		\label{eq:sum_A_t_bound}
		\begin{aligned}
		\sum_{t=0}^{k} \frac{A_{k} \eta_{t}^{3}}{A_{t}} &=\sum_{t=0}^{T} \frac{3^{3}}{(t+3)^{3}} \prod_{j=1}^{3}\frac{t+j}{k+j} =3^{3} \prod_{j=1}^{3} \frac{1}{k+j} \sum_{t=0}^{k}  \frac{\prod_{j=1}^{3} (t+j)}{(t+3)^3}.
		\end{aligned}
		\end{equation}
		The sum is non-decreasing. Indeed, we have
		\begin{equation*}
		1 \leq 1 +  \frac{1}{t+3} \leq  1 + \frac{1}{t+j}, \quad \forall j \in \{1,2, 3\},
		\end{equation*}
		and, hence, for all $j \in \{1,2,3\}$,
		\begin{eqnarray*}
		&\ls 1 +  \frac{1}{t+3}\rs^3 &\leq \prod_{j=1}^{3} \ls 1 + \frac{1}{t+j} \rs \\
		\Leftrightarrow & \ls\frac{t+4}{t+3}\rs^3 & \leq \prod_{j=1}^{3} \frac{t+j+1}{t+j}\\
		\Leftrightarrow &  \frac{\prod_{j=1}^{3} (t+j)}{(t+3)^3} & \leq \frac{\prod_{j=1}^{3} (t+1+j)}{(t+1+3)^3} .
		\end{eqnarray*}
		
		Thus, we have shown that the summands in the RHS of \eqref{eq:sum_A_t_bound} are growing, whence we get the next upper bound for the sum
		\begin{equation}
		\label{eq:ATat/At_i0}
		\begin{aligned} 
		\sum_{t=0}^{k} \frac{A_{k} \eta_{t}^{3}}{A_{t}} 
		&=3^{3} \prod_{j=1}^{3} \frac{1}{k+j} \sum_{t=0}^{k}  \frac{\prod_{j=1}^{3} (t+j)}{(t+3)^3}\\
		&\leq 3^3 \prod_{j=1}^{3} \frac{1}{k+j}\cdot (k+1) \cdot  \frac{ \prod_{j=1}^{3} (k+j)}{(k+3)^3}
		\leq\frac{(k+1)3^{3}}{(k+3)^{3}}
		\leq O\ls \frac{1}{k^{2}}\rs.
		\end{aligned}
		\end{equation}
		
		\end{proof}

	\section{Global Convergence with weaker assumptions on Self-Concordance} \label{sec:ap_global_nsc}
	We can prove global convergence to a neighborhood of the solution without using self-concordance directly, just by utilizing the following assumptions:
	\begin{mdframed}
		\begin{assumption}[Convexity] \label{as:conv}
			For function $f$ and any $x,h \in \bbE$ holds
			\begin{equation}
			f(x+h) \geq f(x) +\ip{\g(x)} {h} 
			\end{equation}
		\end{assumption} 
		\begin{assumption}[Hessian smoothness, in Hessian norms] \label{as:self_con_glob}
			Objective function $f$ satisfy 
			\begin{equation} \label{eq:self_con_global}
			f(x+h) - f(x) \leq \ip{\g(x)} {h} + \tfrac 1 2 \normM h x ^2 + \tfrac \Lalt 6 \normM h x ^3, \qquad \forall x,h \in \bbE.
			\end{equation}
		\end{assumption}
	\end{mdframed}
	
	\begin{lemma}[One step decrease globally] \label{le:one_step_global}
		Let \Cref{as:self_con_glob} hold and let $\Lalg \geq \Lalt$. Iterates of \ain{} \cref{eq:update} yield function value decrease,
		\begin{equation} \label{eq:one_step_global}
		f(x_{k+1})-f(x_k)
		\leq \begin{cases}
		- \tfrac 1 {2\sqrt \Lalg} \normM {\g(x_k)} {x_k} ^{*\tfrac 32} & \text{if } \normMd {\g(x_k)} {x_k} \geq \tfrac 4 \Lalg \\
		- \tfrac 1 4 \normsMd {\g(x_k)} {x_k} & \text{if } \normMd {\g(x_k)} {x_k} \leq \tfrac 4 \Lalg\\
		- \tfrac {\sqrt {c_1}} {2\sqrt \Lalg} \normM {\g(x_k)} {x_k} ^{*\tfrac 32} & \text{if } \normMd {\g(x_k)} {x_k} \geq \tfrac {4 c_1} \Lalg \text{ and } 0<c_1 \leq 1 
		\end{cases}.
		\end{equation}
	\end{lemma}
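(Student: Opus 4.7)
The plan is to plug the update $x_{k+1}-x_k = -\alpha_k [\nabla^2 f(x_k)]^{-1}\nabla f(x_k)$ into the smoothness assumption \eqref{eq:self_con_global}, reduce the resulting polynomial in $\alpha_k$ using the quadratic equation \eqref{eq:alpha_star} that defines $\alpha_k$, and finally case-split on $\normMd{\g(x_k)}{x_k}$ to extract the three stated bounds.

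First, I would set $g\eqdef \g(x_k)$ and $G\eqdef \Lalg\normMd g {x_k}$ for brevity, and observe the two elementary identities
\[
\ip{g}{x_{k+1}-x_k} = -\alpha_k \normsMd g {x_k}, \qquad
\normsM{x_{k+1}-x_k}{x_k} = \alpha_k^2 \normsMd g {x_k}.
\]
Applying \Cref{as:self_con_glob} with $h=x_{k+1}-x_k$ and using $\Lalt\le \Lalg$, I obtain
\[
f(x_{k+1})-f(x_k) \le \normsMd g {x_k}\!\left[-\alpha_k + \tfrac{\alpha_k^2}{2} + \tfrac{\Lalg \normMd g {x_k}}{6}\alpha_k^3\right].
\]
The defining identity \eqref{eq:alpha_star} reads $\tfrac{\Lalg \normMd g {x_k}}{2}\alpha_k^2 = 1-\alpha_k$, which lets me replace the cubic term by $\tfrac{\alpha_k}{3}(1-\alpha_k)$. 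The bracket then collapses to $-\tfrac{2\alpha_k}{3}+\tfrac{\alpha_k^2}{6} = \tfrac{\alpha_k(\alpha_k-4)}{6}$, and since $\alpha_k\le 1$ this is at most $-\tfrac{\alpha_k}{2}$. Hence the core one-step inequality is
\[
f(x_{k+1})-f(x_k) \le -\tfrac{\alpha_k}{2}\normsMd g {x_k}.
\]

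It remains to lower-bound $\alpha_k$ in each regime. Rewriting $\alpha_k = \tfrac{2}{1+\sqrt{1+2G}}$ makes the dependence on $G$ transparent. For the third (and most general) case, I want to show that $G \ge 4c_1$ implies $\alpha_k\ge \sqrt{c_1/G}$; this reduces to the elementary algebraic inequality $2\sqrt{G/c_1}\ge 1+\sqrt{1+2G}$, which after squaring becomes $\sqrt{G}(4-2c_1)\ge 4\sqrt{c_1}$, i.e.\ $G\ge \tfrac{4c_1}{(2-c_1)^2}$, and the latter is weaker than $G\ge 4c_1$ whenever $0<c_1\le 1$. Setting $c_1=1$ recovers the first bound $\alpha_k\ge 1/\sqrt G$ for $G\ge 4$, which translates via $G=\Lalg \normMd g {x_k}$ into the $\tfrac{1}{2\sqrt{\Lalg}}\normMd g {x_k}^{3/2}$ decrease. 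For the small-gradient case $G\le 4$, I would directly check $\alpha_k\ge 1/2$: this is equivalent to $\sqrt{1+2G}\le 3$, i.e.\ $G\le 4$, giving the $\tfrac14 \normsMd g {x_k}$ bound.

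The routine parts are the two substitutions and the final algebraic manipulations of $\alpha_k$. The only place where a little care is needed is verifying the squaring step in the third case (one must check that $2\sqrt{G/c_1}-1\ge 0$ so that squaring is monotone), but this is automatic once $G\ge 4c_1$ and $c_1\le 1$ are assumed. No additional assumptions are required: convexity (\Cref{as:conv}) is not used in this particular one-step bound, only the Hessian-norm smoothness \eqref{eq:self_con_global}.
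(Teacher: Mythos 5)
Your proposal is correct and follows essentially the same route as the paper's proof: substitute the damped Newton step into Assumption~\ref{as:self_con_glob}, reduce the bracket to show a decrease of $-\tfrac{\alpha_k}{2}\normsMd{\g(x_k)}{x_k}$, and then lower-bound $\alpha_k$ by $1/\sqrt{G}$, $1/2$, or $\sqrt{c_1/G}$ in the respective regimes. Your use of the defining quadratic $\tfrac{G}{2}\alpha_k^2=1-\alpha_k$ to collapse the cubic term is a slightly cleaner piece of algebra than the paper's direct manipulation of $\sqrt{1+2G}$, but it is a cosmetic difference, not a different argument.
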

	
	Decrease of \Cref{le:one_step_global} is tight up to a constant factor. As far as $\normMd {\g(x_k)} {x_k} \leq \tfrac {4 c_1} {\Lalg}$, we have functional value decrease as the first line of \eqref{eq:one_step_global}, which leads to $\cO \left( k^{-2} \right)$ convergence rate.
	
	We can obtain fast convergence to only neighborhood of solution, because close to the solution, gradient norm is sufficiently small $\normMd {\g(x_k)} {x_k} \leq \tfrac {4c_1} {\Lalg}$ and we get functional value decrease from second line of \eqref{eq:one_step_global}. However, this convergence is slower then $\cO \left( k^{-2} \right)$ for $\normMd {\g(x_k)} {x_k} \approx 0$ and it is insufficient for $\cO(k^{-2})$ rate.
	
	Note that third line generalizes first line; we use it to remove a constant factor gap from the neighborhood of fast local convergence.
	
	\begin{mdframed}
		\begin{theorem}[Global convergence] \label{th:global}
			Let Assumptions \ref{as:conv}, \ref{as:self_con_glob}, \ref{as:level_sets} hold, and constants $c_1, \Lalg$ satisfy $0<c_1\leq 1, \Lalg \geq \Lalt$. For $k$ until $\normMd {\g(x_k)} {x_k} \geq \tfrac {4c_1} {\Lalg}$,  \ain{} has global quadratic decrease, $f(x_k)-f^* \leq \mathcal O \left( \tfrac {\Lalg D^3}{k^2} \right)$. 
		\end{theorem}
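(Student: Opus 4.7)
The plan is to deduce the $\mathcal O(k^{-2})$ rate from the third case of Lemma \ref{le:one_step_global}, which is exactly the branch that applies on the regime stipulated by the theorem, namely whenever $\normMd{\g(x_k)}{x_k}\geq 4c_1/\Lalg$. In that regime Lemma \ref{le:one_step_global} yields the one-step descent
\[
f(x_{k+1})-f(x_k)\leq -\tfrac{\sqrt{c_1}}{2\sqrt{\Lalg}}\bigl(\normMd{\g(x_k)}{x_k}\bigr)^{3/2}.
\]
In particular, function values are monotonically decreasing, so every iterate stays in $\mathcal L(x_0)$ and $\normM{x_k-x_*}{x_k}\leq D$ by definition \eqref{D_lebeg}.

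Second, I would convert the right-hand side into a bound involving the suboptimality $\delta_k\eqdef f(x_k)-f(x_*)$ by using convexity (Assumption \ref{as:conv}) and Cauchy--Schwarz in the local Hessian norm:
\[
\delta_k\leq \ip{\g(x_k)}{x_k-x_*}\leq \normMd{\g(x_k)}{x_k}\cdot\normM{x_k-x_*}{x_k}\leq \normMd{\g(x_k)}{x_k}\cdot D.
\]
Substituting $\normMd{\g(x_k)}{x_k}\geq \delta_k/D$ into the previous display produces the scalar recurrence
\[
\delta_{k+1}\leq \delta_k-\gamma\,\delta_k^{3/2},\qquad \gamma\eqdef\tfrac{\sqrt{c_1}}{2\sqrt{\Lalg}\,D^{3/2}}.
\]

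To extract the explicit rate I would exploit the convexity of $t\mapsto t^{-1/2}$ on $(0,\infty)$. Using its tangent-line inequality together with the recurrence yields
\[
\frac{1}{\sqrt{\delta_{k+1}}}-\frac{1}{\sqrt{\delta_k}}\;\geq\;\frac{\delta_k-\delta_{k+1}}{2\,\delta_k^{3/2}}\;\geq\;\frac{\gamma}{2}.
\]
Telescoping from $0$ to $k$ and discarding the positive term $1/\sqrt{\delta_0}$ gives $1/\sqrt{\delta_k}\geq k\gamma/2$, hence $\delta_k\leq 16\,\Lalg D^3/(c_1 k^2)$, which matches the advertised $\mathcal O(\Lalg D^3/k^2)$ bound.

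The main point of care, rather than a deep obstacle, is tracking the regime of validity: the scalar recurrence is only justified as long as $\normMd{\g(x_k)}{x_k}\geq 4c_1/\Lalg$, which is precisely the stopping condition in the statement, and we also need $\delta_k>0$ throughout (automatic since $x_*$ is the unique minimizer by Assumption \ref{as:level_sets}). Notably, no lower envelope of the form $f(y)\geq Q_f(y;x)-\tfrac{\Lalg}{6}\|y-x\|_x^3$ is used, so the weaker Assumption \ref{as:self_con_glob} suffices; this is also why the $\eta_t$-averaging argument that drove Theorem \ref{thm:convergence} cannot be imitated directly here, as it would leave an uncontrolled $\eta_t^2\|x_*-x_t\|_{x_t}^2$ term that degrades the rate to $\mathcal O(1/k)$.
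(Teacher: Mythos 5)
Your argument is correct and follows the paper's own route: the third branch of Lemma \ref{le:one_step_global}, the convexity bound $f(x_k)-f^*\leq D\,\normMd{\g(x_k)}{x_k}$, and the resulting scalar recurrence $\delta_{k+1}\leq\delta_k-\gamma\,\delta_k^{3/2}$ are exactly the paper's steps (with your $\gamma$ equal to the paper's $\tau$). The only difference is at the end, where the paper invokes Proposition 1 of \citet{mishchenko2021regularized} to resolve the recurrence, whereas you solve it directly via the tangent-line inequality for $t\mapsto t^{-1/2}$ and telescoping, which is a valid and more self-contained way to obtain the same $\cO\bigl(\Lalg D^3/(c_1 k^2)\bigr)$ bound.
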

	\end{mdframed}
	Note that the global quadratic decrease of \ain{} is only to a neighborhood of the solution. However, once \ain{} gets to this neighborhood, it converges using (faster) local convergence rate (\Cref{th:local}).
	
	\subsection*{Proofs of global convergence without self-concordance}
	Throughout the rest of proofs, we simplify expressions by denoting terms 
    \begin{equation} \label{eq:hg_def}
    \gboxeq{\gk \eqdef \g(x_k)} \qquad \text{and} \qquad \gboxeq{\hk \eqdef \xdiff}, 
    \end{equation}
    for which holds
	\begin{align} \label{eq:hg_relation} 
	\hk = - \alpha_k \h(x_k)^{-1} \gk, \quad \gk = -\tfrac 1 {\alpha_k} \h(x_k) \hk \quad \text {and} \quad \normM {\hk } {x_k} = \sqrt \alpha \normMd {\gk} {x_k}m
	\end{align} 
	and also \gbox{$\G \eqdef\Lalg \normMd \gk {x_k}.$}
	
	\begin{proof}[Proof of \Cref{le:one_step_global}]
	
		We can use \Cref{as:self_con_glob} to obtain
		\begin{align}
		f(x_{k+1})-f(x_k)
		&= f(x_k+\hk) - f(x_k) \nonumber  \\
		&\stackrel {\eqref{eq:self_con_global}}\leq \ip{\g(x_k)} {\hk} + \frac 1 2 \normsM \hk {x_k} + \frac \Lalg 6 \normM \hk {x_k} ^3 \label{eq:self_con_first} \\
		&\stackrel {\eqref{eq:hg_relation}}= -\alpha_k \normsMd \gk {x_k} + \frac 1 2 \alpha_k^2 \normsMd \gk {x_k} + \frac \Lalg 6 \alpha_k^3 \normM \gk {x_k} ^{*3} \nonumber\\
		&= -\alpha_k \normsMd \gk {x_k} \left( 1- \frac 1 2 \alpha_k - \frac \Lalg 6 \alpha_k^2 \normMd \gk {x_k} \right). \label{eq:self_con_last}
		\end{align}
		
		We can simplify bracket in \cref{eq:self_con_last} as
		\begin{align*}
		1- \frac 1 2 \alpha_k - \frac \Lalg 6 \alpha_k^2 \normMd \gk {x_k}
		&= 1 - \frac 1 2 \frac {\sqrt{1+2\G} -1}{\G} - \frac {\G} 6 \left( \frac {\sqrt{1+2\G} -1}{\G} \right) ^2\\
		&= \frac {4\G + 1 - \sqrt{1+2\G}}{6\G}
		\stackrel{\eqref{eq:AG}}\geq \frac 1 2.
		\end{align*}
		Also, we can simplify the other term in \cref{eq:self_con_last} as
		\begin{align*}
		\alpha_k \normsMd \gk {x_k}
		&= \frac {\normMd \gk {x_k} } \Lalg \left( \sqrt{1+ 2\G} -1 \right)\frac{\sqrt{1+ 2\G} +1} {\sqrt{1+ 2\G} +1}
		= \frac {2 \normsMd \gk {x_k} }{\sqrt{1+ 2\G} +1}\\
		&
		\stackrel {\eqref{eq:jensen}} \geq  \frac {2 \normsMd \gk {x_k} } { \sqrt {\G} + 1 + \frac 1 {\sqrt 2} } 
		\geq \frac {2 \normsMd \gk {x_k} } { \sqrt {\G} + 2 }
		\geq \frac { \normsMd \gk {x_k} } { \max \left( \sqrt {\G}, 2 \right) },
		\end{align*}
		and plug these two result into \cref{eq:self_con_last} to obtain first two lines of \eqref{eq:one_step_global}. Third line can be obtained from the first line of \eqref{eq:one_step_global}. For $c_1$ so that $0<c_1\leq 1$ and $x_k$ satisfying $\tfrac {4c_1} \Lalg \leq \normMd {\g(x_k)} {x_k} <\tfrac 4 {\Lalg}$ holds
		
	    \[f(x_{k+1})-f(x_k) \leq -\tfrac 1 4 \normsMd {\g(x_k)} {x_k} \leq -\tfrac {\sqrt {c_1}} {2\sqrt \Lalg} \normM {\g(x_k)} {x_k} ^{*\tfrac 32}.\]
	\end{proof}
	
	\begin{proof}[Proof of \Cref{th:global}]
		As a consequence of convexity (\Cref{as:conv}) and bounded level sets (\Cref{as:level_sets}), we have 
		\begin{align} 
		f(x_{k})-f^{*} 
		&\leq \ip{\gk} {x_k-x_*}
		= \ip{\h(x_k)^{-1/2}\gk} {\h(x_k)^{1/2}(x_k-x_*)}
		\leq \normMd{\gk}{x_k} \normM{x_k-x_*}{x_k} \nonumber\\
		&\leq D\normMd{\gk}{x_k}.
		\end{align}
		Plugging it into \cref{eq:one_step_global} (which holds under \Cref{as:self_con_glob}) and noting that it yields
		\begin{equation}
		f(x_{k+1})-f(x_k)
		\leq 
		- \frac {\sqrt{c_1}} {2\sqrt \Lalg D^{3/2}} \left( f(x_{k})-f^{*} \right) ^{3/2}.
		\end{equation}
		Denote $\tau \eqdef \frac {\sqrt{c_1}} {2\sqrt \Lalg D^{3/2}}$ and $\err_k \eqdef \tau^2 (f(x_k)-f^*) \geq 0$.
		Terms $\err_k$ satisfy recurrence
		\begin{align*}
		\err_{k+1}
		&= \tau^2 (f(x_{k+1})-f^*)
		\stackrel{\eqref{eq:one_step_global}}\leq \tau^2 (f(x_k)-f^*) - \tau^3 (f(x_k)-f^*)^{3/2}
		= \err_k - \err_k^{3/2}.
		\end{align*}
		Because $\err_{k+1} \geq 0$, we have that $\err_k \leq 1$. 
		
		Proposition 1 of \citet{mishchenko2021regularized} shows that the sequence $\left\{ \err_k \right\} _{k=0} ^\infty, 0\leq \err_k \leq 1$ decreases as $\cO \left(\frac 1 {k^2} \right)$. Let $c_2$ be a constant satisfying $\err_k \leq \frac {c_2} {k^2}$ for all $k$ ($c_2 \approx 3$ seems to be sufficient). Finally, fol $k\geq \frac {\sqrt {c_2}}{\tau \sqrt{\varepsilon}} = 2\sqrt{ \frac {c_2\Lalg D^3}{c_1 \varepsilon} } = \cO \left( \sqrt{ \frac {\Lalg D^3}{\varepsilon} } \right)$ we have 
		\begin{equation*}
		f(x_k)-f^*
		= \frac {\err_k} {\tau^2}
		\leq \frac {c_2} {c_1 k^2 \tau^2}
		\leq \varepsilon.
		\end{equation*}
	\end{proof}
	
\end{document}